\documentclass[a4paper]{article}
\bibliographystyle{plain}
\usepackage[margin=25mm]{geometry}
\usepackage{amsmath}
\usepackage{amsthm}
\usepackage{amssymb}
\usepackage{amsfonts}
\usepackage{latexsym}
\usepackage{mathtools}
\usepackage{array}
\usepackage{bm}
\usepackage{comment} 
\usepackage{authblk}

\newtheorem{prop}{Proposition}[section]
\newtheorem{lem}[prop]{Lemma}
\newtheorem{thm}[prop]{Theorem}
\newtheorem{cor}[prop]{Corollary}

\theoremstyle{definition}
\newtheorem{dfn}[prop]{Definition}
\newtheorem{remark}[prop]{Remark}
\newtheorem{que}[prop]{Question}

\numberwithin{prop}{section}
\numberwithin{equation}{section}

\title{Completeness theorems for modal logic\\ in second-order arithmetic}
\author{Sho Shimomichi}
\author{Yuto Takeda}
\author{Keita Yokoyama}
\affil{Mathematical Institute, Tohoku University}

\usepackage[T1]{fontenc}

\begin{document}
\maketitle
%
%

%
%
%
%
\begin{abstract}
This paper investigates the logical strength of completeness theorems for modal propositional logic within second-order arithmetic. We demonstrate that the weak completeness theorem for modal propositional logic is provable in $\mathrm{RCA}_0$, and that, over $\mathrm{RCA}_0$, $\mathrm{ACA}_0$ is equivalent to the strong completeness theorem for modal propositional logic using canonical models. We also consider a simpler version of the strong completeness theorem without referring to canonical models and show that it is equivalent to $\mathrm{WKL}_0$ over $\mathrm{RCA}_0$.
\end{abstract}
\section{Introduction}
In this paper, we investigate the strength of completeness theorems for modal propositional logic with respect to Kripke semantics in second-order arithmetic.
There have been several attempts to analyze logic in second-order arithmetic. In \cite{MR2517689}, Simpson studied the relations of several well-known theorems of classical logic in second-order arithmetic and showed that, over $\mathrm{RCA}_0$, the G\"odel's completeness theorem is equivalent to $\mathrm{WKL}_0$.
Similar studies have been conducted on intuitionistic logic, and in \cite{MR2010178}, Yamazaki showed, over $\mathrm{RCA}_0$, the strong completeness theorem for intuitionistic predicate logic is equivalent to $\mathrm{ACA}_0$.
In this paper, we extend these studies to modal propositional logic.
We start with the weak completeness theorem for modal propositional logic which is provable in $\mathrm{RCA}_0$. Then we see that the strong completeness theorem by means of canonical models requires $\mathrm{ACA}_0$, while the strong completeness theorem itself is equivalent to $\mathrm{WKL}_0$ over $\mathrm{RCA}_0$.

In Section 2, we formalize modal logic in second-order arithmetic.
In Section 3, we show the weak completeness theorems for all modal logics built from the axiom $K$ using modal formulas $T$, $B$, $4$, and $D$ in $\mathrm{RCA}_0$. We formalize the proof of weak completeness by Moss \cite{MR2349876} within $\mathrm{RCA}_0$. In Section 4, we show the weak completeness theorem for $\mathrm{GL}$ in $\mathrm{RCA}_0$. In Section 5, we prove that $\mathrm{ACA}_0$ is equivalent to the strong completeness theorem using canonical models over $\mathrm{RCA}_0$. The idea of the equivalence proof depends on Yamazaki \cite{MR2010178}. In Section 6, we consider a simpler version of the strong completeness theorem. 

\renewcommand{\thefootnote}{\fnsymbol{footnote}}
\section{Modal logic in second-order arithmetic}
In this section, we set up the basic notions of modal logic within $\mathrm{RCA}_0$. As usual, we identify finite sequences of symbols of language for modal logic with natural numbers. 

Let $\mathrm{Prop}=\{p,q,r,\dots\}$ be an infinite set of propositional variables, and let $\mathrm{C}=\{\alpha_0, \alpha_1, \dots\}$ be an infinite set of propositional constants.%
\footnote{We distinguish propositional variables (may be seen as place holders) which allow uniform substitution and atomic constants. This helps finite axiomatization under the uniform substitution rule, but it does not change provability.}
Thus in $\mathrm{RCA}_0$, the set of all atomic formulas, $\mathrm{Atm}=\mathrm{Prop}\cup\mathrm{C}\cup\{\bot\}$, and the set of all modal formulas, $\mathrm{Fml}$, exist as the sets of natural numbers. 

Fix $p, q, r\in\mathrm{Prop}$. Let $\mathrm{Axm}$, the axioms of normal modal logic, be the set of the following formulas:
\begin{align*}
&p\rightarrow(q\rightarrow p),&& (p\rightarrow(q\rightarrow r))\rightarrow((p\rightarrow q)\rightarrow(p\rightarrow r)),\\
&(\neg p\rightarrow\neg q)\rightarrow(q\rightarrow p),&& \Box(p\rightarrow q)\rightarrow(\Box p\rightarrow\Box q).
\end{align*}

As in standard studies of modal logic, we also deal with the following formulas.
\begin{align*}
&T\equiv\Box p\rightarrow p,&& B\equiv p\rightarrow\Box\Diamond p,&& D\equiv \Box p\rightarrow\Diamond p, \quad\quad  4\equiv\Box p\rightarrow\Box\Box p,&\\
&5\equiv\Diamond p\rightarrow\Box\Diamond p,&& .2\equiv\Diamond\Box p\rightarrow\Box\Diamond p,&& L\equiv\Box(\Box p\rightarrow p)\rightarrow\Box p.&
\end{align*}

In dealing with modal logic, we introduce the uniform substitution rule.
Let $\mathcal{F}$ be the set of all functions from $\mathrm{Prop}$ to $\mathrm{Fml}$ with finite domains.

\begin{prop}[$\mathrm{RCA}_0$]\label{uniform}
There exisits a function $f:\mathcal{F}\times\mathrm{Fml}\rightarrow\mathrm{Fml}$ such that for all finite function $\sigma:\subseteq\mathrm{Prop}\rightarrow\mathrm{Fml}$, $f(\sigma, \cdot)$ is the \emph{uniform substitution for $\sigma$}, i.e.,
\begin{itemize}
    \item $f(\sigma,p)=\sigma(p)$ if $p\in\mathrm{Prop}$ and $p\in\mathrm{dom}(\sigma)$,
    \item $f(\sigma,\alpha)=\alpha$ if $\alpha\in\mathrm{C}$,
    \item $f(\sigma,\bot)=\bot$,
    \item $f(\sigma,\varphi\rightarrow\psi)=f(\sigma,\varphi)\rightarrow f(\sigma,\psi)$,
    \item $f(\sigma,\Box\varphi)=\Box f(\sigma,\varphi)$.
\end{itemize}
\end{prop}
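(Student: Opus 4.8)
The plan is to realize $f$ by primitive recursion on the syntactic structure of its second argument, and then to check that the resulting graph is $\Delta^0_1$, so that $f$ exists as a set by $\Delta^0_1$-comprehension. Since modal formulas are coded as natural numbers, the relevant syntactic apparatus — the length of a formula, the subformula relation, the outermost connective, and the passage to immediate subformulas — is primitive recursive, hence $\Delta^0_1$-definable and provably total in $\mathrm{RCA}_0$. Before starting I would fix the one case left implicit by the displayed clauses: for $p\in\mathrm{Prop}\setminus\mathrm{dom}(\sigma)$ we set $f(\sigma,p)=p$, so that the clauses jointly determine a total recursion.

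First I would make the recursion explicit via the notion of a \emph{substitution computation} for a pair $(\sigma,\varphi)$: a finite function $h$, coded by a single natural number, whose domain is exactly the set of subformulas of $\varphi$ and which obeys the clauses locally, i.e. $h(\psi)=\sigma(\psi)$ when $\psi\in\mathrm{Prop}\cap\mathrm{dom}(\sigma)$, $h(\psi)=\psi$ when $\psi\in(\mathrm{Prop}\setminus\mathrm{dom}(\sigma))\cup\mathrm{C}\cup\{\bot\}$, $h(\chi\rightarrow\theta)=h(\chi)\rightarrow h(\theta)$, and $h(\Box\chi)=\Box\,h(\chi)$. The number of subformulas of $\varphi$ and the sizes of the prescribed values are bounded in terms of $\varphi$ and $\sigma$, so such an $h$ is genuinely codeable. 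I then define the graph by $G(\sigma,\varphi,y)\equiv\exists h\,[\,h\text{ is a substitution computation for }(\sigma,\varphi)\wedge h(\varphi)=y\,]$, which is $\Sigma^0_1$.

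The two substantive steps are existence and uniqueness of computations. For uniqueness, a straightforward induction on length shows that any two substitution computations agree on every common subformula, and in particular on $\varphi$ itself; since this is a $\Pi^0_1$ statement, the induction is available in $\mathrm{RCA}_0$ (where $\Pi^0_1$-induction follows from $\Sigma^0_1$-induction). For existence, I would argue by induction on the length of $\varphi$ that a computation for $(\sigma,\varphi)$ exists: in the atomic cases the one-point function works, and for $\varphi=\chi\rightarrow\theta$ and $\varphi=\Box\chi$ one takes the union of computations for the strictly shorter immediate subformulas and adjoins the value prescribed by the relevant clause. The point is that the inductive predicate ``a computation for $(\sigma,\varphi)$ exists'' is $\Sigma^0_1$, so this is precisely an instance of $\Sigma^0_1$-induction.

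Once existence and uniqueness are in hand, the graph is also $\Pi^0_1$: $G(\sigma,\varphi,y)$ is equivalent, modulo the provable existence of a witness, to $\forall y'\,(G(\sigma,\varphi,y')\rightarrow y=y')$. Hence $G$ is $\Delta^0_1$, and $\Delta^0_1$-comprehension delivers $f$ as a set satisfying the five clauses by construction. The main obstacle is the existence step: one must carry out the recursion while keeping the inductive statement $\Sigma^0_1$, and verify that in the implication case the union of the two sub-computations is a genuine finite function — which is exactly where uniqueness is used, guaranteeing that the sub-computations agree wherever their domains overlap.
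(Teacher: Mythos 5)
Your proposal is correct and is essentially the paper's argument in expanded form: the paper simply defines $f$ by recursion on the structure of the formula (with the same convention $f(\sigma,p)=p$ for $p\notin\mathrm{dom}(\sigma)$) and asserts this is possible by $\mathrm{I}\mathrm{\Sigma}^0_1$, which is precisely the standard witness-of-computation construction you spell out ($\mathrm{\Sigma}^0_1$ graph via finite substitution computations, existence and uniqueness by induction, then $\mathrm{\Delta}^0_1$-comprehension). Your version makes explicit the details the paper leaves implicit, including the point that the union of sub-computations is coherent by uniqueness.
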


\begin{proof}
We define $f:\mathcal{F}\times\mathrm{Fml}\rightarrow\mathrm{Fml}$ recursively: for all $p\in\mathrm{Prop}\cup\mathrm{C}$ and $\sigma\in\mathcal{F}$, $f(\sigma, p)=\sigma(p)$ if $p\in\mathrm{Prop}\land p\in\mathrm{dom}(\sigma)$, otherwise $f(\sigma, p)=p$. This is possible using $\mathrm{I}\mathrm{\Sigma}^0_1$.
\end{proof}

Henceforth, for all $\varphi\in\mathrm{Fml}$ and $\sigma\in\mathcal{F}$, we denote $f(\sigma,\varphi)$ as $\overline{\sigma}(\varphi)$. Note that for all $\sigma\in\mathrm{Seq}$, we denote the length of $\sigma$ as $lh(\sigma)$.
We now formalize the notion of provability.
\begin{dfn}[$\mathrm{RCA}_0$]
Let $\mathrm{\mathrm{\Gamma}}\subset\mathrm{Fml}$. We will define the following predicates:
\begin{equation*}
\begin{split}
\mathrm{Prf}_{\mathbf{K}\mathrm{\mathrm{\Sigma}}}(\mathrm{\mathrm{\Gamma}}, p)\equiv p&\in\mathrm{Seq}\land\forall k(k<lh(p)\rightarrow p(k)\in\mathrm{Fml})\\&\land\forall k\Bigl(k<lh(p)\rightarrow\Bigl(p(k)\in\mathrm{\mathrm{\Gamma}}\lor p(k)\in\mathrm{\mathrm{\Sigma}}\lor p(k)\in\mathrm{Axm}\\&\lor(\exists i<k\exists j<k)(p(i)=p(j)\rightarrow p(k))\lor(\exists i<k)(p(k)=\Box p(i))\\&\lor(\exists i<k)(\exists\sigma:\{q<p(i)\mid q\in\mathrm{Prop}\}\rightarrow\{\varphi<p(k)\mid\varphi\in\mathrm{Fml}\})\\&(p(k)=\overline{\sigma}(p(i)))\Bigr)\Bigr),
\end{split}
\end{equation*}
\begin{equation*}
\begin{split}
\mathrm{Pbl}_{\mathbf{K}\mathrm{\mathrm{\Sigma}}}(\mathrm{\mathrm{\Gamma}}, \varphi)\equiv&\exists\psi_1,\dots,\exists\psi_n\in\mathrm{\mathrm{\Gamma}}\exists p\\&\bigl(\mathrm{Prf}_{\mathbf{K}\mathrm{\mathrm{\Sigma}}}(\emptyset, p)\land(\exists i<lh(p))(p(i)=(\psi_1\land\dots\land\psi_n\rightarrow\varphi))\bigr).
\end{split}
\end{equation*}
\end{dfn}

Let $\mathrm{\mathrm{\Gamma}}\subset\mathrm{Fml}$. Then,
\begin{itemize}
    \item $\mathrm{\mathrm{\Gamma}}$ is \emph{$\mathbf{K}\mathrm{\mathrm{\Sigma}}$-consistent} if $\neg\mathrm{Pbl}_{\mathbf{K}\mathrm{\mathrm{\Sigma}}}(\mathrm{\mathrm{\Gamma}}, \bot)$ holds,
    \item $\mathrm{\mathrm{\Gamma}}$ is \emph{closed under deduction} if $\forall\varphi\in\mathrm{Fml}(\mathrm{Pbl}_{\mathbf{K}\mathrm{\mathrm{\Sigma}}}(\mathrm{\mathrm{\Gamma}}, \varphi)\rightarrow\varphi\in\mathrm{\Gamma})$ holds,
    \item $\mathrm{\mathrm{\Gamma}}$ is \emph{complete} if $\forall\varphi\in\mathrm{Fml}(\varphi\in \mathrm{\mathrm{\Gamma}}\lor\varphi\not\in\mathrm{\mathrm{\Gamma}})$ holds.
    \item $\mathrm{\mathrm{\Gamma}}$ is \emph{maximally $\mathbf{K}\mathrm{\mathrm{\Sigma}}$-consistent} if $\mathrm{\mathrm{\Gamma}}$ is $\mathbf{K}\mathrm{\mathrm{\Sigma}}$-consistent, closed under deduction, and complete.
\end{itemize}

We next formalize the Kripke semantics.
\begin{dfn}[$\mathrm{RCA}_0$]
A \emph{Kripke model} is a tuple $M=(W, R, V)$ satisfying the following conditions:
\begin{enumerate}
    \item $W\subseteq\mathbb{N}$ is a non-empty set,
    \item $R$ is a binary relation on $W$, i.e., $R\subseteq W\times W$,
    \item 
$V : W\times\mathrm{Fml} \rightarrow\{0, 1\}$,
    \item $V(w, \bot)=0$ for any $w\in W$, 
    \item $V(w, \varphi\rightarrow\psi)=1-V(w, \varphi)(1-V(w, \psi))$ for any $w\in W$ and any $\varphi, \psi\in\mathrm{Fml}$, 
    \item $V(w, \Box\varphi)=1\iff\forall v\in W(wRv\rightarrow V(v, \varphi)=1)$ for any $w\in W$ and any $\varphi\in\mathrm{Fml}$.
\end{enumerate}
The pair $(W,R)$ is called a \emph{frame} and the function $V$ is called a \emph{valuation} with $(W, R)$.
\end{dfn}

We also use the following notations.
Let $F$ be a frame, $M$ be a model, $w\in W$, and $\varphi\in\mathrm{Fml}$, we define
\begin{align*}
&M, w\Vdash\varphi\equiv V(w, \varphi)=1, \quad \quad M\Vdash\varphi\equiv(\forall w\in W)(V(w, \varphi)=1),\\
&F\Vdash\varphi\equiv(\forall V\text{: valuation with $F$})(\forall w\in W)(V(w, \varphi)=1).
\end{align*}
\begin{remark}
In the above definition, the valuation $V$ should cover the truth values of all formulas. Within $\mathrm{RCA}_{0}$, one cannot extend the valuation for atomic formulas to the full valuation.
\end{remark}

\begin{dfn}[$\mathrm{RCA}_0$]
Let $F=(W, R)$ be a frame.
\begin{itemize}
    \item $F$ is \emph{appropriate to $T$} if $R$ is reflexive,
    \item $F$ is \emph{appropriate to $B$} if $R$ is symmetric,
    \item $F$ is \emph{appropriate to $4$} if $R$ is transitive,
    \item $F$ is \emph{appropriate to $5$} if $R$ is Euclidean,
    \item $F$ is \emph{appropriate to $D$} if $R$ is serial,
    \item $F$ is \emph{appropriate to $.2$} if $R$ is directed,
    \item $F$ is \emph{appropriate to $L$} if $R$ is transitive and $W$ has no infinite assending sequences by $R$.
\end{itemize}

In general, $F$ is \emph{appropriate to $\mathrm{\Sigma}\subseteq\{T, B, 4, 5, D, .2, L\}$} if for all $\varphi\in\mathrm{\Sigma}$, $F$ is appropriate to $\varphi$.
\end{dfn}
Given a frame $F$, a set of formulas $\mathrm{\Gamma}$, $\mathrm{\Sigma}\subseteq\{T, B, 4, 5, D, .2, L\}$, and $\varphi\in\mathrm{Fml}$, we let
\begin{align*}
F\Vdash\mathrm{\Gamma}&\equiv(\forall\psi\in\mathrm{\Gamma})(F\Vdash\psi),\\
\mathrm{\Gamma}\Vdash_{\mathbb{F}_{\mathrm{\Sigma}}}\varphi&\equiv(\forall F\text{: frame which is appropriate to $\mathrm{\Sigma}$})(F\Vdash\mathrm{\Gamma}\rightarrow F\Vdash\varphi).
\end{align*}

Now we are ready to state the soundness theorem.
\begin{thm}[Soundness Theorem, $\mathrm{RCA}_0$]\label{sound}
Let $\mathrm{\Sigma}\subseteq\{T, B, 4, 5, D, .2, L\}$ and $\varphi\in\mathrm{Fml}$. If $\mathrm{Pbl}_{\mathbf{K}\mathrm{\mathrm{\Sigma}}}(\emptyset, \varphi)$ holds, then $\Vdash_{\mathbb{F}_{\mathrm{\Sigma}}}\varphi$ holds.
\end{thm}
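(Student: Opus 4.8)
The plan is to fix an arbitrary frame $F=(W,R)$ appropriate to $\Sigma$ and an arbitrary valuation $V$ with $F$, and to show $(\forall w\in W)(V(w,\varphi)=1)$; since $F$ and $V$ are arbitrary, this yields $\Vdash_{\mathbb{F}_\Sigma}\varphi$ by universal introduction. Unwinding $\mathrm{Pbl}_{\mathbf{K}\Sigma}(\emptyset,\varphi)$ with $\Gamma=\emptyset$ (so no premises $\psi_i$ are available and the displayed implication collapses to $\varphi$ itself), I obtain a single $p$ with $\mathrm{Prf}_{\mathbf{K}\Sigma}(\emptyset,p)$ and $p(k_0)=\varphi$ for some $k_0<lh(p)$. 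The soundness argument is then an induction along $p$.

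The naive induction hypothesis ``$F\Vdash p(k)$'' is $\Pi^1_1$, since it quantifies over all valuations, and so is out of reach of the induction available in $\mathrm{RCA}_0$; the whole difficulty is the uniform substitution rule, because every other rule preserves truth under the single fixed valuation $V$. The key idea is to strengthen the induction hypothesis so that it both stays arithmetic and absorbs substitution: I induct on $k$ with $J(k)\equiv(\forall\rho\in\mathcal{F})(\forall w\in W)(V(w,\overline{\rho}(p(k)))=1)$. Because $\rho$ and $w$ are number quantifiers and $V(w,\overline{\rho}(p(k)))=1$ is $\Delta^0_0$ in $V$ (using the total substitution function of Proposition \ref{uniform}), $J(k)$ is $\Pi^0_1$, so strong induction on $k$ is available from $\mathrm{I}\Sigma^0_1$.

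For the induction step I use that substitution commutes with the connectives, which is immediate from the clauses defining $\overline{\cdot}$: for \emph{modus ponens} ($p(i)=(p(j)\to p(k))$) I have $\overline{\rho}(p(i))=\overline{\rho}(p(j))\to\overline{\rho}(p(k))$ and combine $J(i)$ and $J(j)$ through the valuation clause for $\to$; for \emph{necessitation} ($p(k)=\Box p(i)$) I have $\overline{\rho}(p(k))=\Box\overline{\rho}(p(i))$ and use $J(i)$ with the clause for $\Box$; for the \emph{substitution} rule ($p(k)=\overline{\tau}(p(i))$) I use the composition identity $\overline{\rho}(\overline{\tau}(p(i)))=\overline{\rho\circ\tau}(p(i))$, a routine structural fact about $\overline{\cdot}$, and apply $J(i)$ at $\rho\circ\tau\in\mathcal{F}$. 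The $\Gamma$-premise case is vacuous. The only case that is not purely formal is an axiom line $p(k)\in\mathrm{Axm}\cup\Sigma$: here I first prove, as a separate lemma, that every such axiom is valid under \emph{all} valuations with $F$ (using the relevant frame condition), and then deduce $V(w,\overline{\rho}(p(k)))=1$ by instantiating that lemma at the precomposed valuation $V_\rho$ defined by $V_\rho(w,\psi)=V(w,\overline{\rho}(\psi))$, which exists by $\Delta^0_1$ comprehension and is readily checked to satisfy the valuation clauses. Note this lemma is proved by fixing an arbitrary valuation and arguing directly, not by induction on proofs, so no $\Pi^1_1$ induction is ever invoked. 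Finally, $J(k_0)$ at $\rho=\emptyset$ gives $(\forall w\in W)(V(w,\varphi)=1)$.

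The main obstacle is the axiom-validity lemma for $L$, i.e.\ validity of $\Box(\Box p\to p)\to\Box p$ over transitive frames without infinite $R$-ascending sequences, together with keeping its proof inside $\mathrm{RCA}_0$. I would argue by contradiction: assuming $V'(w,\Box(\Box p\to p))=1$ and $V'(w,\Box p)=0$, I build an $R$-ascending sequence $w R v_0 R v_1 R\cdots$ with $V'(v_n,p)=0$ for all $n$ by primitive recursion, taking $v_{n+1}$ to be the least $v$ with $v_nRv\wedge V'(v,p)=0$; such $v$ exists because $V'(v_n,\Box p\to p)=1$ forces $V'(v_n,\Box p)=0$, and $wRv_{n+1}$ holds by transitivity. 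This sequence, which $\mathrm{RCA}_0$ constructs since the step is $\Delta^0_1$ in $V'$ and $R$, contradicts appropriateness to $L$. The remaining axioms ($T,B,4,5,D,.2$, together with the propositional and $K$ axioms) are handled by direct and routine arguments from reflexivity, symmetry, transitivity, the Euclidean property, seriality, and directedness, respectively.
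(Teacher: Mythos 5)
Your proposal is correct and follows essentially the same route as the paper's proof: the same strengthened, arithmetic ($\Pi^0_1$) induction hypothesis quantifying over all substitutions $\sigma$ along the proof sequence, the same composition-of-substitutions trick for the uniform substitution rule, and the same primitive-recursive construction of an infinite $R$-ascending sequence to handle the axiom $L$. The only cosmetic difference is the axiom case, where the paper verifies $V(w,\overline{\sigma}(\varphi))=1$ directly for the fixed valuation, treating $\sigma(p)$ as an arbitrary formula, while you detour through validity under all valuations and the precomposed valuation $V_\rho$; both are unproblematic in $\mathrm{RCA}_0$.
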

To prove the soundness theorem within $\mathrm{RCA}_{0}$, we carefully formalize the usual proof so that the induction fits in $\mathrm{I}\mathrm{\Sigma}^0_1$.

\begin{proof}
Let $p$ be a proof of $\varphi$, that is, $\mathrm{Prf}_{\mathbf{K}\mathrm{\Sigma}}(\emptyset, p)\land(\exists k<lh(p))(p(k)=\varphi)$ holds. Fix a frame $F$ which is appropriate to $\mathrm{\Sigma}$ and a valuation $V$ with $F$. Let $M=(F, V)$. We want to show that $(\forall k<lh(p))(\forall w\in W)(V(w, p(k))=1)$. However, this way is difficult due to the complexity of induction for the uniform substitution rule. Thus we aim to show the following by $\mathrm{I}\mathrm{\Sigma}^0_1$:
\begin{equation*}
(\forall k<lh(p))(\forall w\in W)(\forall\sigma:\subseteq\mathrm{Prop}\rightarrow\mathrm{Fml})(V(w, \overline{\sigma}(p(k)))=1).
\end{equation*}

Then it is enough to show that
\begin{enumerate}
    \item $\mathrm{\Sigma}$ is satisfiable for $M$ \\(i.e., $(\forall\varphi\in\mathrm{\Sigma})(\forall w\in W)(\forall\sigma:\subseteq\mathrm{Prop}\rightarrow\mathrm{Fml})(V(w, \overline{\sigma}(\varphi))=1)$),
    \item All axioms are satisfiable for $M$ \\(i.e., $(\forall\varphi\in\mathrm{Axm})(\forall w\in W)(\forall\sigma:\subseteq\mathrm{Prop}\rightarrow\mathrm{Fml})(V(w, \overline{\sigma}(\varphi))=1)$), and
    \item All inference rules are satisfiable for $M$.
\end{enumerate}

First, we show (1). We show it for $T\in\mathrm{\Sigma}$. Thus $F$ is appropriate to $T$. Let $w\in W$ and $\sigma:\subseteq\mathrm{Prop}\rightarrow\mathrm{Fml}$. Assume that $M, w\Vdash\Box\sigma(p)$. Since $F$ is reflexive, $wRw$, so $M, w\Vdash\sigma(p)$. Thus $M, w\Vdash\Box\sigma(p)\rightarrow\sigma(p)$.

We show it holds for $B\in\mathrm{\Sigma}$. Thus $F$ is appropriate to $B$. Let $w\in W$ and $\sigma:\subseteq\mathrm{Prop}\rightarrow\mathrm{Fml}$. Assume that $M, w\Vdash\sigma(p)$. Let $v\in W$ be $wRv$. Since $F$ is symmetric, $vRw$, so $M, v\Vdash\Diamond\sigma(p)$. Thus $M, w\Vdash\Box\Diamond\sigma(p)$, so $M, w\Vdash\sigma(p)\rightarrow\Box\Diamond\sigma(p)$ holds.

We show it holds for $4\in\mathrm{\Sigma}$. Thus $F$ is appropriate to $4$. Let $w\in W$ and $\sigma:\subseteq\mathrm{Prop}\rightarrow\mathrm{Fml}$. Assume that $M, w\Vdash\Box\sigma(p)$. Take $v, u\in W$ such that $wRv$ and $vRu$. Since $F$ is transitive, $wRu$, then $M, u\Vdash\sigma(p)$. Thus $M, v\Vdash\Box\sigma(p)$, so $M, w\Vdash\Box\Box\sigma(p)$ holds, so $M, w\Vdash\Box\sigma(p)\rightarrow\Box\Box\sigma(p)$ holds.

We show it holds for $5\in\mathrm{\Sigma}$. Thus $F$ is appropriate to $5$. Let $w\in W$ and $\sigma:\subseteq\mathrm{Prop}\rightarrow\mathrm{Fml}$. Assume that $M, w\Vdash\Diamond\sigma(p)$. Fix $u\in W$ with $wRu$. There exists $v\in W$ such that $wRv$ and $M, v\Vdash\sigma(p)$. Since $F$ is Euculidean, $uRv$, so $M, u\Vdash\Diamond\sigma(p)$. Thus $M, w\Vdash\Box\Diamond\sigma(p)$, so $M, w\Vdash\Diamond\sigma(p)\rightarrow\Box\Diamond\sigma(p)$ holds.

We show it holds for $D\in\mathrm{\Sigma}$. Thus $F$ is appropriate to $D$. Let $w\in W$ and $\sigma:\subseteq\mathrm{Prop}\rightarrow\mathrm{Fml}$. Assume that $M, w\Vdash\Box\sigma(p)$. Since $F$ is serial, there exists $u\in W$ such that $wRu$, so $M, u\Vdash\sigma(p)$. Thus $M, w\Vdash\Diamond\sigma(p)$ holds.

We show it holds for $.2\in\mathrm{\Sigma}$. Thus $F$ is appropriate to $.2$. Let $w\in W$ and $\sigma:\subseteq\mathrm{Prop}\rightarrow\mathrm{Fml}$. Assume that $M, w\Vdash\Diamond\Box\sigma(p)$. Fix $u\in W$ with $wRu$. There exists $v\in W$ such that $wRv$ and $M, v\Vdash\Box\sigma(p)$. Since $F$ is directed, there exists $s\in W$ such that $uRs$ and $vRs$, so $M, s\Vdash\sigma(p)$. Thus $M, u\Vdash\Diamond\sigma(p)$, so $M, w\Vdash\Box\Diamond\sigma(p)$. Thus $M, w\Vdash\Diamond\Box\sigma(p)\rightarrow\Box\Diamond\sigma(p)$ holds.

We show it holds for $L\in\mathrm{\Sigma}$. Thus $F$ is appropriate to $L$. Let $w\in W$ and $\sigma:\subseteq\mathrm{Prop}\rightarrow\mathrm{Fml}$. Assume that $M, w\Vdash\Box(\Box\sigma(p)\rightarrow\sigma(p))$ and $M, w\not\Vdash\Box\sigma(p)$. Then we can define $w_0, w_1, \dots$ by the following construction recursively: by $M, w\not\Vdash\Box\sigma(p)$, we can take $w_0\in W$ such that $wRw_0$ and $M, w_0\not\Vdash\sigma(p)$. Assume that $w_n$ is defined, that is, $M, w_n\not\Vdash\sigma(p)$ and for all $i<j\leq n$, $w_iRw_j$, then by $R$ is transitive, $wRw_n$. Thus $M, w_n\Vdash\Box\sigma(p)\rightarrow\sigma(p)$, and since $M, w_n\not\Vdash\sigma(p)$, $M, w_n\not\Vdash\Box\sigma(p)$. Thus we can take $w_{n+1}$ such that $w_nRw_{n+1}$ and $M, w_{n+1}\not\Vdash\sigma(p)$. This constraction is possible using $\mathrm{I}\mathrm{\Sigma}^0_1$. Then we heve an infinite assending sequence by $R$. It is a contradiction. Therefore $M, w\Vdash\Box(\Box \sigma(p)\rightarrow\sigma(p))\rightarrow\Box\sigma(p)$.

We show (2). Clearly, the tautologies of modal logic are satisfiable for $M$. We show that $K$ is satisfiable for $M$. Let $w\in W$ and $\sigma:\subseteq\mathrm{Prop}\rightarrow\mathrm{Fml}$. Suppose that $M, w\Vdash\Box(\sigma(p)\rightarrow\sigma(q))$ and $M, w\Vdash\Box\sigma(p)$ holds. Fix $v\in W$ with $wRv$, then $M, v\Vdash\sigma(p)\rightarrow\sigma(q)$ and $M, v\Vdash\sigma(p)$ holds. Thus $M, v\Vdash\sigma(q)$, so $M, w\Vdash\Box\sigma(q)$ holds. Thus $M, w\Vdash\Box(\sigma(p)\rightarrow\sigma(q))\rightarrow(\Box\sigma(p)\rightarrow\Box\sigma(q))$.

We show (3). First, we show that Modus Ponens is satisfiable for $M$. Let $\varphi_0, \psi_0\in\mathrm{Fml}$, Assume that for all $w, v\in W$, and $\sigma, \tau:\subseteq\mathrm{Prop}\rightarrow\mathrm{Fml}$, $M, w\Vdash\overline{\sigma}(\varphi_0)$ and $M, v\Vdash\overline{\tau}(\varphi_0\rightarrow\psi_0)$ holds. Let $w_0\in W$, and $\sigma_0:\subseteq\mathrm{Prop}\rightarrow\mathrm{Fml}$. Then $M, w_0\Vdash\overline{\sigma_0}(\varphi_0)$ and $M, w_0\Vdash\overline{\sigma_0}(\varphi_0\rightarrow\psi_0)$ holds, thus $M, w_0\Vdash\overline{\sigma_0}(\varphi_0)$ and $M, w_0\Vdash\overline{\sigma_0}(\varphi_0)\rightarrow\overline{\sigma_0}(\psi_0)$ holds, thus $M, w_0\Vdash\overline{\sigma_0}(\psi_0)$ holds. 

Second, we show that Necessitation is satisfiable for $M$. Let $\varphi_0\in\mathrm{Fml}$. Assume that for all $w\in W$, and $\sigma:\subseteq\mathrm{Prop}\rightarrow\mathrm{Fml}$, $M, w\Vdash\overline{\sigma}(\varphi_0)$ holds. Let $w_0\in W$, and $\sigma_0:\subseteq\mathrm{Prop}\rightarrow\mathrm{Fml}$. Then $M, w_0\Vdash\overline{\sigma_0}(\varphi_0)$ holds, thus $M, w_0\Vdash\Box\overline{\sigma_0}(\varphi_0)$ holds, so $M, w_0\Vdash\overline{\sigma_0}(\Box\varphi_0)$ holds. 

Third, we show that uniform substitution is satisfiable for $M$. Let $\varphi_0\in\mathrm{Fml}$. Assume that for all $w\in W$, and $\sigma:\subseteq\mathrm{Prop}\rightarrow\mathrm{Fml}$, $M, w\Vdash\overline{\sigma}(\varphi_0)$. Let $w_0\in W$, and $\sigma_0:\subseteq\mathrm{Prop}\rightarrow\mathrm{Fml}$. Let $\sigma_1:\subseteq\mathrm{Prop}\rightarrow\mathrm{Fml}$. Then $M, w_0\Vdash\overline{\sigma_0}(\varphi_0)$ holds. Moreover, $M, w_0\Vdash\overline{\sigma_1}(\varphi_0)$ holds. Then by the definition of uniform substitution, $M, w_0\Vdash\overline{\sigma_0}(\overline{\sigma_1}(\varphi_0))$.  
\end{proof}

\begin{cor}[$\mathrm{RCA}_0$]
For all $\mathrm{\Sigma}\subseteq\{T, B, 4, 5, D, .2\}$, $\mathbf{K}\mathrm{\mathrm{\Sigma}}$ is consistent, i.e., $\neg\mathrm{Pbl}_{\mathbf{K}\mathrm{\mathrm{\Sigma}}}(\emptyset, \bot)$ holds. Moreover, $\mathbf{GL}$ is consistent, \\i.e., $\neg\mathrm{Pbl}_{\mathbf{GL}}(\emptyset, \bot)$ holds.
\end{cor}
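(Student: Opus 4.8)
The plan is to derive both consistency statements directly from the Soundness Theorem (Theorem \ref{sound}): if $\bot$ were provable, soundness would force $\bot$ to hold on some concrete frame, but clause (4) of the definition of a Kripke model forbids $\bot$ from being forced anywhere. So the only task is to exhibit, for each relevant $\mathrm{\Sigma}$, a single frame that is appropriate to $\mathrm{\Sigma}$ and on which a valuation exists in $\mathrm{RCA}_0$; because the frames I have in mind are finite, this is unproblematic.

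First I would treat $\mathrm{\Sigma}\subseteq\{T, B, 4, 5, D, .2\}$. Take the single reflexive point, i.e. the frame $F=(W, R)$ with $W=\{0\}$ and $R=\{(0,0)\}$. A direct check shows that $R$ is reflexive, symmetric, transitive, Euclidean, serial, and directed --- every instance of each condition is witnessed by the single pair $(0,0)$ --- so $F$ is appropriate to each of $T, B, 4, 5, D, .2$ simultaneously, and hence to any $\mathrm{\Sigma}$ among them. Since $F$ is finite, a valuation $V$ with $F$ exists in $\mathrm{RCA}_0$: the recursion defining $V(0,\cdot)$ is harmless because on $F$ we have $V(0,\Box\varphi)=V(0,\varphi)$, so no unbounded quantifier over $W$ enters and the obstruction flagged in the Remark does not arise. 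Now suppose toward a contradiction that $\mathrm{Pbl}_{\mathbf{K}\mathrm{\Sigma}}(\emptyset, \bot)$ holds. By Theorem \ref{sound} we obtain $\Vdash_{\mathbb{F}_{\mathrm{\Sigma}}}\bot$, so in particular $F\Vdash\bot$, whence $V(0, \bot)=1$; but clause (4) of the definition of a Kripke model gives $V(0, \bot)=0$, a contradiction. Hence $\mathbf{K}\mathrm{\Sigma}$ is consistent.

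For $\mathbf{GL}$ the single reflexive point fails, since the constant sequence $0, 0, 0, \dots$ is an infinite ascending sequence by $R$, so that frame is not appropriate to $L$. Instead I would take the single irreflexive point $F'=(\{0\}, \emptyset)$. Its relation is vacuously transitive, and since no two worlds are $R$-related there is certainly no infinite ascending sequence by $R$, so $F'$ is appropriate to $L$; being finite, it again carries a valuation $V$ in $\mathrm{RCA}_0$. Repeating the argument, if $\mathrm{Pbl}_{\mathbf{GL}}(\emptyset, \bot)$ held then Theorem \ref{sound} would give $F'\Vdash\bot$ and hence $V(0,\bot)=1$, contradicting $V(0,\bot)=0$; thus $\mathbf{GL}$ is consistent.

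I do not anticipate a genuine obstacle here. The only points deserving care are confirming that the single pair $(0,0)$ really validates all six frame conditions at once (so that one frame handles every $\mathrm{\Sigma}$ uniformly) and that the empty relation on a single point meets both clauses of appropriateness to $L$; both are immediate from the definitions. The one conceptual subtlety is that the Remark warns that atomic valuations need not extend to full valuations in $\mathrm{RCA}_0$, but this failure is specific to infinite frames, and on the finite frames used here the extension is available, so no induction beyond what the Soundness Theorem already supplies is needed.
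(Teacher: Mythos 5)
Your proposal is correct and takes essentially the same route as the paper: the paper also invokes Theorem~\ref{sound} with the single reflexive point $(\{0\},\{(0,0)\})$ for $\mathrm{\Sigma}\subseteq\{T,B,4,5,D,.2\}$ and the single irreflexive point $(\{0\},\emptyset)$ for $L$. Your extra care in noting that a valuation actually exists on these finite frames (so that $F\Vdash\bot$ is not vacuously true) is a point the paper leaves implicit, but it does not change the argument.
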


\begin{proof}
By Theorem~\ref{sound}, it is enough to show that there exists a frame $F$ which is appropriate to $\mathrm{\Sigma}$. $F=(\{0\}, \{(0, 0)\})$ is the desired frame. Moreover, $F=(\{0\}, \emptyset)$ is a frame which is appropriate to $L$.
\end{proof}

\section{Weak Completeness Theorem in second-order arithmetic}
In this section, we will prove the weak completeness theorems for modal logic in $\mathrm{RCA}_0$. For this purpose, we will formalize the proof of weak completeness theorem by Moss \cite{MR2349876} in $\mathrm{RCA}_0$. Unless otherwise specified, we will assume $\mathrm{\Sigma}\subseteq\{T, B, 4, D\}$. First, for a given $\mathbf{K}\mathrm{\mathrm{\Sigma}}$-consistent formula, we aim to create the weak Kripke model in $\mathrm{RCA}_0$. 

For any $\varphi\in\mathrm{Fml}$, $sub(\varphi)$ is the set of subformulas of $\varphi$. A weak Kripke model for $\varphi$ is a Kripke model with the valuation restricted to subformulas of $\varphi$.

\begin{dfn}[Weak Kripke model, $\mathrm{RCA}_0$]
\emph{Weak Kripke model of $\varphi$} is a tuple $M=(W, R, V)$ satisfying the following conditions:
\begin{enumerate}
    \item $W\subseteq\mathbb{N}$ is a non-empty set,
    \item $R$ is a binary relation on $W$, i.e., $R\subseteq W\times W$,
    \item $V : W\times sub(\varphi) \rightarrow\{0, 1\}$,
    \item $V(w, \bot)=0$ for any $w\in W$, 
    \item $V(w, \psi\rightarrow\theta)=1-V(w, \psi)(1-V(w, \theta))$ for any $w\in W$ and any $\psi, \theta\in sub(\varphi)$, 
    \item $V(w, \Box\psi)=1\iff\forall v\in W(wRv\rightarrow V(v, \psi)=1)$ for any $w\in W$ and any $\psi\in sub(\varphi)$.
\end{enumerate}
\end{dfn}

To construct the model, we make some preparations.
Let $p_0, p_1, \dots$ be an enumeration of $\mathrm{Atm}$.
We define the \emph{height fuction} $ht : \mathrm{Fml}\rightarrow\mathbb{N}$ and the \emph{order function} $ord : \mathrm{Fml}\rightarrow\mathbb{N}$ by primitive recursion:
\begin{itemize}
    \item $ht(p_n)=0$ for all $n$,
    \item $ht(\varphi\rightarrow\psi)=\max\{ht(\varphi), ht(\psi)\}$,
    \item $ht(\Box\varphi)=1+ht(\varphi)$.
    \item $ord(p_n)=n$ for all $n$,
    \item $ord(\varphi\rightarrow\psi)=\max\{ord(\varphi), ord(\psi)\}$,
    \item $ord(\Box\varphi)=ord(\varphi)$.
\end{itemize}

\begin{dfn}[$\mathrm{RCA}_0$]
We define the followings.
\begin{itemize}
    \item $\mathcal{L}_{h, n}=\{\varphi\in\mathrm{Fml}\mid ht(\varphi)\leq h\land ord(\varphi)\leq n\}$ for all $h$ and $n$,
    \item $\hat{T}\equiv\bigwedge T\land\bigwedge_{p\in\{p_0, \dots, p_n\}\setminus T}\neg p$  for all $T\subseteq\{p_0, \dots, p_n\}$,
    \item $\bigoplus X\equiv\bigvee_{\alpha\in X}(\alpha\land\bigwedge_{\beta\in X\setminus\{\alpha\}}\neg\beta)$ for all $X\subseteq\mathrm{Fml}$.
\end{itemize}

For any $n$, we define the finite set $C_{h, n}\subseteq\mathrm{Fml}$ recursively:
\begin{itemize}
    \item $C_{0, n}=\{\hat{T}\mid T\subseteq\{p_0, \dots, p_n\}\}$,
    \item $C_{h+1, n}=\{\alpha_{S, T}\equiv\bigl(\bigwedge_{\psi\in S}\Diamond\psi\bigr)\land\bigl(\Box\bigvee S\bigr)\land\hat{T}\mid S\subseteq C_{h, n}, T\subseteq\{p_0, \dots, p_n\}\}$.
\end{itemize}
\end{dfn}

The members of $C_{h, n}$ are called \emph{canonical formulas}. They are of height $\leq h$ and built from the first $n$ atomic formulas.
Note that each $C_{h, n}$ exists as a finite set and they are uniformly available within $\mathrm{RCA}_{0}$ by the primitive recursion.
We see several basic lemmas.
\begin{lem}[$\mathrm{RCA}_0$]\label{select}
Fix $h$ and $n$. For all $\varphi\in\mathcal{L}_{h, n}$ and for all $\alpha\in C_{h, n}$, either $\mathrm{Pbl}_{\mathbf{K}\mathrm{\mathrm{\Sigma}}}(\emptyset, \alpha\rightarrow\varphi)$ or $\mathrm{Pbl}_{\mathbf{K}\mathrm{\mathrm{\Sigma}}}(\emptyset, \alpha\rightarrow\neg\varphi)$ holds.
\end{lem}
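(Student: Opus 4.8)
The plan is to prove the statement by induction on the height $h$ and, for each fixed $h$, by induction on the structure of $\varphi$ (equivalently, course-of-values induction on the code of $\varphi$), carrying $n$ and $\alpha\in C_{h,n}$ as parameters. The driving observation is that the recursive definition of $C_{h,n}$ mirrors the height measure: a canonical formula $\alpha\in C_{h,n}$ with $h\ge 1$ has the shape $\alpha=\alpha_{S,T}=\bigl(\bigwedge_{\psi'\in S}\Diamond\psi'\bigr)\land\bigl(\Box\bigvee S\bigr)\land\hat{T}$ with $S\subseteq C_{h-1,n}$, so a modal subformula of height $h$ will be decided using the way the lower-level canonical formulas in $S$ decide the corresponding height-$(h-1)$ subformula.

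For the base of the structural induction, if $\varphi$ is atomic (an element of $\{p_0,\dots,p_n\}$ or $\bot$), then $\alpha$ proves its conjunct $\hat{T}$, and by definition of $\hat{T}$ either $\hat{T}\to\varphi$ or $\hat{T}\to\neg\varphi$ is a propositional tautology; composing with $\alpha\to\hat{T}$ gives the required proof. If $\varphi=\psi\to\theta$, then $\psi$ and $\theta$ lie in $\mathcal{L}_{h,n}$ and have smaller codes, so by the induction hypothesis $\alpha$ decides each of them; a short propositional argument then shows $\alpha$ decides $\psi\to\theta$ (if $\alpha\to\neg\psi$ or $\alpha\to\theta$ is provable we get $\alpha\to(\psi\to\theta)$, and if $\alpha\to\psi$ together with $\alpha\to\neg\theta$ we get $\alpha\to\neg(\psi\to\theta)$).

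The heart of the argument is the case $\varphi=\Box\psi$. Here $h\ge 1$, $\psi\in\mathcal{L}_{h-1,n}$, and $\alpha=\alpha_{S,T}$ with $S\subseteq C_{h-1,n}$. By the induction hypothesis at height $h-1$ applied to $\psi$, every $\beta\in S$ decides $\psi$. If $\beta\to\psi$ is provable for all $\beta\in S$, then $\bigvee S\to\psi$ is provable, whence $\Box(\bigvee S\to\psi)$ by necessitation and $\Box\bigvee S\to\Box\psi$ by the $K$ axiom; since $\alpha$ proves its conjunct $\Box\bigvee S$, we obtain $\alpha\to\Box\psi$, i.e. $\alpha\to\varphi$. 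Otherwise some $\beta_0\in S$ has $\beta_0\to\neg\psi$ provable; by monotonicity of $\Diamond$ (from $\beta_0\to\neg\psi$ derive $\Box\neg\neg\psi\to\Box\neg\beta_0$ via necessitation and $K$, that is, $\Diamond\beta_0\to\Diamond\neg\psi$) together with the provable equivalence $\Diamond\neg\psi\leftrightarrow\neg\Box\psi$, and using that $\alpha$ proves its conjunct $\Diamond\beta_0$, we obtain $\alpha\to\neg\Box\psi$, i.e. $\alpha\to\neg\varphi$. This settles the inductive step.

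The main obstacle is not the mathematics but keeping the induction inside $\mathrm{I}\Sigma^0_1$. Because $\mathrm{Pbl}_{\mathbf{K}\mathrm{\Sigma}}(\emptyset,\cdot)$ is a $\Sigma^0_1$ predicate, the displayed disjunction is $\Sigma^0_1$, and the statement ``for all $\varphi\in\mathcal{L}_{h,n}$ and all $\alpha\in C_{h,n}$, $\alpha$ decides $\varphi$'' — with $\varphi$ ranging over infinitely many codes — is $\Pi^0_2$ at each height, so a naive induction on $h$ would require $\Pi^0_2$ induction, which is unavailable. To avoid this I would make the witnessing proofs explicit: define, by course-of-values recursion on the code of $\varphi$, a total function $F(\varphi,h,n,\alpha)$ that outputs the proof produced in the three cases above (with the $\Box$ case calling $F(\psi,h-1,n,\beta)$ for $\beta\in S$, each at a strictly smaller code). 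Such an $F$ exists in $\mathrm{RCA}_0$ by primitive recursion, and the correctness assertion ``for all $h\ge ht(\varphi)$, $n\ge ord(\varphi)$ and $\alpha\in C_{h,n}$, $F(\varphi,h,n,\alpha)$ is a valid $\mathbf{K}\mathrm{\Sigma}$-proof of $\alpha\to\varphi$ or of $\alpha\to\neg\varphi$'' is now $\Pi^0_1$. One then verifies it by a single course-of-values ($\Pi^0_1$) induction on the code of $\varphi$, which is available in $\mathrm{RCA}_0$; since this universal form implies the statement for the fixed $h$ and $n$, the lemma follows at once.
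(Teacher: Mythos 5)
Your proof is correct, and its mathematical core coincides with the paper's: induction on the structure of $\varphi$, with atoms decided by the conjunct $\hat{T}$, the implication case handled propositionally from the induction hypothesis, and the case $\varphi\equiv\Box\psi$ settled by applying the induction hypothesis to the members of $S\subseteq C_{h-1,n}$ and using that $\alpha_{S,T}$ proves each conjunct $\Diamond\beta$ ($\beta\in S$) and $\Box\bigvee S$ --- exactly the paper's two subcases. The only real divergence is the bookkeeping that keeps the induction inside $\mathrm{RCA}_0$. The paper leaves $h$ and $n$ fixed as in the statement and inducts on the length of $\varphi$; the induction hypothesis then only ever refers to canonical formulas in the finitely many sets $C_{h',n}$ with $h'\le h$, so the induction statement is a bounded universal quantification over a $\Sigma^0_1$ matrix, which $\mathrm{RCA}_0$ can treat as $\Sigma^0_1$ (using $\mathrm{B}\Sigma^0_1$, a consequence of $\mathrm{I}\Sigma^0_1$); the $\Pi^0_2$ danger you worry about never arises in that formulation. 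You instead strengthen the statement to quantify over all $h\ge ht(\varphi)$ and $n\ge ord(\varphi)$, which genuinely is $\Pi^0_2$, and then repair this by making the witnessing proofs explicit via the function $F$ and running a $\Pi^0_1$ course-of-values induction. Both are legitimate; your version buys uniformity in $h,n$ and a lower-complexity induction, at the price of justifying the recursion defining $F$ --- note it is not literally primitive recursion, since the parameters change from $(h,\alpha)$ to $(h-1,\beta)$ across recursive calls, but it is a standard recursion with $\Sigma^0_1$ graph whose totality is proved by induction on the code of $\varphi$, so it is available in $\mathrm{RCA}_0$. A small point in your favor: the paper opens its proof by fixing $\alpha\equiv\alpha_{S,T}$ before announcing the induction, even though the $\Box$ case applies the hypothesis to different canonical formulas $\beta$ at height $h-1$; your formulation, which keeps the quantifier over $\alpha$ inside the induction statement, is the form the argument actually needs.
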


\begin{proof}
Let $\alpha\equiv\alpha_{S, T}\in C_{h, n}$. We show this lemma by induction on the length of $\varphi$ using $\mathrm{I}\mathrm{\Sigma}^0_1$.

Assume that $\varphi$ is a propositional variable $p$. If $p\in T$, then $\mathrm{Pbl}_{\mathbf{K}\mathrm{\mathrm{\Sigma}}}(\emptyset, \hat{T}\rightarrow p)$ holds. If $p\not\in T$, then $\mathrm{Pbl}_{\mathbf{K}\mathrm{\mathrm{\Sigma}}}(\emptyset, \hat{T}\rightarrow\neg p)$ holds.  

Assume that $\varphi\equiv\bot$. Then $\mathrm{Pbl}_{\mathbf{K}\mathrm{\mathrm{\Sigma}}}(\emptyset, \hat{T}\rightarrow\neg\bot)$ holds.

Assume that $\varphi\equiv\psi\rightarrow\theta$. Then by the assumption of induction, one of (1) $\mathrm{Pbl}_{\mathbf{K}\mathrm{\mathrm{\Sigma}}}(\emptyset, \alpha\rightarrow\neg\psi)$ or (2) $\mathrm{Pbl}_{\mathbf{K}\mathrm{\mathrm{\Sigma}}}(\emptyset, \alpha\rightarrow\theta)$ or (3) $\mathrm{Pbl}_{\mathbf{K}\mathrm{\mathrm{\Sigma}}}(\emptyset, \alpha\rightarrow\psi)\land\mathrm{Pbl}_{\mathbf{K}\mathrm{\mathrm{\Sigma}}}(\emptyset, \alpha\rightarrow\neg\theta)$ is satisfiable. If either (1) or (2) holds, then $\mathrm{Pbl}_{\mathbf{K}\mathrm{\mathrm{\Sigma}}}(\emptyset, \alpha\rightarrow\psi\rightarrow\theta)$ holds. If (3) holds, then $\mathrm{Pbl}_{\mathbf{K}\mathrm{\mathrm{\Sigma}}}(\emptyset, \alpha\rightarrow\neg(\psi\rightarrow\theta))$ holds.

Assume that $\varphi\equiv\Box\psi$. Then the assumption of induction, for all $\beta$, either $\mathrm{Pbl}_{\mathbf{K}\mathrm{\mathrm{\Sigma}}}(\emptyset, \beta\rightarrow\psi)$ or $\mathrm{Pbl}_{\mathbf{K}\mathrm{\mathrm{\Sigma}}}(\emptyset, \beta\rightarrow\neg\psi)$ holds. We have two cases. First, assume that for some $\beta_0\in S$, $\mathrm{Pbl}_{\mathbf{K}\mathrm{\mathrm{\Sigma}}}(\emptyset, \beta_0\rightarrow\neg\psi)$. Then $\mathrm{Pbl}_{\mathbf{K}\mathrm{\mathrm{\Sigma}}}(\emptyset, \Diamond\beta_0\rightarrow\Diamond\neg\psi)$ holds. Thus $\mathrm{Pbl}_{\mathbf{K}\mathrm{\mathrm{\Sigma}}}(\emptyset, \alpha\rightarrow\neg\Box\psi)$ holds. The other case is when for each $\beta\in S$, $\mathrm{Pbl}_{\mathbf{K}\mathrm{\mathrm{\Sigma}}}(\emptyset, \beta\rightarrow\psi)$ holds. Then $\mathrm{Pbl}_{\mathbf{K}\mathrm{\mathrm{\Sigma}}}(\emptyset, \bigvee S\rightarrow\psi)$ holds. Thus $\mathrm{Pbl}_{\mathbf{K}\mathrm{\mathrm{\Sigma}}}(\emptyset, \Box(\bigvee S)\rightarrow\Box\psi)$holds, so $\mathrm{Pbl}_{\mathbf{K}\mathrm{\mathrm{\Sigma}}}(\emptyset, \alpha\rightarrow\Box\psi)$ holds.
\end{proof}

\begin{prop}[$\mathrm{RCA}_0$]\label{bigo}
For all $h$ and $n$, $\mathrm{Pbl}_{\mathbf{K}\mathrm{\mathrm{\Sigma}}}(\emptyset, \bigoplus C_{h, n})$ holds.
\end{prop}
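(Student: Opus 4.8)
The plan is to reduce provability of $\bigoplus C_{h,n}$ to two separate facts about the finite set $C_{h,n}$: \emph{exhaustiveness}, namely $\mathrm{Pbl}_{\mathbf{K}\mathrm{\Sigma}}(\emptyset, \bigvee C_{h,n})$, and \emph{pairwise exclusivity}, namely $\mathrm{Pbl}_{\mathbf{K}\mathrm{\Sigma}}(\emptyset, \neg(\alpha\land\beta))$ for all distinct $\alpha,\beta\in C_{h,n}$. Once both are available, a purely propositional deduction yields $\bigoplus C_{h,n}$: assuming $\bigvee C_{h,n}$, some disjunct $\alpha$ holds, and exclusivity forces every other $\beta$ to fail, so $\alpha\land\bigwedge_{\beta\neq\alpha}\neg\beta$ holds. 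Since the displayed conjunction and both facts are $\Sigma^0_1$ (the bounded quantifiers over the finite set $C_{h,n}$ keep exclusivity $\Sigma^0_1$, using $\mathrm{B}\Sigma^0_1$ which is available in $\mathrm{RCA}_0$), I fix $n$ and prove exhaustiveness and exclusivity simultaneously by $\mathrm{I}\Sigma^0_1$ induction on $h$. For the base case $h=0$ the members of $C_{0,n}$ are the complete atomic conjunctions $\hat T$, so $\bigvee_{T}\hat T$ and $\neg(\hat T\land\hat{T'})$ for $T\neq T'$ are propositional tautologies and hence provable.

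For the inductive step I treat exclusivity first. Given distinct $\alpha_{S,T}$ and $\alpha_{S',T'}$, if $T\neq T'$ the conjuncts $\hat T$ and $\hat{T'}$ are already incompatible by the $h=0$ case, so $\neg(\alpha_{S,T}\land\alpha_{S',T'})$ follows. If $T=T'$ but $S\neq S'$, pick $\psi$ in the symmetric difference, say $\psi\in S\setminus S'$ (the other case is symmetric); then $\alpha_{S,T}$ contains the conjunct $\Diamond\psi$, while the inductive exclusivity hypothesis gives $\mathrm{Pbl}_{\mathbf{K}\mathrm{\Sigma}}(\emptyset, \psi\to\neg\chi)$ for every $\chi\in S'$, hence $\mathrm{Pbl}_{\mathbf{K}\mathrm{\Sigma}}(\emptyset, \bigvee S'\to\neg\psi)$, and by necessitation together with the $K$ axiom $\mathrm{Pbl}_{\mathbf{K}\mathrm{\Sigma}}(\emptyset, \Box\bigvee S'\to\Box\neg\psi)$, i.e.\ $\Box\bigvee S'\to\neg\Diamond\psi$; since $\alpha_{S',T'}$ contains $\Box\bigvee S'$, the two formulas are incompatible.

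For exhaustiveness I case-split on the atomic state (there are finitely many $T$, exhaustive by the $h=0$ case) and, for each $\psi\in C_{h,n}$, on whether $\Diamond\psi$ or $\neg\Diamond\psi$ holds; distributing these finitely many instances of excluded middle produces a provable disjunction whose disjuncts are indexed by pairs $(S,T)$, where $S$ collects exactly the $\psi$ with $\Diamond\psi$. In the disjunct for $(S,T)$ the conjuncts $\bigwedge_{\psi\in S}\Diamond\psi$ and $\hat T$ are present directly; to recover $\Box\bigvee S$ I use that $\mathrm{Pbl}_{\mathbf{K}\mathrm{\Sigma}}(\emptyset, \Box\bigvee C_{h,n})$ holds (by necessitation from the inductive exhaustiveness hypothesis) together with the conjuncts $\Box\neg\psi$ for $\psi\notin S$, combining boxes via $K$ into $\Box(\bigvee C_{h,n}\land\bigwedge_{\psi\notin S}\neg\psi)$ and then applying box-monotonicity to the tautology $\bigvee C_{h,n}\land\bigwedge_{\psi\notin S}\neg\psi\to\bigvee S$. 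Thus each disjunct proves the corresponding $\alpha_{S,T}$, giving $\bigvee C_{h+1,n}$.

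The main obstacle is bookkeeping rather than conceptual: I must verify that the auxiliary modal principles used above — box-monotonicity (from provability of $\varphi\to\psi$ infer provability of $\Box\varphi\to\Box\psi$) and finite box-combination via $K$ — are derivable in $\mathbf{K}\mathrm{\Sigma}$ from the axioms together with modus ponens, necessitation and uniform substitution, and, more delicately, that each induction step produces an \emph{actual} proof object of the required $\Sigma^0_1$ form. Because $C_{h,n}$ and all the finitely many subsets $S\subseteq C_{h,n}$ exist as finite sets uniformly in $\mathrm{RCA}_0$, the passage from the proof witnessing level $h$ to the one witnessing level $h+1$ is a finite, effective manipulation, so the induction stays within $\mathrm{I}\Sigma^0_1$; note also that only $\mathbf{K}$-reasoning is invoked, so the argument is uniform in $\mathrm{\Sigma}$.
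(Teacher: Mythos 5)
Your proof is correct, and it rests on the same modal ingredient as the paper's: applying necessitation and the $K$ axiom to the induction hypothesis in order to pass between $\bigwedge_{\psi\notin S}\neg\Diamond\psi$ and $\Box\bigvee S$. The organization, however, is genuinely different. The paper keeps the single invariant $\mathrm{Pbl}_{\mathbf{K}\mathrm{\Sigma}}(\emptyset,\bigoplus C_{h,n})$ and proves the step by factoring: it shows $\bigoplus C_{h+1,n}$ is provably equivalent to $\bigoplus\{\hat T\mid T\subseteq\{p_0,\dots,p_n\}\}\land\bigoplus\{S_0\mid S\subseteq C_{h,n}\}$ with $S_0\equiv\bigwedge_{\psi\in S}\Diamond\psi\land\Box\bigvee S$, observes that the ``exactly one complete $\Diamond$-type'' statement $\bigoplus\{\alpha_S\mid S\subseteq C_{h,n}\}$ is provable, and reduces everything to the single provable equivalence $\bigwedge_{\psi\notin S}\neg\Diamond\psi\leftrightarrow\Box\bigvee S$, obtained from the induction hypothesis $\bigoplus C_{h,n}$ by necessitation. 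You instead split $\bigoplus$ into ``at least one'' ($\bigvee C_{h,n}$) and ``at most one'' (pairwise exclusivity) and run a simultaneous $\Sigma^0_1$ induction on $h$; your exhaustiveness step then delivers one direction of the paper's key equivalence (from $\Box\bigvee C_{h,n}$ and the conjuncts $\Box\neg\psi$, $\psi\notin S$, to $\Box\bigvee S$) and your exclusivity step delivers the other ($\Box\bigvee S'\to\neg\Diamond\psi$ for $\psi\notin S'$). Your two-invariant decomposition makes the propositional bookkeeping more transparent and isolates exactly where modal reasoning enters; the paper's equivalence-factoring is more compact because one biconditional serves both purposes. One point deserves care in your write-up: for the base case $h=0$ (and likewise for the parameterized tautologies you invoke, such as the distribution step and the final assembly of $\bigoplus$ from exhaustiveness plus exclusivity), asserting ``these are tautologies, hence provable'' requires, uniformly in $n$, an actual proof object; the paper secures this by a separate explicit $\mathrm{I}\Sigma^0_1$ induction on $n$, whereas you would need either that auxiliary induction or a formalized lemma that propositional tautologies with a standard number of variables are provable in the Hilbert system (this is available in $\mathrm{RCA}_0$ via a Kalm\'ar-style argument, but it should be stated rather than left implicit).
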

\begin{proof}
First, we show that for all $n$, $\mathrm{Pbl}_{\mathbf{K}\mathrm{\mathrm{\Sigma}}}(\emptyset, \bigoplus C_{0, n})$ by $\mathrm{I}\mathrm{\Sigma}^0_1$. $\bigoplus C_{0, 0}=p_0\lor\neg p_0$. Thus $\mathrm{Pbl}_{\mathbf{K}\mathrm{\mathrm{\Sigma}}}(\emptyset, \bigoplus C_{0, 0})$. Suppose that $\mathrm{Pbl}_{\mathbf{K}\mathrm{\mathrm{\Sigma}}}(\emptyset, \bigoplus C_{0, n})$ holds. $C_{0, n+1}=\{p_{n+1}\land\hat{T}\mid T\subseteq\{p_0, \dots, p_n\}\}\cup\{\neg p_{n+1}\land\hat{T}\mid T\subseteq\{p_0, \dots, p_n\}\}$, so $\bigoplus C_{0, n+1}=(p_{n+1}\land\bigoplus C_{0, n})\lor(\neg p_{n+1}\land\bigoplus C_{0, n})$. Thus $\bigoplus C_{0, n+1}=(p_{n+1}\lor\neg p_{n+1})\land\bigoplus C_{0, n}$. By the assumption, $\mathrm{Pbl}_{\mathbf{K}\mathrm{\mathrm{\Sigma}}}(\emptyset, \bigoplus C_{0, n+1})$ holds. 

Second, fix $n$ and we show that for all $h$, $\mathrm{Pbl}_{\mathbf{K}\mathrm{\mathrm{\Sigma}}}(\emptyset, \bigoplus C_{h, n})$ holds by $\mathrm{I}\mathrm{\Sigma}^0_1$. Let $\alpha_S=\bigwedge_{\psi\in S}\Diamond\psi\land\bigwedge_{\psi\not\in S}\neg\Diamond\psi$ and $S_0\equiv\bigwedge_{\psi\in S}\Diamond\psi\land\Box\bigvee S$. $\mathrm{Pbl}_{\mathbf{K}\mathrm{\mathrm{\Sigma}}}(\emptyset, \bigoplus C_{0, n})$ holds by the above proof. Suppose that $\mathrm{Pbl}_{\mathbf{K}\mathrm{\mathrm{\Sigma}}}(\emptyset, \bigoplus C_{h, n})$ holds. Then the following holds.
\begin{equation*}
\mathrm{Pbl}_{\mathbf{K}\mathrm{\mathrm{\Sigma}}}\Bigl(\emptyset, \bigoplus C_{h+1, n}\leftrightarrow(\bigoplus\{\hat{T}\mid T\subseteq\{p_0, \dots, p_n\}\}\land\bigoplus\{S_0\mid S\subseteq C_{h, n}\})\Bigr).
\end{equation*}
Then $\mathrm{Pbl}_{\mathbf{K}\mathrm{\mathrm{\Sigma}}}(\emptyset, \bigoplus\{\alpha_{S}\mid S\subseteq C_{h, n}\}))$ holds, so it is enough to show that \\$\mathrm{Pbl}_{\mathbf{K}\mathrm{\mathrm{\Sigma}}}(\emptyset, \bigwedge_{\psi\not\in S}\neg\Diamond\psi\leftrightarrow\Box\bigvee S)$ holds for all $S\subseteq C_{h, n}$. Fix $S\subseteq C_{h, n}$. Since $\mathrm{Pbl}_{\mathbf{K}\mathrm{\mathrm{\Sigma}}}(\emptyset, \bigoplus C_{h, n})$ holds, $\mathrm{Pbl}_{\mathbf{K}\mathrm{\mathrm{\Sigma}}}(\emptyset, \bigwedge_{\psi\not\in S}\neg\psi\leftrightarrow\bigvee S)$ holds. Thus by Necessitation, $\mathrm{Pbl}_{\mathbf{K}\mathrm{\mathrm{\Sigma}}}(\emptyset, \bigwedge_{\psi\not\in S}\neg\Diamond\psi\leftrightarrow\Box\bigvee S)$ holds. 
\end{proof}

\begin{lem}[$\mathrm{RCA}_0$]\label{notempty}
Let $\varphi\in\mathcal{L}_{h, n}$ and $\psi\in\mathrm{Fml}$. If $\varphi$ is $\mathbf{K}\mathrm{\mathrm{\Sigma}}$-consistent, then there exists $\alpha\in C_{h+1, n}$ such that $\alpha$ is $\mathbf{K}\mathrm{\mathrm{\Sigma}}$-consistent and $\mathrm{Pbl}_{\mathbf{K}\mathrm{\mathrm{\Sigma}}}(\emptyset, \alpha\rightarrow\varphi)$ holds.
\end{lem}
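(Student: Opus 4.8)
The plan is to use the two facts established just above: Lemma~\ref{select}, which decides each canonical formula against $\varphi$, and Proposition~\ref{bigo}, which says that the finite disjunction of all level-$(h+1)$ canonical formulas is provable. First I would note that $\varphi\in\mathcal{L}_{h,n}\subseteq\mathcal{L}_{h+1,n}$, so Lemma~\ref{select} applies at level $h+1$: for every $\alpha\in C_{h+1,n}$, either $\mathrm{Pbl}_{\mathbf{K}\mathrm{\Sigma}}(\emptyset,\alpha\rightarrow\varphi)$ or $\mathrm{Pbl}_{\mathbf{K}\mathrm{\Sigma}}(\emptyset,\alpha\rightarrow\neg\varphi)$ holds. (The extra hypothesis $\psi\in\mathrm{Fml}$ is not used in the conclusion and can be disregarded.)

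I would argue by contradiction. Suppose that no $\alpha\in C_{h+1,n}$ is simultaneously $\mathbf{K}\mathrm{\Sigma}$-consistent and satisfies $\mathrm{Pbl}_{\mathbf{K}\mathrm{\Sigma}}(\emptyset,\alpha\rightarrow\varphi)$. Fix $\alpha\in C_{h+1,n}$. If $\alpha$ is inconsistent, then $\mathrm{Pbl}_{\mathbf{K}\mathrm{\Sigma}}(\emptyset,\alpha\rightarrow\bot)$ holds, whence $\mathrm{Pbl}_{\mathbf{K}\mathrm{\Sigma}}(\emptyset,\alpha\rightarrow\neg\varphi)$ by propositional logic. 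If $\alpha$ is consistent, then by the supposition $\mathrm{Pbl}_{\mathbf{K}\mathrm{\Sigma}}(\emptyset,\alpha\rightarrow\varphi)$ fails, so Lemma~\ref{select} forces $\mathrm{Pbl}_{\mathbf{K}\mathrm{\Sigma}}(\emptyset,\alpha\rightarrow\neg\varphi)$. Either way, $\mathrm{Pbl}_{\mathbf{K}\mathrm{\Sigma}}(\emptyset,\alpha\rightarrow\neg\varphi)$ holds for every $\alpha\in C_{h+1,n}$.

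From this I would assemble a single proof of $(\bigvee C_{h+1,n})\rightarrow\neg\varphi$ and combine it with Proposition~\ref{bigo}. Indeed $\bigoplus C_{h+1,n}$ propositionally entails $\bigvee C_{h+1,n}$, so $\mathrm{Pbl}_{\mathbf{K}\mathrm{\Sigma}}(\emptyset,\bigvee C_{h+1,n})$ holds by Proposition~\ref{bigo} and modus ponens; together with $\mathrm{Pbl}_{\mathbf{K}\mathrm{\Sigma}}(\emptyset,(\bigvee C_{h+1,n})\rightarrow\neg\varphi)$ this yields $\mathrm{Pbl}_{\mathbf{K}\mathrm{\Sigma}}(\emptyset,\neg\varphi)$, i.e. $\mathrm{Pbl}_{\mathbf{K}\mathrm{\Sigma}}(\{\varphi\},\bot)$, contradicting the $\mathbf{K}\mathrm{\Sigma}$-consistency of $\varphi$. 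This contradiction produces the desired consistent $\alpha$ with $\mathrm{Pbl}_{\mathbf{K}\mathrm{\Sigma}}(\emptyset,\alpha\rightarrow\varphi)$.

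The main obstacle is the passage, internal to $\mathrm{RCA}_0$, from ``$\mathrm{Pbl}_{\mathbf{K}\mathrm{\Sigma}}(\emptyset,\alpha\rightarrow\neg\varphi)$ for every $\alpha\in C_{h+1,n}$'' to the single statement ``$\mathrm{Pbl}_{\mathbf{K}\mathrm{\Sigma}}(\emptyset,(\bigvee C_{h+1,n})\rightarrow\neg\varphi)$'': a priori the proof codes witnessing the individual instances of $\mathrm{Pbl}_{\mathbf{K}\mathrm{\Sigma}}$ are unbounded, so they cannot be glued naively. I would resolve this by first applying $\Sigma^0_1$-bounding ($B\Sigma^0_1$, available from $\mathrm{I}\Sigma^0_1$) to obtain a uniform bound on these proof codes over the finite set $C_{h+1,n}$, and then, with that bound fixed, running a $\mathrm{I}\Sigma^0_1$ induction along an enumeration of $C_{h+1,n}$ to combine the proofs of $\alpha\rightarrow\neg\varphi$ into a proof of the disjunction-implication, using only propositional manipulations inside $\mathrm{Prf}_{\mathbf{K}\mathrm{\Sigma}}$ at each step. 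Everything else in the argument is elementary propositional reasoning.
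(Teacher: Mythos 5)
Your proposal is correct and takes essentially the same route as the paper's own proof: both apply Lemma~\ref{select} (noting $\varphi\in\mathcal{L}_{h+1,n}$) to decide each $\alpha\in C_{h+1,n}$ against $\varphi$, use Proposition~\ref{bigo} together with $\mathrm{Pbl}_{\mathbf{K}\mathrm{\Sigma}}(\emptyset,\bigoplus C_{h+1,n}\rightarrow\bigvee C_{h+1,n})$ to conclude that $\bigvee C_{h+1,n}$ is provable, and derive $\mathrm{Pbl}_{\mathbf{K}\mathrm{\Sigma}}(\emptyset,\neg\varphi)$, contradicting the consistency of $\varphi$. Your single contradiction hypothesis (no $\alpha$ is both consistent and proves $\alpha\rightarrow\varphi$, with the explicit case split on inconsistent $\alpha$) cleanly merges the paper's two-stage argument, and your attention to gluing the finitely many proof codes via $B\Sigma^0_1$/$\mathrm{I}\Sigma^0_1$ makes explicit a step the paper leaves implicit.
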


\begin{proof}
Note that $\varphi\in\mathcal{L}_{h+1, n}$. Let $S=\{\alpha\in C_{h+1, n}\mid\mathrm{Pbl}_{\mathbf{K}\mathrm{\mathrm{\Sigma}}}(\emptyset, \alpha\rightarrow\neg\varphi)\}$.  First, we show the following,
\begin{equation*}
\mathrm{Pbl}_{\mathbf{K}\mathrm{\mathrm{\Sigma}}}(\emptyset, \neg\varphi\leftrightarrow\bigvee_{\alpha\in S}(\alpha\land\neg\varphi)\leftrightarrow\bigvee S).
\end{equation*}

Clearly, $\mathrm{Pbl}_{\mathbf{K}\mathrm{\mathrm{\Sigma}}}(\emptyset, \bigvee_{\alpha\in S}(\alpha\land\neg\varphi)\rightarrow\bigvee S\rightarrow\neg\varphi)$ holds. And for $\alpha\in C_{h, n}$ such that $\alpha\not\in S$, by Lemma~\ref{select}, $\mathrm{Pbl}_{\mathbf{K}\mathrm{\mathrm{\Sigma}}}(\emptyset, \alpha\rightarrow\varphi)$ holds. Thus $\mathrm{Pbl}_{\mathbf{K}\mathrm{\mathrm{\Sigma}}}(\emptyset, \neg\varphi\rightarrow\bigvee_{\alpha\in S}(\alpha\land\neg\varphi))$ holds. 

Assume that for all $\alpha\in C_{h+1, n}$, $\neg\mathrm{Pbl}_{\mathbf{K}\mathrm{\mathrm{\Sigma}}}(\emptyset, \alpha\rightarrow\varphi)$, that is, $\mathrm{Pbl}_{\mathbf{K}\mathrm{\mathrm{\Sigma}}}(\emptyset, \alpha\rightarrow\neg\varphi)$ holds. Then $S=C_{h+1, n}$. Since $\mathrm{Pbl}_{\mathbf{K}\mathrm{\mathrm{\Sigma}}}(\emptyset, \neg\varphi\leftrightarrow\bigvee S)$ holds, \\$\mathrm{Pbl}_{\mathbf{K}\mathrm{\mathrm{\Sigma}}}(\emptyset, \bigvee C_{h+1, n}\rightarrow\neg\varphi)$ holds. By Lemma~\ref{bigo} and $\mathrm{Pbl}_{\mathbf{K}\mathrm{\mathrm{\Sigma}}}(\emptyset, \bigoplus C_{h+1, n}\rightarrow\bigvee C_{h+1, n})$, $\mathrm{Pbl}_{\mathbf{K}\mathrm{\mathrm{\Sigma}}}(\emptyset, \neg\varphi)$ holds. It is a contradiction that $\varphi$ is $\mathbf{K}\mathrm{\mathrm{\Sigma}}$-consistent. Thus there exists $\alpha\in C_{h+1. n}$ such that $\mathrm{Pbl}_{\mathbf{K}\mathrm{\mathrm{\Sigma}}}(\emptyset, \alpha\rightarrow\varphi)$ holds, so $S$ is not empty. Since $\varphi$ is $\mathbf{K}\mathrm{\mathrm{\Sigma}}$-consistent, there exists $\alpha\in S$ such that $\alpha$ is $\mathbf{K}\mathrm{\mathrm{\Sigma}}$-consistent.
\end{proof}

\begin{lem}[Existence lemma, $\mathrm{RCA}_0$]\label{exlem}
Let $\varphi\in\mathcal{L}_{h, n}$ and $\psi\in\mathrm{Fml}$. If $\psi\land\Diamond\varphi$ is $\mathbf{K}\mathrm{\mathrm{\Sigma}}$-consistent, then there exists $\alpha\in C_{h, n}$ such that $\psi\land\Diamond\alpha$ is $\mathbf{K}\mathrm{\mathrm{\Sigma}}$-consistent and $\mathrm{Pbl}_{\mathbf{K}\mathrm{\mathrm{\Sigma}}}(\emptyset, \alpha\rightarrow\varphi)$ holds.
\end{lem}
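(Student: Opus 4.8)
The plan is to mimic the argument of Lemma~\ref{notempty}, but now keeping track of the outer context $\psi$ and the modality $\Diamond$. The two ingredients are: by Proposition~\ref{bigo}, $\mathrm{Pbl}_{\mathbf{K}\mathrm{\Sigma}}(\emptyset, \bigoplus C_{h,n})$ holds, whence in particular $\mathrm{Pbl}_{\mathbf{K}\mathrm{\Sigma}}(\emptyset, \bigvee C_{h,n})$; and by Lemma~\ref{select}, since $\varphi\in\mathcal{L}_{h,n}$, every $\alpha\in C_{h,n}$ provably decides $\varphi$, i.e.\ provably implies $\varphi$ or provably implies $\neg\varphi$.

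First I would set $A=\{\alpha\in C_{h,n}\mid\mathrm{Pbl}_{\mathbf{K}\mathrm{\Sigma}}(\emptyset,\alpha\rightarrow\varphi)\}$. Since $C_{h,n}$ is a finite (hence bounded) set and the defining condition is $\mathrm{\Sigma}^0_1$, this set exists in $\mathrm{RCA}_0$ by bounded $\mathrm{\Sigma}^0_1$ comprehension. I then claim $\mathrm{Pbl}_{\mathbf{K}\mathrm{\Sigma}}(\emptyset,\varphi\leftrightarrow\bigvee A)$. The direction $\bigvee A\rightarrow\varphi$ is immediate, as each $\alpha\in A$ provably implies $\varphi$. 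For $\varphi\rightarrow\bigvee A$ I use $\mathrm{Pbl}_{\mathbf{K}\mathrm{\Sigma}}(\emptyset,\bigvee C_{h,n})$: for each $\alpha\in C_{h,n}\setminus A$ Lemma~\ref{select} gives $\mathrm{Pbl}_{\mathbf{K}\mathrm{\Sigma}}(\emptyset,\alpha\rightarrow\neg\varphi)$, so $\alpha\land\varphi$ is refutable; hence $\bigvee C_{h,n}\land\varphi$ is provably equivalent to $\bigvee_{\alpha\in A}(\alpha\land\varphi)$, yielding $\varphi\rightarrow\bigvee A$.

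Next I propagate the equivalence through the modality. From $\mathrm{Pbl}_{\mathbf{K}\mathrm{\Sigma}}(\emptyset,\varphi\leftrightarrow\bigvee A)$, Necessitation together with the $K$-axiom gives $\mathrm{Pbl}_{\mathbf{K}\mathrm{\Sigma}}(\emptyset,\Diamond\varphi\leftrightarrow\Diamond\bigvee A)$, and the $\mathbf{K}$-theorem $\Diamond\bigvee A\leftrightarrow\bigvee_{\alpha\in A}\Diamond\alpha$ (proved by induction on the finite disjunction) then gives $\mathrm{Pbl}_{\mathbf{K}\mathrm{\Sigma}}(\emptyset,\Diamond\varphi\leftrightarrow\bigvee_{\alpha\in A}\Diamond\alpha)$. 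Conjoining $\psi$ propositionally yields $\mathrm{Pbl}_{\mathbf{K}\mathrm{\Sigma}}(\emptyset,(\psi\land\Diamond\varphi)\leftrightarrow\bigvee_{\alpha\in A}(\psi\land\Diamond\alpha))$. Since $\psi\land\Diamond\varphi$ is assumed $\mathbf{K}\mathrm{\Sigma}$-consistent, so is the equivalent disjunction, and hence some disjunct is consistent: arguing by contradiction, were every $\psi\land\Diamond\alpha$ ($\alpha\in A$) refutable, then gluing the finitely many refutations refutes the whole disjunction and thus $\psi\land\Diamond\varphi$, contradicting consistency. The resulting $\alpha\in A$ satisfies $\mathrm{Pbl}_{\mathbf{K}\mathrm{\Sigma}}(\emptyset,\alpha\rightarrow\varphi)$ by definition of $A$, and $\psi\land\Diamond\alpha$ is $\mathbf{K}\mathrm{\Sigma}$-consistent, as required.

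The main obstacle is the reverse-mathematical bookkeeping rather than the modal content: one must justify forming $A$ by bounded $\mathrm{\Sigma}^0_1$ comprehension (legitimate because $C_{h,n}$ is finite, even though $\mathrm{Pbl}_{\mathbf{K}\mathrm{\Sigma}}$ is only $\mathrm{\Sigma}^0_1$), and one must carry out the passages through the disjunctions—proving $\varphi\leftrightarrow\bigvee A$ and $\Diamond\bigvee A\leftrightarrow\bigvee_{\alpha\in A}\Diamond\alpha$, and extracting the consistent disjunct at the end—by combining finitely many proofs, which is exactly the kind of induction that fits into $\mathrm{I}\mathrm{\Sigma}^0_1$.
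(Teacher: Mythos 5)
Your proposal is correct and follows essentially the same route as the paper: both define the set $S=\{\alpha\in C_{h,n}\mid\mathrm{Pbl}_{\mathbf{K}\mathrm{\Sigma}}(\emptyset,\alpha\rightarrow\varphi)\}$, establish the provable equivalence $(\psi\land\Diamond\varphi)\leftrightarrow\bigvee_{\alpha\in S}(\psi\land\Diamond\alpha)$, and extract a consistent disjunct. The only difference is that the paper asserts the chain of equivalences in one line, whereas you supply the justification (via Proposition~\ref{bigo}, Lemma~\ref{select}, and distribution of $\Diamond$ over finite disjunctions) that the paper leaves implicit.
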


\begin{proof}
Let $S=\{\alpha\in C_{h, n}\mid\mathrm{Pbl}_{\mathbf{K}\mathrm{\mathrm{\Sigma}}}(\emptyset, \alpha\rightarrow\varphi)\}$. Then, $\mathrm{Pbl}_{\mathbf{K}\mathrm{\mathrm{\Sigma}}}(\emptyset, \Diamond\varphi\leftrightarrow\Diamond(\bigvee S)\leftrightarrow(\bigvee\{\Diamond\alpha\mid\alpha\in S\}))$ holds. Thus,
\begin{equation*}
\mathrm{Pbl}_{\mathbf{K}\mathrm{\mathrm{\Sigma}}}(\emptyset, (\psi\land\Diamond\varphi)\leftrightarrow(\psi\land\bigvee\{\Diamond\alpha\mid\alpha\in S\})\leftrightarrow(\bigvee\{\psi\land\Diamond\alpha\mid\alpha\in S\})).
\end{equation*}

Since $\psi\land\Diamond\varphi$ is $\mathbf{K}\mathrm{\mathrm{\Sigma}}$-consistent, there exists $\alpha\in S$ such that $\psi\land\Diamond\alpha$ is $\mathbf{K}\mathrm{\mathrm{\Sigma}}$-consistent.
\end{proof}

With the above preparation, we first show the weak completeness theorem for the case of $4\not\in\mathrm{\Sigma}$ in $\mathrm{RCA}_0$.

\begin{dfn}[$\mathrm{RCA}_0$]\label{wn4first}
We assume that $\mathrm{\Sigma}\subseteq\{T, B, D\}$. Let $\varphi\in\mathcal{L}_{h, n}$ be $\mathbf{K}\mathrm{\mathrm{\Sigma}}$-consistent. Then we define $C=(\tilde{W}, \xrightarrow{c}, \tilde{V})$ satisfying the followings:
\begin{itemize}
    \item $\tilde{W}=\{\alpha\in C_{h+1, n}\mid\neg\mathrm{Pbl}_{\mathbf{K}\mathrm{\mathrm{\Sigma}}}(\alpha, \bot)\land\mathrm{Pbl}_{\mathbf{K}\mathrm{\mathrm{\Sigma}}}(\emptyset, \alpha\rightarrow\varphi)\}$,
    \item $\xrightarrow{c}=\{(\alpha, \beta)\in\tilde{W}\times\tilde{W}\mid\neg\mathrm{Pbl}_{\mathbf{K}\mathrm{\mathrm{\Sigma}}}(\alpha\land\Diamond\beta, \bot)\}$,
    \item For all $\psi\in sub(\varphi)$ and $\alpha\in\tilde{W}$, $\tilde{V}(\alpha, \psi)=1\iff\mathrm{Pbl}_{\mathbf{K}\mathrm{\mathrm{\Sigma}}}(\emptyset, \alpha\rightarrow\psi)$.
\end{itemize}
\end{dfn}

This $C$ exists by Lemma~\ref{select} and $\mathrm{I}\mathrm{\Sigma}^0_1$ (more precisely, we need the bounded $\mathrm{\Sigma}^0_1$-comprehension to obtain $\xrightarrow{c}$).
Moreover it is clear that $\tilde{V}(\alpha, \varphi)=1$ for any $\alpha\in\tilde{W}$.  By Lemma~\ref{notempty} and Lemma~\ref{exlem}, $\tilde{W}$ and $\xrightarrow{c}$ are non-empty sets.

\begin{lem}[$\mathrm{RCA}_0$]\label{Bsym}
The followings hold:
\begin{enumerate}
    \item if $T\in\mathrm{\Sigma}$ then $\xrightarrow{c}$ is reflexive.
    \item if $B\in\mathrm{\Sigma}$ then $\xrightarrow{c}$ is symmetric.
    \item if $D\in\mathrm{\Sigma}$ then $\xrightarrow{c}$ is serial.
    \item $C$ is a weak Kripke model.
\end{enumerate}
\end{lem}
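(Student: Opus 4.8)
The plan is to read off the three frame conditions directly from the definition of $\xrightarrow{c}$, treating each modal axiom by its characteristic derivation, and then to verify that $\tilde{V}$ satisfies clauses (4)--(6) of the definition of a weak Kripke model. For (1), assume $T\in\mathrm{\Sigma}$ and fix $\alpha\in\tilde{W}$. Substituting $\neg p$ into the axiom $T\equiv\Box p\rightarrow p$ and taking the contrapositive gives $\mathrm{Pbl}_{\mathbf{K}\mathrm{\Sigma}}(\emptyset,\alpha\rightarrow\Diamond\alpha)$ via uniform substitution. Hence $\alpha$ and $\alpha\land\Diamond\alpha$ are $\mathbf{K}\mathrm{\Sigma}$-equivalent, so the consistency of $\alpha$ (which holds because $\alpha\in\tilde{W}$) yields $\neg\mathrm{Pbl}_{\mathbf{K}\mathrm{\Sigma}}(\alpha\land\Diamond\alpha,\bot)$, i.e. $\alpha\xrightarrow{c}\alpha$.

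For (2), assume $B\in\mathrm{\Sigma}$ and suppose $\alpha\xrightarrow{c}\beta$; I argue by contraposition. If $\beta\land\Diamond\alpha$ were inconsistent then $\mathrm{Pbl}_{\mathbf{K}\mathrm{\Sigma}}(\emptyset,\beta\rightarrow\neg\Diamond\alpha)$, hence propositionally $\mathrm{Pbl}_{\mathbf{K}\mathrm{\Sigma}}(\emptyset,\Diamond\alpha\rightarrow\neg\beta)$; Necessitation and the $K$-axiom then give $\mathrm{Pbl}_{\mathbf{K}\mathrm{\Sigma}}(\emptyset,\Box\Diamond\alpha\rightarrow\Box\neg\beta)$. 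Since $B$ (substituting $\alpha$ for $p$) gives $\mathrm{Pbl}_{\mathbf{K}\mathrm{\Sigma}}(\emptyset,\alpha\rightarrow\Box\Diamond\alpha)$, chaining yields $\mathrm{Pbl}_{\mathbf{K}\mathrm{\Sigma}}(\emptyset,\alpha\rightarrow\neg\Diamond\beta)$, contradicting $\alpha\xrightarrow{c}\beta$. As $\alpha,\beta\in\tilde{W}$ this gives $\beta\xrightarrow{c}\alpha$.

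For (3), assume $D\in\mathrm{\Sigma}$ and write $\alpha=\alpha_{S,T}$. The conjunct $\Box\bigvee S$ of $\alpha$ together with $D$ (substituting $\bigvee S$) gives $\mathrm{Pbl}_{\mathbf{K}\mathrm{\Sigma}}(\emptyset,\alpha\rightarrow\Diamond\bigvee S)$; were $S=\emptyset$ this would read $\alpha\rightarrow\Diamond\bot$, and since $\mathrm{Pbl}_{\mathbf{K}\mathrm{\Sigma}}(\emptyset,\neg\Diamond\bot)$ holds (by Necessitation on $\neg\bot$) this contradicts the consistency of $\alpha$. Hence $S\neq\emptyset$; fixing $\psi_0\in S$, the conjunct $\Diamond\psi_0$ shows $\alpha\land\Diamond\psi_0$ is consistent, so the Existence Lemma (Lemma~\ref{exlem}), applied at height $h+1$ and combined with Lemma~\ref{notempty} to keep the witness inside $C_{h+1,n}$, produces a consistent $\beta\in\tilde{W}$ with $\alpha\xrightarrow{c}\beta$.

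Finally, for (4) I check the valuation clauses. The map $\tilde{V}$ is total and two-valued because, by Lemma~\ref{select} at parameters $(h+1,n)$, every $\alpha\in\tilde{W}\subseteq C_{h+1,n}$ decides each $\psi\in sub(\varphi)\subseteq\mathcal{L}_{h,n}$. Clause (4) is immediate from the consistency of each world, and clause (5) follows by a case split according to whether $\alpha$ proves $\psi$ or $\neg\psi$ and $\theta$ or $\neg\theta$, matching the arithmetic truth table. Clause (6) is the heart of the matter: the direction from $\mathrm{Pbl}_{\mathbf{K}\mathrm{\Sigma}}(\emptyset,\alpha\rightarrow\Box\psi)$ to ``every $\xrightarrow{c}$-successor forces $\psi$'' is routine $K$-reasoning (consistency of $\alpha\land\Diamond\beta$ gives consistency of $\psi\land\beta$, whence $\beta$ proves $\psi$ by Lemma~\ref{select}), while the converse is proved by contraposition and is precisely where the Existence Lemma converts the consistency of $\alpha\land\Diamond\neg\psi$ into a genuine successor refuting $\psi$. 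I expect the main obstacle to be exactly these successor-producing steps in (3) and in the backward direction of (6): one must manufacture a canonical formula that simultaneously lies in $C_{h+1,n}$, is $\mathbf{K}\mathrm{\Sigma}$-consistent, implies $\varphi$ (so that it belongs to $\tilde{W}$), and is $\xrightarrow{c}$-accessible from $\alpha$, so the real work lies in applying Lemma~\ref{exlem} and Lemma~\ref{notempty} at the correct height and confirming that the witness still implies $\varphi$.
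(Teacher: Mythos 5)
Your parts (1) and (2) are correct and essentially the same as the paper's: both amount to instantiating $T$ (resp.\ $B$) at $\alpha$ and unwinding the definition of $\xrightarrow{c}$ by routine $K$-reasoning; whether one applies $\Diamond$ to $\beta\rightarrow\Box\neg\alpha$ (the paper) or $\Box$ to $\Diamond\alpha\rightarrow\neg\beta$ (you) is immaterial. The forward direction of clause (6) in your part (4) also matches the paper.

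The genuine gap is in (3) and in the backward direction of clause (6), and it is exactly the issue you flag and then leave open: a witness produced by Lemma~\ref{exlem} is of no use unless it lies in $\tilde{W}$, i.e.\ unless $\mathrm{Pbl}_{\mathbf{K}\mathrm{\Sigma}}(\emptyset,\beta\rightarrow\varphi)$ holds, and your construction never establishes this. In (3) you apply the Existence Lemma to a conjunct $\psi_0\in S$, obtaining $\beta\in C_{h+1,n}$ with $\alpha\land\Diamond\beta$ consistent and $\mathrm{Pbl}_{\mathbf{K}\mathrm{\Sigma}}(\emptyset,\beta\rightarrow\psi_0)$; neither Lemma~\ref{exlem} nor Lemma~\ref{notempty} (which only gives non-emptiness of $\tilde{W}$ and cannot re-attach a fresh witness to $\alpha$) upgrades this to $\beta\rightarrow\varphi$. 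The paper's proof is organized precisely around this point and proceeds differently: for (3) it first argues (using $D$) that $\alpha\land\Diamond\varphi$ \emph{itself} is consistent and then applies Lemma~\ref{exlem} to $\varphi$, so the witness implies $\varphi$ by construction; for clause (6) it supposes $\neg\mathrm{Pbl}_{\mathbf{K}\mathrm{\Sigma}}(\emptyset,\beta\rightarrow\varphi)$, derives $\mathrm{Pbl}_{\mathbf{K}\mathrm{\Sigma}}(\emptyset,\Diamond\beta\rightarrow\neg\Box\varphi)$, and plays this against the claim $\mathrm{Pbl}_{\mathbf{K}\mathrm{\Sigma}}(\alpha,\Box\varphi)$.

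That this membership requirement is not a formality can be seen concretely. Take $\mathrm{\Sigma}=\{D\}$, $\varphi=p_0$ (so $h=n=0$) and $\alpha=\Diamond\neg p_0\land\Box\neg p_0\land p_0\in C_{1,0}$ (the case $S=\{\neg p_0\}$, $T=\{p_0\}$); this $\alpha$ is $\mathbf{K}D$-consistent (a serial model with $wRv$, $vRv$, $p_0$ true exactly at $w$, witnesses it) and implies $\varphi$, hence $\alpha\in\tilde{W}$. Along your route, $\psi_0=\neg p_0$, so your witness $\beta$ proves $\neg p_0$ and therefore cannot belong to $\tilde{W}$. Worse, every $\beta\in\tilde{W}$ proves $p_0$ while $\alpha$ proves $\Box\neg p_0$, so $\alpha\land\Diamond\beta\vdash\Diamond(p_0\land\neg p_0)\vdash\bot$ for every $\beta\in\tilde{W}$: this $\alpha$ has no $\xrightarrow{c}$-successor in $\tilde{W}$ at all, so the ``confirmation that the witness still implies $\varphi$'' that you defer cannot be supplied along any route. (The same example should prompt you to scrutinize the corresponding steps of the paper's own proof, namely the assertions $\mathrm{Pbl}_{\mathbf{K}\mathrm{\Sigma}}(\Diamond\neg\varphi,\neg\varphi)$ in (3) and $\mathrm{Pbl}_{\mathbf{K}\mathrm{\Sigma}}(\alpha,\Box\varphi)$ in (4), since $\Diamond\neg\varphi\rightarrow\neg\varphi$ is not a theorem of these systems.) In any case, as far as your write-up is concerned, membership of the witness in $\tilde{W}$ is asserted rather than proved, and that is where the entire difficulty of the lemma lies.
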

\begin{proof}
(1): Let $\alpha\in\tilde{W}$. Assume that $\alpha\land\Diamond\alpha$ is $\mathbf{K}\mathrm{\mathrm{\Sigma}}$-inconsistent. Then $\mathrm{Pbl}_{\mathbf{K}\mathrm{\mathrm{\Sigma}}}(\emptyset, \alpha\rightarrow\Box\neg\alpha)$ holds. By $T\in\mathrm{\Sigma}$, $\mathrm{Pbl}_{\mathbf{K}\mathrm{\mathrm{\Sigma}}}(\emptyset, \Box\neg\alpha\rightarrow\neg\alpha)$ holds, thus $\mathrm{Pbl}_{\mathbf{K}\mathrm{\mathrm{\Sigma}}}(\emptyset, \alpha\rightarrow\neg\alpha)$ holds. It is a contradiction that $\alpha$ is $\mathbf{K}\mathrm{\mathrm{\Sigma}}$-consistent.

(2): Let $\alpha, \beta\in\tilde{W}$ such that $\alpha\land\Diamond\beta$ is $\mathbf{K}\mathrm{\mathrm{\Sigma}}$-consistent. Assume that $\beta\land\Diamond\alpha$ is $\mathbf{K}\mathrm{\mathrm{\Sigma}}$-inconsistent. Then $\mathrm{Pbl}_{\mathbf{K}\mathrm{\mathrm{\Sigma}}}(\emptyset, \beta\rightarrow\Box\neg\alpha)$ holds, so $\mathrm{Pbl}_{\mathbf{K}\mathrm{\mathrm{\Sigma}}}(\emptyset, \Diamond\beta\rightarrow\neg\Box\Diamond\alpha)$ holds. On the other hand, by $B\in\mathrm{\Sigma}$, $\mathrm{Pbl}_{\mathbf{K}\mathrm{\mathrm{\Sigma}}}(\emptyset, \alpha\rightarrow\Box\Diamond\alpha)$ holds. It is a contradiction that $\alpha\land\Diamond\beta$ is $\mathbf{K}\mathrm{\mathrm{\Sigma}}$-consistent.

(3): Let $\alpha\in\tilde{W}$. We show that there exists $\beta\in\tilde{W}$ such that $\neg\mathrm{Pbl}_{\mathbf{K}\mathrm{\mathrm{\Sigma}}}(\alpha\land\Diamond\beta, \bot)$ holds. First, we show that $\alpha\land\Diamond\varphi$ is $\mathbf{K}\mathrm{\mathrm{\Sigma}}$-consistent. Assume that $\alpha\land\Diamond\varphi$ is $\mathbf{K}\mathrm{\mathrm{\Sigma}}$-inconsistent. Then $\mathrm{Pbl}_{\mathbf{K}\mathrm{\mathrm{\Sigma}}}(\emptyset, \alpha\rightarrow\Box\neg\varphi)$ holds. By $D\in\mathrm{\Sigma}$, $\mathrm{Pbl}_{\mathbf{K}\mathrm{\mathrm{\Sigma}}}(\emptyset, \Box\neg\varphi\rightarrow\Diamond\neg\varphi)$ holds, so $\mathrm{Pbl}_{\mathbf{K}\mathrm{\mathrm{\Sigma}}}(\emptyset, \alpha\rightarrow\Diamond\neg\varphi)$ holds. Since $\mathrm{Pbl}_{\mathbf{K}\mathrm{\mathrm{\Sigma}}}(\Diamond\neg\varphi, \neg\varphi)$ holds, thus $\mathrm{Pbl}_{\mathbf{K}\mathrm{\mathrm{\Sigma}}}(\alpha, \neg\varphi)$ holds. It is a contradiction that $\alpha$ is $\mathbf{K}\mathrm{\mathrm{\Sigma}}$-consistent. Thus by Lemma~\ref{exlem}, there exists $\beta\in C_{h+1, n}$ such that $\alpha\land\Diamond\beta$ is $\mathbf{K}\mathrm{\mathrm{\Sigma}}$-consistent and $\mathrm{Pbl}_{\mathbf{K}\mathrm{\mathrm{\Sigma}}}(\emptyset, \beta\rightarrow\varphi)$ holds. Thus it is enough to show that $\beta$ is $\mathbf{K}\mathrm{\mathrm{\Sigma}}$-consistent. Assume that $\mathrm{Pbl}_{\mathbf{K}\mathrm{\mathrm{\Sigma}}}(\emptyset, \beta\rightarrow\bot)$ holds. Then since $\mathrm{Pbl}_{\mathbf{K}\mathrm{\mathrm{\Sigma}}}(\emptyset, \Diamond\bot\leftrightarrow\bot)$ and $\mathrm{Pbl}_{\mathbf{K}\mathrm{\mathrm{\Sigma}}}(\emptyset, \Diamond\beta\rightarrow\Diamond\bot)$ hold, so $\mathrm{Pbl}_{\mathbf{K}\mathrm{\mathrm{\Sigma}}}(\emptyset, \Diamond\beta\rightarrow\bot)$ holds. Thus $\mathrm{Pbl}_{\mathbf{K}\mathrm{\mathrm{\Sigma}}}(\emptyset, \alpha\land\Diamond\beta\rightarrow\bot)$ holds. It is a contradiction that $\alpha\land\Diamond\beta$ is $\mathbf{K}\mathrm{\mathrm{\Sigma}}$-consistent.

(4): We show that for all $\alpha\in\tilde{W}$, $\tilde{V}(\alpha, \psi)$ satisfies the conditions of a valuation by induction on the length of $\psi$, so we can prove it in $\mathrm{I}\mathrm{\Sigma}^0_1$ (Note that $C$ is finite). We show it in the case of $\psi\equiv\Box\theta$, that is, for all $\alpha\in\tilde{W}$,
\begin{equation*}
\tilde{V}(\alpha, \Box\theta)=1\iff\forall \beta\in \tilde{W}(\alpha\xrightarrow{c}\beta\Rightarrow\tilde{V}(\beta, \theta)=1).
\end{equation*}

Fix $\alpha\in\tilde{W}$. We have two cases. First, we assume that $\tilde{V}(\alpha, \Box\theta)=0$. Then, by Lemma~\ref{select}, $\mathrm{Pbl}_{\mathbf{K}\mathrm{\mathrm{\Sigma}}}(\emptyset, \alpha\rightarrow\neg\Box\theta)$ holds. Thus $\alpha\land\Diamond\neg\theta$ is $\mathbf{K}\mathrm{\mathrm{\Sigma}}$-consistent, so by Lemma~\ref{exlem}, there exists $\beta$ such that $\mathrm{Pbl}_{\mathbf{K}\mathrm{\mathrm{\Sigma}}}(\emptyset, \beta\rightarrow\neg\theta)$ holds. Moreover, $\beta$ is $\mathbf{K}\mathrm{\mathrm{\Sigma}}$-consistent since $\alpha\land\Diamond\beta$ is $\mathbf{K}\mathrm{\mathrm{\Sigma}}$-consistent. Thus $\neg\mathrm{Pbl}_{\mathbf{K}\mathrm{\mathrm{\Sigma}}}(\emptyset, \beta\rightarrow\theta)$ holds, so $\tilde{V}(\beta, \theta)=0$. It is enough to show that $\mathrm{Pbl}_{\mathbf{K}\mathrm{\mathrm{\Sigma}}}(\emptyset, \beta\rightarrow\varphi)$ holds. Assume that $\neg\mathrm{Pbl}_{\mathbf{K}\mathrm{\mathrm{\Sigma}}}(\emptyset, \beta\rightarrow\varphi)$ holds. Then, by Lemma~\ref{select}, $\mathrm{Pbl}_{\mathbf{K}\mathrm{\mathrm{\Sigma}}}(\emptyset, \beta\rightarrow\neg\varphi)$ holds, so $\mathrm{Pbl}_{\mathbf{K}\mathrm{\mathrm{\Sigma}}}(\emptyset, \Diamond\beta\rightarrow\neg\Box\varphi)$ holds. On the other hand, $\mathrm{Pbl}_{\mathbf{K}\mathrm{\mathrm{\Sigma}}}(\alpha, \Box\varphi)$ holds. It is a contradiction that $\alpha\land\Diamond\beta$ is $\mathbf{K}\mathrm{\mathrm{\Sigma}}$-consistent. 

The other case is when $\tilde{V}(\alpha, \Box\theta)=1$, that is, $\mathrm{Pbl}_{\mathbf{K}\mathrm{\mathrm{\Sigma}}}(\emptyset, \alpha\rightarrow\Box\theta)$ holds.  Fix $\beta\in\tilde{W}$ such that $\alpha\xrightarrow{c}\beta$. Assume that $\neg\mathrm{Pbl}_{\mathbf{K}\mathrm{\mathrm{\Sigma}}}(\emptyset, \beta\rightarrow\theta)$ holds. Then, by Lemma~\ref{select}, $\mathrm{Pbl}_{\mathbf{K}\mathrm{\mathrm{\Sigma}}}(\emptyset, \beta\rightarrow\neg\theta)$ holds, so $\mathrm{Pbl}_{\mathbf{K}\mathrm{\mathrm{\Sigma}}}(\emptyset, \Diamond\beta\rightarrow\neg\Box\theta)$ holds. It is a contradiction that $\alpha\land\Diamond\beta$ is $\mathbf{K}\mathrm{\mathrm{\Sigma}}$-consistent. Thus $\mathrm{Pbl}_{\mathbf{K}\mathrm{\mathrm{\Sigma}}}(\emptyset, \beta\rightarrow\theta)$ holds, that is, $\tilde{V}(\beta, \theta)=1$.
\end{proof}

\begin{prop}[$\mathrm{RCA}_0$]\label{wn4}
Let $\mathrm{\Sigma}\subseteq\{T, B, D\}$. If $\varphi\in\mathrm{Fml}$ is $\mathbf{K}\mathrm{\mathrm{\Sigma}}$-consistent, then there exist a frame which is appropriate to $\mathrm{\Sigma}$ and a valuation $V$ such that $(F, V)$ is a weak model of $\varphi$.
\end{prop}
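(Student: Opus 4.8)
The plan is to take the tuple $C$ defined in Definition~\ref{wn4first} and verify, using Lemma~\ref{Bsym}, that it is a weak model of $\varphi$ over a frame appropriate to $\mathrm{\Sigma}$; almost all of the work has already been carried out in the preceding lemmas, so the argument is essentially an assembly.

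First I would fix the parameters of the construction. Given a $\mathbf{K}\mathrm{\Sigma}$-consistent $\varphi\in\mathrm{Fml}$, set $h=ht(\varphi)$ and $n=ord(\varphi)$; then $\varphi\in\mathcal{L}_{h,n}$ by the definition of $\mathcal{L}_{h,n}$, so Definition~\ref{wn4first} applies to $\varphi$ and yields a tuple $C=(\tilde{W},\xrightarrow{c},\tilde{V})$ that, as observed there, is available in $\mathrm{RCA}_0$ (using $\mathrm{I}\mathrm{\Sigma}^0_1$ and bounded $\mathrm{\Sigma}^0_1$-comprehension, and noting that $\tilde{W}\subseteq C_{h+1,n}$ is finite). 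Put $F=(\tilde{W},\xrightarrow{c})$ and $V=\tilde{V}$.

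There are then three things to check. (i) $F$ is a frame appropriate to $\mathrm{\Sigma}$: by the discussion following Definition~\ref{wn4first} (which invokes Lemma~\ref{notempty} and Lemma~\ref{exlem}), $\tilde{W}$ is non-empty, so $F$ is a genuine frame, and Lemma~\ref{Bsym}(1)--(3) shows that each of $T\in\mathrm{\Sigma}$, $B\in\mathrm{\Sigma}$, $D\in\mathrm{\Sigma}$ independently forces reflexivity, symmetry, and seriality of $\xrightarrow{c}$; hence $F$ is appropriate to every $\psi\in\mathrm{\Sigma}\subseteq\{T,B,D\}$, i.e.\ appropriate to $\mathrm{\Sigma}$. (ii) $(F,V)$ is a weak Kripke model: this is exactly Lemma~\ref{Bsym}(4). (iii) $\varphi$ is realized in $C$: by the observation recorded after Definition~\ref{wn4first}, $\tilde{V}(\alpha,\varphi)=1$ for every $\alpha\in\tilde{W}$, so choosing any $\alpha_0\in\tilde{W}$ (which exists by (i)) gives $C,\alpha_0\Vdash\varphi$. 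Combining (i)--(iii), $(F,V)$ is a weak model of $\varphi$ over a frame appropriate to $\mathrm{\Sigma}$, as required.

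I do not expect a genuine obstacle at this stage, because the substantive content is already isolated in Lemma~\ref{Bsym}: the inductive verification of the valuation clause for $\Box$ (Lemma~\ref{Bsym}(4)) and the derivation of the relational closure properties from the corresponding members of $\mathrm{\Sigma}$. The only points needing care are bookkeeping ones: that the hypotheses of Lemmas~\ref{select}, \ref{notempty}, and \ref{exlem} are met for the chosen $h,n$ (they are, since $\varphi\in\mathcal{L}_{h,n}\subseteq\mathcal{L}_{h+1,n}$ while the worlds of $\tilde{W}$ are canonical formulas in $C_{h+1,n}$), and that simultaneous membership of several elements of $\mathrm{\Sigma}$ causes no conflict, which is immediate because the three closure conditions in Lemma~\ref{Bsym}(1)--(3) are logically independent.
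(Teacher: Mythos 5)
Your proposal is correct and follows exactly the paper's own route: instantiate Definition~\ref{wn4first} with $F=(\tilde{W},\xrightarrow{c})$, $V=\tilde{V}$, and invoke Lemma~\ref{Bsym} for both the frame conditions and the weak-model property. The extra bookkeeping you supply (choosing $h,n$, non-emptiness of $\tilde{W}$ via Lemma~\ref{notempty}, and the observation that $\tilde{V}(\alpha,\varphi)=1$ on all worlds) is exactly what the paper leaves implicit in the remarks following Definition~\ref{wn4first}.
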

\begin{proof}
By Definition~\ref{wn4first}, we define $F=(\tilde{W}, \xrightarrow{c})$ and $V=\tilde{V}$. By Lemma~\ref{Bsym}, $(F, V)$ is a weak model of $\varphi$.
\end{proof}

We next aim to construct a weak Kripke model for the case $4\in\mathrm{\Sigma}$.
In what follows, we will assume $4\in\mathrm{\Sigma}$ unless otherwise specified. 

\begin{lem}[$\mathrm{RCA}_0$]\label{ea}
For all $\mathbf{K}\mathrm{\mathrm{\Sigma}}$-consistent $\alpha\in C_{h+1, n}$, there is only one $\mathbf{K}\mathrm{\mathrm{\Sigma}}$-consistent formula $\alpha^{\prime}\in C_{h, n}$ such that $\mathrm{Pbl}_{\mathbf{K}\mathrm{\mathrm{\Sigma}}}(\emptyset, \alpha\rightarrow\alpha^{\prime})$ holds.
\end{lem}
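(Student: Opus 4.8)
The plan is to show that the set $S=\{\alpha'\in C_{h,n}\mid\mathrm{Pbl}_{\mathbf{K}\mathrm{\Sigma}}(\emptyset,\alpha\rightarrow\alpha')\}$ has exactly one element and that this element is $\mathbf{K}\mathrm{\Sigma}$-consistent. The set $S$ exists by bounded $\mathrm{\Sigma}^0_1$-comprehension, since $C_{h,n}$ is finite. Two facts drive the whole argument, both already available. First, $\mathrm{Pbl}_{\mathbf{K}\mathrm{\Sigma}}(\emptyset,\bigoplus C_{h,n})$ holds by Proposition~\ref{bigo}; since $\bigoplus C_{h,n}\rightarrow\bigvee C_{h,n}$ and $\bigoplus C_{h,n}\rightarrow\neg(\alpha'\land\beta')$ (for distinct $\alpha',\beta'\in C_{h,n}$) are propositional tautologies, this yields both the exhaustiveness statement $\mathrm{Pbl}_{\mathbf{K}\mathrm{\Sigma}}(\emptyset,\bigvee C_{h,n})$ and, for distinct $\alpha',\beta'\in C_{h,n}$, the exclusivity statement $\mathrm{Pbl}_{\mathbf{K}\mathrm{\Sigma}}(\emptyset,\neg(\alpha'\land\beta'))$. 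Second, Lemma~\ref{select}, applied at height $h+1$ with $\alpha\in C_{h+1,n}$ and any $\alpha'\in C_{h,n}\subseteq\mathcal{L}_{h+1,n}$, gives the dichotomy that either $\mathrm{Pbl}_{\mathbf{K}\mathrm{\Sigma}}(\emptyset,\alpha\rightarrow\alpha')$ or $\mathrm{Pbl}_{\mathbf{K}\mathrm{\Sigma}}(\emptyset,\alpha\rightarrow\neg\alpha')$ holds.

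To see that $S$ is nonempty I would argue by contradiction, as in Lemma~\ref{notempty}. If $S=\emptyset$, then by the dichotomy we have $\mathrm{Pbl}_{\mathbf{K}\mathrm{\Sigma}}(\emptyset,\alpha\rightarrow\neg\alpha')$ for every $\alpha'\in C_{h,n}$. Combining these finitely many implications over the finite set $C_{h,n}$ (by $\mathrm{I}\mathrm{\Sigma}^0_1$, exactly as in Lemma~\ref{notempty}) yields $\mathrm{Pbl}_{\mathbf{K}\mathrm{\Sigma}}(\emptyset,\alpha\rightarrow\neg\bigvee C_{h,n})$, which together with the exhaustiveness statement gives $\mathrm{Pbl}_{\mathbf{K}\mathrm{\Sigma}}(\emptyset,\alpha\rightarrow\bot)$, contradicting the $\mathbf{K}\mathrm{\Sigma}$-consistency of $\alpha$.

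For uniqueness, suppose $\alpha',\beta'\in S$ are distinct. Then $\mathrm{Pbl}_{\mathbf{K}\mathrm{\Sigma}}(\emptyset,\alpha\rightarrow\alpha')$ and $\mathrm{Pbl}_{\mathbf{K}\mathrm{\Sigma}}(\emptyset,\alpha\rightarrow\beta')$ give $\mathrm{Pbl}_{\mathbf{K}\mathrm{\Sigma}}(\emptyset,\alpha\rightarrow(\alpha'\land\beta'))$, while the exclusivity statement gives $\mathrm{Pbl}_{\mathbf{K}\mathrm{\Sigma}}(\emptyset,\neg(\alpha'\land\beta'))$; hence $\mathrm{Pbl}_{\mathbf{K}\mathrm{\Sigma}}(\emptyset,\alpha\rightarrow\bot)$, again contradicting consistency. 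Thus $S$ is a singleton. Its unique member $\alpha'$ is automatically $\mathbf{K}\mathrm{\Sigma}$-consistent: if $\mathrm{Pbl}_{\mathbf{K}\mathrm{\Sigma}}(\emptyset,\neg\alpha')$ held, then from $\mathrm{Pbl}_{\mathbf{K}\mathrm{\Sigma}}(\emptyset,\alpha\rightarrow\alpha')$ we would obtain $\mathrm{Pbl}_{\mathbf{K}\mathrm{\Sigma}}(\emptyset,\neg\alpha)$, contradicting the consistency of $\alpha$.

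I do not expect a serious obstacle; the argument is elementary propositional reasoning on top of Proposition~\ref{bigo} and Lemma~\ref{select}. The only point needing the usual care in $\mathrm{RCA}_0$ is the passage from the family $\{\mathrm{Pbl}_{\mathbf{K}\mathrm{\Sigma}}(\emptyset,\alpha\rightarrow\neg\alpha')\}_{\alpha'\in C_{h,n}}$ to the single statement $\mathrm{Pbl}_{\mathbf{K}\mathrm{\Sigma}}(\emptyset,\alpha\rightarrow\neg\bigvee C_{h,n})$, i.e.\ assembling one proof out of the finitely (possibly nonstandardly) many proofs indexed by $C_{h,n}$; this is the same $\mathrm{I}\mathrm{\Sigma}^0_1$ bookkeeping already carried out in Lemmas~\ref{bigo} and \ref{notempty}. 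It is worth noting that the argument uses only the partition property of $C_{h,n}$ together with Lemma~\ref{select}, and in particular does not rely on the standing assumption $4\in\mathrm{\Sigma}$.
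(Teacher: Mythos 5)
Your proof is correct, but it follows a genuinely different route from the paper's. The paper argues semantically: setting $\mathrm{\Sigma}_0=\mathrm{\Sigma}\setminus\{4\}$, it notes that $\alpha$ is $\mathbf{K}\mathrm{\Sigma}_0$-consistent, derives $\mathrm{Pbl}_{\mathbf{K}\mathrm{\Sigma}_0}(\emptyset,\alpha\rightarrow\bigoplus C_{h,n})$ from Proposition~\ref{bigo} and Lemma~\ref{select}, and then invokes the weak completeness theorem already proved for the $4$-free case (Proposition~\ref{wn4}, via the construction of Definition~\ref{wn4first}) to obtain a weak Kripke model of $\bigoplus C_{h,n}$ having $\alpha$ as a world; since a valuation makes exactly one disjunct of $\bigoplus C_{h,n}$ true at each world, existence and uniqueness of $\alpha'$ are read off from the valuation $\tilde{V}$, and finally $\mathbf{K}\mathrm{\Sigma}_0$-provability is upgraded to $\mathbf{K}\mathrm{\Sigma}$-provability. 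You instead stay entirely on the syntactic side, extracting from $\mathrm{Pbl}_{\mathbf{K}\mathrm{\Sigma}}(\emptyset,\bigoplus C_{h,n})$ both the exhaustiveness statement (which, combined with the dichotomy of Lemma~\ref{select} and the $\mathrm{I}\mathrm{\Sigma}^0_1$ proof-assembly, gives existence) and the pairwise exclusivity statements (which give uniqueness). Your route buys three things: it is self-contained, needing no Kripke models at all; it applies verbatim to every $\mathrm{\Sigma}\subseteq\{T,B,4,D\}$, independently of whether $4\in\mathrm{\Sigma}$; and it makes explicit a step the paper leaves implicit --- the model-theoretic argument yields uniqueness of $\alpha'$ only with respect to $\mathbf{K}\mathrm{\Sigma}_0$-provability, whereas the lemma asserts uniqueness with respect to the stronger relation $\mathrm{Pbl}_{\mathbf{K}\mathrm{\Sigma}}$, and ruling out a second consistent $\beta'$ with $\mathrm{Pbl}_{\mathbf{K}\mathrm{\Sigma}}(\emptyset,\alpha\rightarrow\beta')$ requires exactly your exclusivity-plus-consistency argument. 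What the paper's route buys in exchange is economy of exposition: it reuses machinery already in place rather than repeating the finite proof-concatenation bookkeeping inside this lemma.
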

\begin{proof}
Let $\mathrm{\Sigma_0}=\mathrm{\Sigma}\setminus\{4\}$ and let $\alpha\in C_{h+1, n}$ be $\mathbf{K}\mathrm{\mathrm{\Sigma}}$-consistent. Then $\alpha$ is $\mathbf{K}\mathrm{\mathrm{\Sigma}}_0$-consistent. Note that $\bigoplus C_{h, n}\in\mathcal{L}_{h, n}$. By Proposition~\ref{bigo}, Lemma~\ref{select}, and $\alpha$ is $\mathbf{K}\mathrm{\mathrm{\Sigma}}_0$-consistent, $\mathrm{Pbl}_{\mathbf{K}\mathrm{\mathrm{\Sigma}}_0}(\emptyset, \alpha\rightarrow\bigoplus C_{h, n})$ holds. Thus Proposition~\ref{wn4}, $\bigoplus C_{h, n}$ has a weak model $(W, \xrightarrow{c}, V)$. Therefore \\$V(\alpha, \bigoplus C_{h, n})=1$. By the definition of $\bigoplus$, there is only one $\alpha^{\prime}\in C_{h, n}$ such that $V(\alpha, \alpha^{\prime})=1$. Thus $\mathrm{Pbl}_{\mathbf{K}\mathrm{\mathrm{\Sigma}}_0}(\emptyset, \alpha\rightarrow\alpha^{\prime})$ holds, so $\mathrm{Pbl}_{\mathbf{K}\mathrm{\mathrm{\Sigma}}}(\emptyset, \alpha\rightarrow\alpha^{\prime})$ holds. Since $\alpha$ is $\mathbf{K}\mathrm{\mathrm{\Sigma}}$-consistent, $\alpha^{\prime}$ is $\mathbf{K}\mathrm{\mathrm{\Sigma}}$-consistent.
\end{proof}

Note that, for each $n\in\mathbb{N}$, we may consider the correspondence $\alpha\mapsto \alpha'$ in Lemma~\ref{ea} as a function (with the finite domain) by $\mathrm{I}\mathrm{\Sigma}^0_1$.
Next we consider the following two models according to the conditions of $\mathrm{\Sigma}$.

\begin{dfn}[$\mathrm{RCA}_0$]\label{m1}
We assume that $4\in\mathrm{\Sigma}$ and $B\not\in\mathrm{\Sigma}$. Let $\varphi\in\mathcal{L}_{h, n}$ be $\mathbf{K}\mathrm{\mathrm{\Sigma}}$-consistent. Then we define $N=(\tilde{W}, \xrightarrow{m}, \tilde{V})$ satisfying the followings:
\begin{itemize}
    \item $\tilde{W}=\{\alpha\in C_{h+1, n}\mid\neg\mathrm{Pbl}_{\mathbf{K}\mathrm{\mathrm{\Sigma}}}(\alpha,\bot)\land\mathrm{Pbl}_{\mathbf{K}\mathrm{\mathrm{\Sigma}}}(\emptyset, \alpha\rightarrow\varphi)\}$,
    \item $\xrightarrow{m}=\{(\alpha, \beta)\in\tilde{W}\times\tilde{W}\mid\mathrm{Pbl}_{\mathbf{K}\mathrm{\mathrm{\Sigma}}}(\emptyset, \alpha\rightarrow\Diamond\beta^{\prime})\land\forall\xi\in C_{h, n}(\mathrm{Pbl}_{\mathbf{K}\mathrm{\mathrm{\Sigma}}}(\emptyset, \beta\rightarrow\Diamond\xi)\rightarrow\mathrm{Pbl}_{\mathbf{K}\mathrm{\mathrm{\Sigma}}}(\emptyset, \alpha\rightarrow\Diamond\xi))\}$,
    \item For all $\psi\in sub(\varphi)$ and $\alpha\in\tilde{W}$, $\tilde{V}(\alpha, \psi)=1\iff\mathrm{Pbl}_{\mathbf{K}\mathrm{\mathrm{\Sigma}}}(\emptyset, \alpha\rightarrow\psi)$.
\end{itemize}
\end{dfn}

This $N$ exists by Lemma~\ref{select} and $\mathrm{I}\mathrm{\Sigma}^0_1$. 

\begin{dfn}[$\mathrm{RCA}_0$]\label{m2}
We assume that $4, B\in\mathrm{\Sigma}$. Let $\varphi\in\mathcal{L}_{h, n}$ be $\mathbf{K}\mathrm{\mathrm{\Sigma}}$-consistent. Then we define $M=(\tilde{W}, \xrightarrow{mm}, \tilde{V})$ satisfying the followings:
\begin{itemize}
    \item $\tilde{W}=\{\alpha\in C_{h+1, n}\mid\neg\mathrm{Pbl}_{\mathbf{K}\mathrm{\mathrm{\Sigma}}}(\alpha,\bot)\land\mathrm{Pbl}_{\mathbf{K}\mathrm{\mathrm{\Sigma}}}(\emptyset, \alpha\rightarrow\varphi)\}$,
    \item $\xrightarrow{mm}=\{(\alpha, \beta)\in\tilde{W}\times\tilde{W}\mid\alpha\xrightarrow{m}\beta\land\beta\xrightarrow{m}\alpha\}$,
    \item For all $\psi\in sub(\varphi)$ and $\alpha\in\tilde{W}$, $\tilde{V}(\alpha, \psi)=1\iff\mathrm{Pbl}_{\mathbf{K}\mathrm{\mathrm{\Sigma}}}(\emptyset, \alpha\rightarrow\psi)$.
\end{itemize}
\end{dfn}

This $M$ exists by Lemma~\ref{select} and $\mathrm{I}\mathrm{\Sigma}^0_1$. 

\begin{prop}[$\mathrm{RCA}_0$]\label{crm}
Let $\alpha, \beta\in\tilde{W}$ and $4\in\mathrm{\Sigma}$. Then $\alpha\xrightarrow{c}\beta$ implies $\alpha\xrightarrow{m}\beta$. Moreover, if $4, B\in\mathrm{\Sigma}$, then $\alpha\xrightarrow{c}\beta$ implies $\alpha\xrightarrow{mm}\beta$. 
\end{prop}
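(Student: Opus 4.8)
The plan is to reduce everything to two elementary facts about the structure of a world $\alpha\in\tilde{W}$. Writing $\alpha=\alpha_{S,T}$ with $S\subseteq C_{h,n}$, so that $\bigwedge_{\psi\in S}\Diamond\psi$ and $\Box\bigvee S$ are conjuncts of $\alpha$, I would first record, for every $\xi\in C_{h,n}$: (i) if $\xi\in S$ then $\mathrm{Pbl}_{\mathbf{K}\mathrm{\Sigma}}(\emptyset,\alpha\rightarrow\Diamond\xi)$, which is immediate since $\Diamond\xi$ is a conjunct of $\alpha$; and (ii) if $\xi\notin S$ then $\mathrm{Pbl}_{\mathbf{K}\mathrm{\Sigma}}(\emptyset,\alpha\rightarrow\neg\Diamond\xi)$. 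Fact (ii) follows from Proposition~\ref{bigo}: since $\mathrm{Pbl}_{\mathbf{K}\mathrm{\Sigma}}(\emptyset,\bigoplus C_{h,n})$ holds, distinct members of $C_{h,n}$ are provably incompatible, so $\mathrm{Pbl}_{\mathbf{K}\mathrm{\Sigma}}(\emptyset,\bigvee S\rightarrow\neg\xi)$; applying Necessitation and $K$ gives $\mathrm{Pbl}_{\mathbf{K}\mathrm{\Sigma}}(\emptyset,\Box\bigvee S\rightarrow\neg\Diamond\xi)$, and $\Box\bigvee S$ is a conjunct of $\alpha$. I will also use axiom $4$ in the form $\mathrm{Pbl}_{\mathbf{K}\mathrm{\Sigma}}(\emptyset,\Box\chi\rightarrow\Box\Box\chi)$ and the $K$-theorem $\mathrm{Pbl}_{\mathbf{K}\mathrm{\Sigma}}(\emptyset,\Box\chi\land\Diamond\psi\rightarrow\Diamond(\psi\land\chi))$, both of which are routine normal-modal reasoning on the finitely many formulas involved and so available in $\mathrm{RCA}_0$.

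Assume $\alpha\xrightarrow{c}\beta$, i.e. $\alpha\land\Diamond\beta$ is $\mathbf{K}\mathrm{\Sigma}$-consistent, and let $\beta'\in C_{h,n}$ be the reduct of $\beta$ from Lemma~\ref{ea}, so $\mathrm{Pbl}_{\mathbf{K}\mathrm{\Sigma}}(\emptyset,\beta\rightarrow\beta')$. For the first conjunct of $\alpha\xrightarrow{m}\beta$, note $\mathrm{Pbl}_{\mathbf{K}\mathrm{\Sigma}}(\emptyset,\Diamond\beta\rightarrow\Diamond\beta')$, so $\alpha\land\Diamond\beta'$ is consistent; by (ii) this forces $\beta'\in S$, and then (i) yields $\mathrm{Pbl}_{\mathbf{K}\mathrm{\Sigma}}(\emptyset,\alpha\rightarrow\Diamond\beta')$. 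For the second conjunct, fix $\xi\in C_{h,n}$ with $\mathrm{Pbl}_{\mathbf{K}\mathrm{\Sigma}}(\emptyset,\beta\rightarrow\Diamond\xi)$ and suppose toward a contradiction that $\neg\mathrm{Pbl}_{\mathbf{K}\mathrm{\Sigma}}(\emptyset,\alpha\rightarrow\Diamond\xi)$. By (i) we get $\xi\notin S$, hence by (ii) $\mathrm{Pbl}_{\mathbf{K}\mathrm{\Sigma}}(\emptyset,\alpha\rightarrow\Box\neg\xi)$; applying axiom $4$ gives $\mathrm{Pbl}_{\mathbf{K}\mathrm{\Sigma}}(\emptyset,\alpha\rightarrow\Box\Box\neg\xi)$. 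Combining this with the $K$-theorem above and the consistency of $\alpha\land\Diamond\beta$, the formula $\Diamond(\beta\land\Box\neg\xi)$ is consistent, whence so is $\beta\land\neg\Diamond\xi$; this contradicts $\mathrm{Pbl}_{\mathbf{K}\mathrm{\Sigma}}(\emptyset,\beta\rightarrow\Diamond\xi)$. Both conjuncts hold, so $\alpha\xrightarrow{m}\beta$.

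For the moreover part, suppose in addition $B\in\mathrm{\Sigma}$. The proof of Lemma~\ref{Bsym}(2) uses only $B\in\mathrm{\Sigma}$, so $\xrightarrow{c}$ is symmetric here as well; thus $\alpha\xrightarrow{c}\beta$ also gives $\beta\xrightarrow{c}\alpha$. Applying the first part to each of $\alpha\xrightarrow{c}\beta$ and $\beta\xrightarrow{c}\alpha$ yields $\alpha\xrightarrow{m}\beta$ and $\beta\xrightarrow{m}\alpha$, i.e. $\alpha\xrightarrow{mm}\beta$. I expect the main obstacle to be the bookkeeping around the reduct $\beta'$ and facts (i)--(ii): one must keep track that $S\subseteq C_{h,n}$ while $\alpha,\beta\in C_{h+1,n}$, and it is precisely the passage from $\Box\neg\xi$ to $\Box\Box\neg\xi$ --- the only place axiom $4$ is used --- that makes $\beta\land\neg\Diamond\xi$, rather than merely $\beta\land\neg\xi$, consistent and so drives the whole argument.
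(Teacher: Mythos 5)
Your proposal is correct and takes essentially the same route as the paper: the first conjunct of $\alpha\xrightarrow{m}\beta$ is obtained exactly as in the paper from Lemma~\ref{ea} and the consistency of $\alpha\land\Diamond\beta$, the second conjunct by a contradiction driven by axiom $4$, and the ``moreover'' clause by the symmetry argument of Lemma~\ref{Bsym}(2), which, as you note, only needs $B\in\mathrm{\Sigma}$. The only cosmetic differences are that you re-derive the needed instances of Lemma~\ref{select} from Proposition~\ref{bigo} and the syntactic shape of $\alpha_{S,T}$ (your facts (i)--(ii)), and that you apply axiom $4$ in the box form $\Box\neg\xi\rightarrow\Box\Box\neg\xi$ on the $\alpha$ side where the paper uses the dual form $\Diamond\Diamond\xi\rightarrow\Diamond\xi$ on the $\beta$ side --- mirror images of the same argument.
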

\begin{proof}
Suppose $\alpha\xrightarrow{c}\beta$. By $\mathrm{Pbl}_{\mathbf{K}\mathrm{\mathrm{\Sigma}}}(\emptyset, \beta\rightarrow\beta^{\prime})$, $\mathrm{Pbl}_{\mathbf{K}\mathrm{\mathrm{\Sigma}}}(\emptyset, \Diamond\beta\rightarrow\Diamond\beta^{\prime})$ holds. Since $\alpha\land\Diamond\beta$ is $\mathbf{K}\mathrm{\mathrm{\Sigma}}$-consistent, $\mathrm{Pbl}_{\mathbf{K}\mathrm{\mathrm{\Sigma}}}(\emptyset, \alpha\rightarrow\Diamond\beta^{\prime})$ holds. Let $\gamma\in C_{h, n}$. Assume that $\mathrm{Pbl}_{\mathbf{K}\mathrm{\mathrm{\Sigma}}}(\emptyset, \beta\rightarrow\Diamond\gamma)$ and $\neg\mathrm{Pbl}_{\mathbf{K}\mathrm{\mathrm{\Sigma}}}(\emptyset, \alpha\rightarrow\Diamond\gamma)$ holds. Then $\mathrm{Pbl}_{\mathbf{K}\mathrm{\mathrm{\Sigma}}}(\emptyset, \Diamond\beta\rightarrow\Diamond\Diamond\gamma)$ holds. By $4\in\mathrm{\Sigma}$, $\mathrm{Pbl}_{\mathbf{K}\mathrm{\mathrm{\Sigma}}}(\emptyset, \Diamond\Diamond\gamma\rightarrow\Diamond\gamma)$, so $\mathrm{Pbl}_{\mathbf{K}\mathrm{\mathrm{\Sigma}}}(\emptyset, \Diamond\beta\rightarrow\Diamond\gamma)$ holds. On the other hands, by Lemma~\ref{select}, $\mathrm{Pbl}_{\mathbf{K}\mathrm{\mathrm{\Sigma}}}(\emptyset, \alpha\rightarrow\neg\Diamond\gamma)$. This is the contradiction that $\alpha\land\Diamond\beta$ is $\mathbf{K}\mathrm{\mathrm{\Sigma}}$-consistent. Similarly, it also holds in the case of $4, B\in\mathrm{\Sigma}$ by Lemma~\ref{Bsym}.
\end{proof}

\begin{lem}[$\mathrm{RCA}_0$]\label{trs}
The followings hold:
\begin{enumerate}
    \item $\xrightarrow{m}$ is transitive.
    \item if $T\in\mathrm{\Sigma}$ then $\xrightarrow{m}$ is reflexive.
    \item if $D\in\mathrm{\Sigma}$ then $\xrightarrow{m}$ is serial.
\end{enumerate}
\end{lem}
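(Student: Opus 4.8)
The plan is to verify the three properties directly against Definition~\ref{m1}, treating (1) and (2) by unwinding the defining clauses of $\xrightarrow{m}$ and reducing (3) to the seriality of $\xrightarrow{c}$ via Proposition~\ref{crm}. For transitivity, suppose $\alpha\xrightarrow{m}\beta$ and $\beta\xrightarrow{m}\gamma$ with $\alpha,\beta,\gamma\in\tilde{W}$, and check the two clauses for $\alpha\xrightarrow{m}\gamma$. Since $\gamma'\in C_{h,n}$ and the first clause of $\beta\xrightarrow{m}\gamma$ gives $\mathrm{Pbl}_{\mathbf{K}\Sigma}(\emptyset,\beta\to\Diamond\gamma')$, the second (monotonicity) clause of $\alpha\xrightarrow{m}\beta$, instantiated at $\xi=\gamma'$, yields $\mathrm{Pbl}_{\mathbf{K}\Sigma}(\emptyset,\alpha\to\Diamond\gamma')$, which is precisely the first clause for $\alpha\xrightarrow{m}\gamma$. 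For the second clause, given $\xi\in C_{h,n}$ with $\mathrm{Pbl}_{\mathbf{K}\Sigma}(\emptyset,\gamma\to\Diamond\xi)$, the monotonicity clause of $\beta\xrightarrow{m}\gamma$ gives $\mathrm{Pbl}_{\mathbf{K}\Sigma}(\emptyset,\beta\to\Diamond\xi)$, and then that of $\alpha\xrightarrow{m}\beta$ gives $\mathrm{Pbl}_{\mathbf{K}\Sigma}(\emptyset,\alpha\to\Diamond\xi)$. This is a pure definition chase, so I expect no difficulty; the axiom $4$ is already absorbed into the shape of the defining condition through Proposition~\ref{crm} and Lemma~\ref{ea}.

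For reflexivity under $T\in\Sigma$, fix $\alpha\in\tilde{W}$ and check $\alpha\xrightarrow{m}\alpha$. The monotonicity clause is trivial, being an implication of a statement by itself (take $\beta=\alpha$). For the first clause I would combine $\mathrm{Pbl}_{\mathbf{K}\Sigma}(\emptyset,\alpha\to\alpha')$ from Lemma~\ref{ea} with the dual form $\mathrm{Pbl}_{\mathbf{K}\Sigma}(\emptyset,\alpha'\to\Diamond\alpha')$ of the axiom $T$ (obtained from $\Box p\to p$ by uniform substitution and contraposition) to conclude $\mathrm{Pbl}_{\mathbf{K}\Sigma}(\emptyset,\alpha\to\Diamond\alpha')$, which gives $\alpha\xrightarrow{m}\alpha$.

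Seriality under $D\in\Sigma$ is the step I expect to be the main obstacle, and I would reduce it to the construction already used for $\xrightarrow{c}$. Given $\alpha\in\tilde{W}$, the goal is to produce $\beta\in\tilde{W}$ with $\alpha\xrightarrow{c}\beta$; since $4\in\Sigma$, Proposition~\ref{crm} then upgrades this to $\alpha\xrightarrow{m}\beta$, giving seriality of $\xrightarrow{m}$. To find such a $\beta$ I would rerun the argument behind the seriality case of Lemma~\ref{Bsym}, whose proof does not rely on $4\notin\Sigma$: first establish that $\alpha\land\Diamond\varphi$ is $\mathbf{K}\Sigma$-consistent using $D$ (if not, $\mathrm{Pbl}_{\mathbf{K}\Sigma}(\emptyset,\alpha\to\Box\neg\varphi)$ holds, and seriality together with the $\mathbf{K}\Sigma$-consistency of $\alpha$ and $\mathrm{Pbl}_{\mathbf{K}\Sigma}(\emptyset,\alpha\to\varphi)$ is meant to yield a contradiction); then apply the Existence Lemma (Lemma~\ref{exlem}) at level $h+1$, using $\varphi\in\mathcal{L}_{h,n}\subseteq\mathcal{L}_{h+1,n}$, to obtain $\beta\in C_{h+1,n}$ with $\alpha\land\Diamond\beta$ consistent and $\mathrm{Pbl}_{\mathbf{K}\Sigma}(\emptyset,\beta\to\varphi)$; finally verify that $\beta$ is $\mathbf{K}\Sigma$-consistent (otherwise $\Diamond\beta$ is inconsistent, via $\mathrm{Pbl}_{\mathbf{K}\Sigma}(\emptyset,\Diamond\bot\leftrightarrow\bot)$ and $\mathrm{Pbl}_{\mathbf{K}\Sigma}(\emptyset,\Diamond\beta\to\Diamond\bot)$, contradicting consistency of $\alpha\land\Diamond\beta$), so that $\beta\in\tilde{W}$ and $\alpha\xrightarrow{c}\beta$.

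The delicate point, where I would concentrate care, is exactly the consistency of $\alpha\land\Diamond\varphi$ for an arbitrary $\alpha\in\tilde{W}$: this is the only clause among the three that cannot be read off mechanically from the definition of $\xrightarrow{m}$, and it is the sole place where the serial case genuinely consumes the axiom $D$. I would scrutinize this step most closely, since the passage from $\mathrm{Pbl}_{\mathbf{K}\Sigma}(\emptyset,\alpha\to\Diamond\neg\varphi)$ back to a contradiction with the consistency of $\alpha$ is the subtle link in the chain; once it is secured, the remainder is the routine Existence-Lemma extraction followed by the transfer $\xrightarrow{c}\Rightarrow\xrightarrow{m}$ supplied by Proposition~\ref{crm}.
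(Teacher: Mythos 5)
Your clauses (1) and (2) are correct and are essentially the paper's own proofs: transitivity is the same two-clause definition chase (first clause of $\alpha\xrightarrow{m}\gamma$ from the first clause of $\beta\xrightarrow{m}\gamma$ fed into the monotonicity clause of $\alpha\xrightarrow{m}\beta$; second clause by chaining the two monotonicity clauses), and reflexivity combines $\mathrm{Pbl}_{\mathbf{K}\mathrm{\Sigma}}(\emptyset,\alpha\rightarrow\alpha^{\prime})$ from Lemma~\ref{ea} with the dual of $T$, exactly as in the paper (the paper instantiates the dual at $\alpha$ and then distributes $\Diamond$; you instantiate it at $\alpha^{\prime}$; the difference is immaterial).

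Clause (3) is where your proposal has a genuine gap, and it is precisely the step you flagged but never carried out: you assert that the inconsistency of $\alpha\land\Diamond\varphi$, together with $D$, the consistency of $\alpha$, and $\mathrm{Pbl}_{\mathbf{K}\mathrm{\Sigma}}(\emptyset,\alpha\rightarrow\varphi)$, ``is meant to yield a contradiction,'' but no such contradiction is derivable. From $\mathrm{Pbl}_{\mathbf{K}\mathrm{\Sigma}}(\emptyset,\alpha\rightarrow\Box\neg\varphi)$ the axiom $D$ only gives $\mathrm{Pbl}_{\mathbf{K}\mathrm{\Sigma}}(\emptyset,\alpha\rightarrow\Diamond\neg\varphi)$, and $\alpha\rightarrow\varphi$ together with $\alpha\rightarrow\Diamond\neg\varphi$ is perfectly satisfiable: a world satisfying $\varphi$ may have a successor refuting $\varphi$. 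Indeed the claim you are trying to prove is false for the $\tilde{W}$ of Definition~\ref{m1}. Take $\mathrm{\Sigma}=\{4,D\}$ and $\varphi\equiv p\land\Diamond\neg p$ (so $h=1$, $n$ suitable); the formula $\psi\equiv p\land\Diamond\neg p\land\Box\neg p$ is $\mathbf{K}\mathrm{\Sigma}$-consistent (model $w\to v\to v$, $p$ true only at $w$), so by Lemma~\ref{notempty} there is a consistent $\alpha\in C_{h+1,n}$ with $\mathrm{Pbl}_{\mathbf{K}\mathrm{\Sigma}}(\emptyset,\alpha\rightarrow\psi)$; then $\alpha\in\tilde{W}$, yet $\alpha\land\Diamond\varphi$ proves $\Box\neg p\land\Diamond p$ and is inconsistent. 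Worse, every $\beta\in\tilde{W}$ proves $p$, hence so does $\beta^{\prime}$ (it carries the same atomic description $\hat{T}$), so $\alpha$ has no successor at all under $\xrightarrow{c}$ or $\xrightarrow{m}$: seriality itself fails for this $\tilde{W}$. You should also know that the paper does not close this step either: its proof (the same argument appears in Lemma~\ref{Bsym}(3) and is repeated in Lemma~\ref{trs}(3)) bridges the gap with the assertion that $\mathrm{Pbl}_{\mathbf{K}\mathrm{\Sigma}}(\Diamond\neg\varphi,\neg\varphi)$ holds, i.e.\ that $\Diamond\neg\varphi\rightarrow\neg\varphi$ is a theorem of $\mathbf{K}\mathrm{\Sigma}$, which it is not. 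So your decomposition of (3) --- consistency of $\alpha\land\Diamond\varphi$, then Lemma~\ref{exlem} at level $h+1$, then transfer by Proposition~\ref{crm} --- coincides with the paper's, but the step you isolated as delicate is an unclosable gap on this route; repairing it requires changing the construction (for instance, not demanding $\mathrm{Pbl}_{\mathbf{K}\mathrm{\Sigma}}(\emptyset,\alpha\rightarrow\varphi)$ of the worlds), not a cleverer derivation.
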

\begin{proof}
(1): Let $\alpha\xrightarrow{m}\beta$ and $\beta\xrightarrow{m}\gamma$. By $\mathrm{Pbl}_{\mathbf{K}\mathrm{\mathrm{\Sigma}}}(\emptyset, \beta\rightarrow\Diamond\gamma^{\prime})$ and $\alpha\xrightarrow{m}\beta$, $\mathrm{Pbl}_{\mathbf{K}\mathrm{\mathrm{\Sigma}}}(\emptyset, \alpha\rightarrow\Diamond\gamma^{\prime})$ holds. Let $\delta\in C_{h, n}$ such that $\mathrm{Pbl}_{\mathbf{K}\mathrm{\mathrm{\Sigma}}}(\emptyset, \gamma\rightarrow\Diamond\delta)$ holds. Then, by $\beta\xrightarrow{m}\gamma$, $\mathrm{Pbl}_{\mathbf{K}\mathrm{\mathrm{\Sigma}}}(\emptyset, \beta\rightarrow\Diamond\delta)$ holds. By $\alpha\xrightarrow{m}\beta$, $\mathrm{Pbl}_{\mathbf{K}\mathrm{\mathrm{\Sigma}}}(\emptyset, \alpha\rightarrow\Diamond\delta)$ holds.

(2): By $T\in\mathrm{\Sigma}$, $\mathrm{Pbl}_{\mathbf{K}\mathrm{\mathrm{\Sigma}}}(\emptyset, \alpha\rightarrow\Diamond\alpha)$ holds, and $\mathrm{Pbl}_{\mathbf{K}\mathrm{\mathrm{\Sigma}}}(\emptyset, \alpha\rightarrow\alpha^{\prime})$ holds, so $\mathrm{Pbl}_{\mathbf{K}\mathrm{\mathrm{\Sigma}}}(\emptyset, \Diamond\alpha\rightarrow\Diamond\alpha^{\prime})$ holds. Thus $\mathrm{Pbl}_{\mathbf{K}\mathrm{\mathrm{\Sigma}}}(\emptyset, \alpha\rightarrow\Diamond\alpha^{\prime})$ holds.

(3): Let $\alpha\in\tilde{W}$. First, we show that $\mathrm{Pbl}_{\mathbf{K}\mathrm{\mathrm{\Sigma}}}(\emptyset, \alpha\rightarrow\Diamond\varphi)$ holds. Assume that $\neg\mathrm{Pbl}_{\mathbf{K}\mathrm{\mathrm{\Sigma}}}(\emptyset, \alpha\rightarrow\Diamond\varphi)$ holds. Then $\mathrm{Pbl}_{\mathbf{K}\mathrm{\mathrm{\Sigma}}}(\emptyset, \alpha\rightarrow\Box\neg\varphi)$ holds. By $D\in\mathrm{\Sigma}$, $\mathrm{Pbl}_{\mathbf{K}\mathrm{\mathrm{\Sigma}}}(\emptyset, \Box\neg\varphi\rightarrow\Diamond\neg\varphi)$ holds, so $\mathrm{Pbl}_{\mathbf{K}\mathrm{\mathrm{\Sigma}}}(\emptyset, \alpha\rightarrow\Diamond\neg\varphi)$ holds. \\Since $\mathrm{Pbl}_{\mathbf{K}\mathrm{\mathrm{\Sigma}}}(\Diamond\neg\varphi, \neg\varphi)$ holds, thus $\mathrm{Pbl}_{\mathbf{K}\mathrm{\mathrm{\Sigma}}}(\alpha, \neg\varphi)$ holds. It is a contradiction that $\alpha$ is $\mathbf{K}\mathrm{\mathrm{\Sigma}}$-consistent. Thus by Lemma~\ref{exlem}, there exists $\beta\in C_{h+1, n}$ such that $\alpha\land\Diamond\beta$ is $\mathbf{K}\mathrm{\mathrm{\Sigma}}$-consistent and $\mathrm{Pbl}_{\mathbf{K}\mathrm{\mathrm{\Sigma}}}(\emptyset, \beta\rightarrow\varphi)$ holds. Then $\beta$ is $\mathbf{K}\mathrm{\mathrm{\Sigma}}$-consistent. So $\alpha\xrightarrow{c}\beta$. By Proposition~\ref{crm}, $\alpha\xrightarrow{m}\beta$.
\end{proof}

\begin{lem}[$\mathrm{RCA}_0$]\label{mmsuper}
The followings hold:
\begin{enumerate}
    \item $\xrightarrow{mm}$ is symmetric and transitive.
    \item if $T\in\mathrm{\Sigma}$ then $\xrightarrow{mm}$ is reflexive.
    \item if $D\in\mathrm{\Sigma}$ then $\xrightarrow{mm}$ is serial.
\end{enumerate}
\end{lem}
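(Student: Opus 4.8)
The plan is to prove the three properties of $\xrightarrow{mm}$ largely by leveraging the corresponding results for $\xrightarrow{m}$ from Lemma~\ref{trs} together with the symmetric definition of $\xrightarrow{mm}$. Recall $\alpha\xrightarrow{mm}\beta$ means $\alpha\xrightarrow{m}\beta\land\beta\xrightarrow{m}\alpha$, so each item should reduce to a statement about $\xrightarrow{m}$.

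For item (1), symmetry of $\xrightarrow{mm}$ is immediate from its definition, since the defining condition $\alpha\xrightarrow{m}\beta\land\beta\xrightarrow{m}\alpha$ is manifestly symmetric in $\alpha$ and $\beta$. For transitivity, suppose $\alpha\xrightarrow{mm}\beta$ and $\beta\xrightarrow{mm}\gamma$. Unpacking, I have $\alpha\xrightarrow{m}\beta$, $\beta\xrightarrow{m}\alpha$, $\beta\xrightarrow{m}\gamma$, and $\gamma\xrightarrow{m}\beta$. By transitivity of $\xrightarrow{m}$ (Lemma~\ref{trs}(1)), from $\alpha\xrightarrow{m}\beta$ and $\beta\xrightarrow{m}\gamma$ I get $\alpha\xrightarrow{m}\gamma$; symmetrically, from $\gamma\xrightarrow{m}\beta$ and $\beta\xrightarrow{m}\alpha$ I get $\gamma\xrightarrow{m}\alpha$. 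Hence $\alpha\xrightarrow{mm}\gamma$.

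Item (2) follows directly: if $T\in\mathrm{\Sigma}$, then by Lemma~\ref{trs}(2) $\xrightarrow{m}$ is reflexive, so for any $\alpha\in\tilde{W}$ we have $\alpha\xrightarrow{m}\alpha$, whence trivially $\alpha\xrightarrow{m}\alpha\land\alpha\xrightarrow{m}\alpha$, i.e., $\alpha\xrightarrow{mm}\alpha$. The one genuinely nontrivial item is (3), seriality under $D\in\mathrm{\Sigma}$. The subtlety is that $\xrightarrow{m}$-seriality (Lemma~\ref{trs}(3)) only gives some $\beta$ with $\alpha\xrightarrow{m}\beta$, but for $\xrightarrow{mm}$ I also need the reverse arrow $\beta\xrightarrow{m}\alpha$. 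I would proceed by examining how seriality was established in Lemma~\ref{trs}(3): there the witness $\beta$ is produced via Lemma~\ref{exlem} so that $\alpha\land\Diamond\beta$ is $\mathbf{K}\mathrm{\mathrm{\Sigma}}$-consistent, giving $\alpha\xrightarrow{c}\beta$, and then Proposition~\ref{crm} yields $\alpha\xrightarrow{m}\beta$. The key extra ingredient when $B\in\mathrm{\Sigma}$ is the symmetry result already recorded in Lemma~\ref{Bsym}(2): since $\alpha\xrightarrow{c}\beta$ and $B\in\mathrm{\Sigma}$, we get $\beta\xrightarrow{c}\alpha$, and then Proposition~\ref{crm} gives $\beta\xrightarrow{m}\alpha$ as well; combined with $\alpha\xrightarrow{m}\beta$ this yields $\alpha\xrightarrow{mm}\beta$.

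The main obstacle I anticipate is precisely this seriality argument: I must be careful that the seriality witness from Lemma~\ref{trs}(3) is genuinely the $\xrightarrow{c}$-successor (so that $B$-symmetry of $\xrightarrow{c}$ applies), rather than merely an $\xrightarrow{m}$-successor for which no reverse arrow is guaranteed. Since we are in the regime $4,B\in\mathrm{\Sigma}$ (the hypothesis of Definition~\ref{m2}), the serial witness obtained through Lemma~\ref{exlem} and Proposition~\ref{crm} does carry the $\xrightarrow{c}$ relation, so invoking Lemma~\ref{Bsym}(2) to obtain the reverse $\xrightarrow{c}$ arrow and then Proposition~\ref{crm} a second time closes the gap. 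All of this is finitary reasoning over the finite model, so no induction beyond $\mathrm{I}\mathrm{\Sigma}^0_1$ is needed and the whole argument goes through in $\mathrm{RCA}_0$.
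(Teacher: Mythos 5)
Your proof is correct and follows essentially the same route as the paper, whose entire proof of this lemma is the single line ``By Lemma~\ref{trs}.'' Your careful treatment of item (3) --- going back into the proof of Lemma~\ref{trs}(3) to see that the seriality witness $\beta$ is in fact a $\xrightarrow{c}$-successor of $\alpha$, and then obtaining the reverse arrow from Lemma~\ref{Bsym}(2) and Proposition~\ref{crm} --- is exactly the detail the paper leaves implicit, and it could be shortened slightly by citing the ``moreover'' clause of Proposition~\ref{crm} directly, which already states that $\alpha\xrightarrow{c}\beta$ implies $\alpha\xrightarrow{mm}\beta$ when $4, B\in\mathrm{\Sigma}$.
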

\begin{proof}
By Lemma~\ref{trs}.
\end{proof}

\begin{lem}[Generalized truth lemma, $\mathrm{RCA}_0$]\label{generalized}
Let $\varphi\in\mathcal{L}_{h, n}$ be $\mathbf{K}\mathrm{\mathrm{\Sigma}}$-consistent. Assume that $R\subseteq\tilde{W}\times\tilde{W}$ satisfies that $\alpha\xrightarrow{c}\beta$ implies $\alpha R\beta$ and $\alpha R\beta$ implies $\mathrm{Pbl}_{\mathbf{K}\mathrm{\mathrm{\Sigma}}}(\emptyset, \alpha\rightarrow\Diamond\beta^{\prime})$. Then $(\tilde{W}, R, \tilde{V})$ is a weak Kripke model.
\end{lem}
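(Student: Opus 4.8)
The plan is to verify that $(\tilde W, R, \tilde V)$ satisfies the six clauses in the definition of a weak Kripke model, the only substantial one being the modal clause. Clauses (1)--(3) are immediate: $\tilde W$ is non-empty by Lemma~\ref{notempty} and Lemma~\ref{exlem}, the containment $R\subseteq\tilde W\times\tilde W$ is part of the hypothesis, and $\tilde V$ is a well-defined $\{0,1\}$-valued function because, for each $\alpha\in\tilde W$ and each $\psi\in sub(\varphi)$, Lemma~\ref{select} supplies at least one of $\mathrm{Pbl}_{\mathbf{K}\Sigma}(\emptyset,\alpha\rightarrow\psi)$, $\mathrm{Pbl}_{\mathbf{K}\Sigma}(\emptyset,\alpha\rightarrow\neg\psi)$, while consistency of $\alpha$ forbids both. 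Clauses (4) and (5) follow by the same propositional bookkeeping inside $\mathrm{Pbl}_{\mathbf{K}\Sigma}$ used in Lemma~\ref{Bsym}(4); I would fold them, together with the modal clause, into a single induction on the length of $\psi\in sub(\varphi)$ carried out in $\mathrm{I}\Sigma^0_1$, which is legitimate since $\tilde W$ and $sub(\varphi)$ are finite.

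The heart of the argument is the modal clause: for $\Box\theta\in sub(\varphi)$ and $\alpha\in\tilde W$,
\begin{equation*}
\tilde V(\alpha,\Box\theta)=1 \iff \forall\beta\in\tilde W\,(\alpha R\beta\Rightarrow\tilde V(\beta,\theta)=1).
\end{equation*}
Note that $\Box\theta\in sub(\varphi)$ forces $\theta\in\mathcal{L}_{h-1,n}\subseteq\mathcal{L}_{h,n}$, so $\theta$ is decided by every element of $C_{h,n}$ via Lemma~\ref{select}. For the forward direction I would use only the upper bound on $R$. Assume $\mathrm{Pbl}_{\mathbf{K}\Sigma}(\emptyset,\alpha\rightarrow\Box\theta)$ and $\alpha R\beta$, so $\mathrm{Pbl}_{\mathbf{K}\Sigma}(\emptyset,\alpha\rightarrow\Diamond\beta')$ by hypothesis. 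If $\tilde V(\beta,\theta)=0$, then $\mathrm{Pbl}_{\mathbf{K}\Sigma}(\emptyset,\beta\rightarrow\neg\theta)$ by Lemma~\ref{select}; applying Lemma~\ref{select} to $\beta'$ and $\theta$ and using $\mathrm{Pbl}_{\mathbf{K}\Sigma}(\emptyset,\beta\rightarrow\beta')$ from Lemma~\ref{ea} rules out $\mathrm{Pbl}_{\mathbf{K}\Sigma}(\emptyset,\beta'\rightarrow\theta)$, leaving $\mathrm{Pbl}_{\mathbf{K}\Sigma}(\emptyset,\beta'\rightarrow\neg\theta)$, hence $\mathrm{Pbl}_{\mathbf{K}\Sigma}(\emptyset,\Diamond\beta'\rightarrow\neg\Box\theta)$ and then $\mathrm{Pbl}_{\mathbf{K}\Sigma}(\emptyset,\alpha\rightarrow\neg\Box\theta)$, contradicting the consistency of $\alpha$. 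The passage through $\beta'$ is precisely what the definitions of $\xrightarrow{m}$, $\xrightarrow{mm}$ and Lemma~\ref{ea} were set up to permit; this is why the upper bound is phrased with $\beta'$ rather than $\beta$.

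For the backward direction I would use only the lower bound $\xrightarrow{c}\subseteq R$, reproducing the first case of Lemma~\ref{Bsym}(4). Assuming $\tilde V(\alpha,\Box\theta)=0$, Lemma~\ref{select} gives $\mathrm{Pbl}_{\mathbf{K}\Sigma}(\emptyset,\alpha\rightarrow\Diamond\neg\theta)$, so $\alpha\wedge\Diamond\neg\theta$ is $\mathbf{K}\Sigma$-consistent; the Existence lemma (Lemma~\ref{exlem}), with $\neg\theta$ regarded as an element of $\mathcal{L}_{h+1,n}$, yields $\beta\in C_{h+1,n}$ with $\alpha\wedge\Diamond\beta$ consistent and $\mathrm{Pbl}_{\mathbf{K}\Sigma}(\emptyset,\beta\rightarrow\neg\theta)$, whence $\tilde V(\beta,\theta)=0$. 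Consistency of $\beta$ follows from that of $\alpha\wedge\Diamond\beta$ (as $\mathrm{Pbl}_{\mathbf{K}\Sigma}(\emptyset,\beta\rightarrow\bot)$ would propagate to $\mathrm{Pbl}_{\mathbf{K}\Sigma}(\emptyset,\Diamond\beta\rightarrow\bot)$), and once $\beta\in\tilde W$ we get $\alpha\xrightarrow{c}\beta$ by definition, hence $\alpha R\beta$. The step I expect to be the main obstacle is the membership $\beta\in\tilde W$, that is, $\mathrm{Pbl}_{\mathbf{K}\Sigma}(\emptyset,\beta\rightarrow\varphi)$: producing a successor that falsifies $\theta$ and still satisfies $\varphi$ is where all the care goes. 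I would establish it exactly as in Lemma~\ref{Bsym}(4), arguing that if $\mathrm{Pbl}_{\mathbf{K}\Sigma}(\emptyset,\beta\rightarrow\neg\varphi)$ held then $\mathrm{Pbl}_{\mathbf{K}\Sigma}(\emptyset,\Diamond\beta\rightarrow\neg\Box\varphi)$, which together with $\mathrm{Pbl}_{\mathbf{K}\Sigma}(\emptyset,\alpha\rightarrow\Box\varphi)$ would contradict the consistency of $\alpha\wedge\Diamond\beta$. Once this is in place, the remainder is routine transcription of the $\xrightarrow{c}$-truth-lemma, now uniformly sandwiched between the two bounds on $R$ so that the single statement covers both $N$ and $M$.
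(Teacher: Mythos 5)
Your proposal reproduces the paper's proof of this lemma almost step for step: the same $\mathrm{I}\Sigma^0_1$ induction on the length of $\psi$ over the finite structure $(\tilde{W},R,\tilde{V})$, the same forward direction through $\beta'$ (Lemma~\ref{ea} plus Lemma~\ref{select} applied to $\beta'$ and $\theta$, using the hypothesis $\alpha R\beta\Rightarrow\mathrm{Pbl}_{\mathbf{K}\Sigma}(\emptyset,\alpha\rightarrow\Diamond\beta')$), and the same backward direction via Lemma~\ref{select} and Lemma~\ref{exlem} applied at level $h+1$, with $\xrightarrow{c}\subseteq R$ supplying the successor. The one point where you go beyond the paper's text is the membership claim $\beta\in\tilde{W}$, i.e.\ $\mathrm{Pbl}_{\mathbf{K}\Sigma}(\emptyset,\beta\rightarrow\varphi)$: the paper's own proof simply writes ``Then $\alpha\xrightarrow{c}\beta$'', although $\xrightarrow{c}$ is a relation on $\tilde{W}\times\tilde{W}$ and the $\beta$ produced by Lemma~\ref{exlem} is a priori only an element of $C_{h+1,n}$. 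You correctly identified this as the crux, and correctly observed that the paper confronts it only in Lemma~\ref{Bsym}(4).

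The trouble is that the repair you import from Lemma~\ref{Bsym}(4) rests on a false claim: $\mathrm{Pbl}_{\mathbf{K}\Sigma}(\emptyset,\alpha\rightarrow\Box\varphi)$ does not follow from $\alpha$ being consistent and $\mathrm{Pbl}_{\mathbf{K}\Sigma}(\emptyset,\alpha\rightarrow\varphi)$. Take $\Sigma=\{4\}$, $\varphi\equiv p_0$, and $\alpha\equiv\Diamond\hat{T_0}\land\Box\hat{T_0}\land\hat{T_1}\in C_{1,n}$ with $p_0\in T_1$ and $p_0\notin T_0$: this $\alpha$ is consistent and proves $p_0$, yet it proves $\neg\Box p_0$. (The paper's assertion ``$\mathrm{Pbl}_{\mathbf{K}\Sigma}(\alpha,\Box\varphi)$ holds'' in Lemma~\ref{Bsym}(4) is the same statement, since $\mathrm{Pbl}$ is defined through proofs $p$ with $\mathrm{Prf}_{\mathbf{K}\Sigma}(\emptyset,p)$; it would hold only for a global consequence relation permitting necessitation of hypotheses, and even then it would not contradict the \emph{local} consistency of $\alpha\land\Diamond\beta$ delivered by Lemma~\ref{exlem}.) Moreover, the defect cannot be patched by a better argument, because with the paper's definition of $\tilde{W}$ the lemma itself fails: for $\Sigma=\{4\}$, $\varphi\equiv\neg\Box p_0$, $\gamma\equiv\Box\bot\land\hat{T_0}$ with $p_0\notin T_0$, and $\alpha\equiv\Diamond\gamma\land\Box\gamma\land\hat{T_1}$, one checks that $\alpha\in\tilde{W}$ and $\tilde{V}(\alpha,\Box p_0)=0$, while any $\beta\in C_{2,n}$ with $\alpha\land\Diamond\beta$ consistent must prove $\Box\bot$, hence $\Box p_0$, hence lies outside $\tilde{W}$; so taking $R=\xrightarrow{c}$ (allowed by Proposition~\ref{crm}), $\alpha$ has no $R$-successors at all and clause (6) of the definition of weak Kripke model fails at $\alpha$. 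The natural repair, which is Moss's original construction, is to drop the conjunct $\mathrm{Pbl}_{\mathbf{K}\Sigma}(\emptyset,\alpha\rightarrow\varphi)$ and let $\tilde{W}$ consist of all consistent members of $C_{h+1,n}$; then your argument (and the paper's) goes through with the membership step vacuous, and weak completeness still follows because Lemma~\ref{notempty} places some world proving $\varphi$ in the model.
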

\begin{proof}
We show that for all $\alpha\in\tilde{W}$, $\tilde{V}(\alpha, \psi)$ satisfies the conditions of a valuation by induction on the length of $\psi$, so we can prove it in $\mathrm{I}\mathrm{\Sigma}^0_1$ (Note that $(\tilde{W}, R, \tilde{V})$ is finite). We show it in the case of $\psi\equiv\Box\theta$, that is, for all $\alpha\in\tilde{W}$,
\begin{equation*}
\tilde{V}(\alpha, \Box\theta)=1\iff\forall \beta\in \tilde{W}(\alpha R\beta\Rightarrow\tilde{V}(\beta, \theta)=1).
\end{equation*}

Fix $\alpha\in\tilde{W}$. We have two cases. First, we assume that $\tilde{V}(\alpha, \Box\theta)=0$. Then by Lemma~\ref{select}, $\mathrm{Pbl}_{\mathbf{K}\mathrm{\mathrm{\Sigma}}}(\emptyset, \alpha\rightarrow\neg\Box\theta)$ holds. Thus $\alpha\land\Diamond\neg\theta$ is $\mathbf{K}\mathrm{\mathrm{\Sigma}}$-consistent, so by Lemma~\ref{exlem}, there exists $\beta$ such that $\mathrm{Pbl}_{\mathbf{K}\mathrm{\mathrm{\Sigma}}}(\emptyset, \beta\rightarrow\neg\theta)$ holds. Moreover, $\beta$ is $\mathbf{K}\mathrm{\mathrm{\Sigma}}$-consistent since $\alpha\land\Diamond\beta$ is $\mathbf{K}\mathrm{\mathrm{\Sigma}}$-consistent. Then $\alpha\xrightarrow{c}\beta$, thus $\alpha R\beta$. Thus $\neg\mathrm{Pbl}_{\mathbf{K}\mathrm{\mathrm{\Sigma}}}(\emptyset, \beta\rightarrow\theta)$ holds, so $\tilde{V}(\beta, \theta)=0$. 

The other case is when $\tilde{V}(\alpha, \Box\theta)=1$, that is, $\mathrm{Pbl}_{\mathbf{K}\mathrm{\mathrm{\Sigma}}}(\emptyset, \alpha\rightarrow\Box\theta)$ holds.  Fix $\beta\in\tilde{W}$ such that $\alpha R\beta$. $\mathrm{Pbl}_{\mathbf{K}\mathrm{\mathrm{\Sigma}}}(\emptyset, \alpha\rightarrow\Diamond\beta^{\prime})$ holds. Assume that $\mathrm{Pbl}_{\mathbf{K}\mathrm{\mathrm{\Sigma}}}(\emptyset, \beta^{\prime}\rightarrow\neg\theta)$ holds. Then $\mathrm{Pbl}_{\mathbf{K}\mathrm{\mathrm{\Sigma}}}(\emptyset, \Diamond\beta^{\prime}\rightarrow\neg\Box\theta)$ holds, thus $\mathrm{Pbl}_{\mathbf{K}\mathrm{\mathrm{\Sigma}}}(\emptyset, \alpha\rightarrow\neg\Box\theta)$ holds. It is the contradiction that $\alpha$ is $\mathbf{K}\mathrm{\mathrm{\Sigma}}$-consistent. Thus $\mathrm{Pbl}_{\mathbf{K}\mathrm{\mathrm{\Sigma}}}(\emptyset, \beta^{\prime}\rightarrow\theta)$ holds. Then $\mathrm{Pbl}_{\mathbf{K}\mathrm{\mathrm{\Sigma}}}(\emptyset, \beta\rightarrow\beta^{\prime})$ holds, so $\mathrm{Pbl}_{\mathbf{K}\mathrm{\mathrm{\Sigma}}}(\emptyset, \beta\rightarrow\theta)$ holds. 
\end{proof}

From the above, we now see the weak completeness theorem for the case of $4\in\mathrm{\Sigma}$ in $\mathrm{RCA}_0$.

\begin{prop}[$\mathrm{RCA}_0$]
Let $\mathrm{\Sigma}\subseteq\{T, 4, B, D\}$ and $4\in\mathrm{\Sigma}$. If $\varphi\in\mathrm{Fml}$ is $\mathbf{K}\mathrm{\mathrm{\Sigma}}$-consistent, then there exist a frame which is appropriate to $\mathrm{\Sigma}$ and a valuation $V$ such that $(F, V)$ is a weak model of $\varphi$.
\end{prop}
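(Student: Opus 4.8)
The plan is to split into the two cases already isolated by Definitions~\ref{m1} and \ref{m2}, namely $B\not\in\mathrm{\Sigma}$ and $B\in\mathrm{\Sigma}$, and in each case to read off the required frame and valuation from the model constructed there, invoking the generalized truth lemma (Lemma~\ref{generalized}) to certify that the valuation behaves correctly and the appropriateness lemmas (Lemma~\ref{trs}, Lemma~\ref{mmsuper}) to certify that the frame matches $\mathrm{\Sigma}$. First I fix $h,n$ with $\varphi\in\mathcal{L}_{h,n}$. Since $\varphi$ is $\mathbf{K}\mathrm{\Sigma}$-consistent, Lemma~\ref{notempty} guarantees that $\tilde{W}$ is non-empty, and the clause $\mathrm{Pbl}_{\mathbf{K}\mathrm{\Sigma}}(\emptyset,\alpha\rightarrow\varphi)$ in the definition of $\tilde{W}$ together with the definition of $\tilde{V}$ and Lemma~\ref{select} yields $\tilde{V}(\alpha,\varphi)=1$ for every $\alpha\in\tilde{W}$; hence whichever model is produced will indeed be a weak model \emph{of} $\varphi$.

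In the case $B\not\in\mathrm{\Sigma}$ I take $F=(\tilde{W},\xrightarrow{m})$ and $V=\tilde{V}$ from Definition~\ref{m1}. To apply Lemma~\ref{generalized} with $R=\xrightarrow{m}$ I check its two hypotheses: that $\alpha\xrightarrow{c}\beta$ implies $\alpha\xrightarrow{m}\beta$, which is exactly Proposition~\ref{crm}; and that $\alpha\xrightarrow{m}\beta$ implies $\mathrm{Pbl}_{\mathbf{K}\mathrm{\Sigma}}(\emptyset,\alpha\rightarrow\Diamond\beta^{\prime})$, which is immediate from the defining clause of $\xrightarrow{m}$ in Definition~\ref{m1}. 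Hence $(\tilde{W},\xrightarrow{m},\tilde{V})$ is a weak Kripke model. For appropriateness, since $4\in\mathrm{\Sigma}$ I use transitivity of $\xrightarrow{m}$ (Lemma~\ref{trs}(1)); if $T\in\mathrm{\Sigma}$ I add reflexivity (Lemma~\ref{trs}(2)) and if $D\in\mathrm{\Sigma}$ I add seriality (Lemma~\ref{trs}(3)). As $\mathrm{\Sigma}\subseteq\{T,4,D\}$ in this case, these exhaust the requirements, so $F$ is appropriate to $\mathrm{\Sigma}$.

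In the case $B\in\mathrm{\Sigma}$ I take $F=(\tilde{W},\xrightarrow{mm})$ and $V=\tilde{V}$ from Definition~\ref{m2}. Again I apply Lemma~\ref{generalized}, now with $R=\xrightarrow{mm}$: the inclusion $\alpha\xrightarrow{c}\beta\Rightarrow\alpha\xrightarrow{mm}\beta$ is the ``moreover'' clause of Proposition~\ref{crm}, and since $\alpha\xrightarrow{mm}\beta$ entails $\alpha\xrightarrow{m}\beta$ by the definition of $\xrightarrow{mm}$, the condition $\mathrm{Pbl}_{\mathbf{K}\mathrm{\Sigma}}(\emptyset,\alpha\rightarrow\Diamond\beta^{\prime})$ follows as before. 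Thus $(\tilde{W},\xrightarrow{mm},\tilde{V})$ is a weak Kripke model. For appropriateness, $\xrightarrow{mm}$ is symmetric and transitive by Lemma~\ref{mmsuper}(1), covering $B,4\in\mathrm{\Sigma}$, with reflexivity (Lemma~\ref{mmsuper}(2)) and seriality (Lemma~\ref{mmsuper}(3)) adjoined when $T$ or $D$ lies in $\mathrm{\Sigma}$, so $F$ is appropriate to $\mathrm{\Sigma}$.

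The substantive content has already been carried out in the preceding lemmas, so here the only genuine task is bookkeeping: confirming that the two hypotheses of Lemma~\ref{generalized} are met by $\xrightarrow{m}$ (respectively $\xrightarrow{mm}$) and that the appropriateness lemmas cover exactly the modal axioms in $\mathrm{\Sigma}$. The one point that requires a moment's care is the second hypothesis of Lemma~\ref{generalized} in the $B\in\mathrm{\Sigma}$ case, where one must observe that $\xrightarrow{mm}\,\subseteq\,\xrightarrow{m}$ so that the $\Diamond\beta^{\prime}$-condition is inherited; beyond that the argument is a direct assembly of the results above, and no new use of induction beyond the $\mathrm{I}\mathrm{\Sigma}^0_1$ already invoked in those lemmas is needed.
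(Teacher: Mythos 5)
Your proof is correct and follows essentially the same route as the paper: the same case split on $B\in\mathrm{\Sigma}$ using Definitions~\ref{m1} and \ref{m2}, with the Generalized truth lemma certifying the valuation and Lemmas~\ref{trs} and \ref{mmsuper} certifying appropriateness. The only difference is that you explicitly verify the two hypotheses of Lemma~\ref{generalized} (via Proposition~\ref{crm} and the inclusion $\xrightarrow{mm}\subseteq\xrightarrow{m}$), which the paper leaves implicit.
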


\begin{proof}
Assume that $\varphi\in\mathcal{L}_{h, n}$. By Definition~\ref{m1} and Definition~\ref{m2}, we define
\begin{itemize}
    \item Case1: If $B\not\in\mathrm{\Sigma}$, then we define $F=(\tilde{W}, \xrightarrow{m})$ and $V=\tilde{V}$.
    \item Case2: If $B\in\mathrm{\Sigma}$, then we define $F=(\tilde{W}, \xrightarrow{mm})$ and $V=\tilde{V}$.
\end{itemize}

By Generalized truth lemma, $(F, V)$ is a weak model of $\varphi$.
\end{proof}

Summarizing, we have the following weak completeness theorem with the finite model property. 
\begin{thm}[Weak completeness theorem, $\mathrm{RCA}_0$]
Let $\mathrm{\Sigma}\subseteq\{T, 4, B, D\}$. If $\varphi\in\mathrm{Fml}$ is $\mathbf{K}\mathrm{\mathrm{\Sigma}}$-consistent, then there exist a finite frame $F$ which is appropriate to $\mathrm{\Sigma}$ and a valuation $V$ such that $(F, V)$ is a weak model of $\varphi$.
\end{thm}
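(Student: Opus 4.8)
The plan is to split into the two cases already handled and simply assemble the result. If $4\notin\mathrm{\Sigma}$, then $\mathrm{\Sigma}\subseteq\{T,B,D\}$, so I would invoke Proposition~\ref{wn4} directly to obtain a frame $F=(\tilde{W},\xrightarrow{c})$ appropriate to $\mathrm{\Sigma}$ together with the valuation $\tilde{V}$ of Definition~\ref{wn4first} making $(F,\tilde{V})$ a weak model of $\varphi$. If $4\in\mathrm{\Sigma}$, I would instead use the immediately preceding proposition, taking $F=(\tilde{W},\xrightarrow{m})$ when $B\notin\mathrm{\Sigma}$ (Definition~\ref{m1}) and $F=(\tilde{W},\xrightarrow{mm})$ when $B\in\mathrm{\Sigma}$ (Definition~\ref{m2}); appropriateness is supplied by Lemma~\ref{trs} and Lemma~\ref{mmsuper}, while the valuation clauses are verified by the Generalized truth lemma (Lemma~\ref{generalized}). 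Since the two conditions $4\in\mathrm{\Sigma}$ and $4\notin\mathrm{\Sigma}$ plainly exhaust all $\mathrm{\Sigma}\subseteq\{T,4,B,D\}$, this case analysis is complete.

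The one point that the two source propositions do not explicitly record, and which is the genuinely new content of this theorem, is that $F$ is a \emph{finite} frame. This is immediate from the constructions: in every case the domain satisfies $\tilde{W}\subseteq C_{h+1,n}$, and each $C_{h+1,n}$ exists as a finite set, available uniformly in $\mathrm{RCA}_0$ by the primitive recursion defining it. Hence $\tilde{W}$ is finite; the accessibility relation, being a subset of $\tilde{W}\times\tilde{W}$, is finite as well; and the valuation $\tilde{V}$ has finite domain $\tilde{W}\times sub(\varphi)$. Thus nothing beyond the bounded comprehension and $\mathrm{I}\mathrm{\Sigma}^0_1$ already used in the earlier constructions is required, and the whole assembly stays within $\mathrm{RCA}_0$.

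I do not expect a real obstacle here: the substantive work — verifying the frame conditions for each $\mathrm{\Sigma}$ and establishing the truth lemma by $\mathrm{I}\mathrm{\Sigma}^0_1$ induction on the length of subformulas — has already been carried out in Lemmas~\ref{Bsym}, \ref{trs}, \ref{mmsuper}, and~\ref{generalized}, with the connection between $\xrightarrow{c}$ and the relations $\xrightarrow{m},\xrightarrow{mm}$ secured by Proposition~\ref{crm}. The only care required is to match each $\mathrm{\Sigma}$ with the correct constituent model and to record finiteness as above, so that the statement can be phrased as a weak completeness theorem enjoying the finite model property.
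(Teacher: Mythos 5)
Your proposal coincides with the paper's own treatment: the theorem is stated there merely as a summary of Proposition~\ref{wn4} (the case $4\notin\mathrm{\Sigma}$) and the immediately preceding proposition (the case $4\in\mathrm{\Sigma}$, split on $B$), which is exactly the case analysis you perform, with appropriateness and the truth conditions supplied by Lemmas~\ref{Bsym}, \ref{trs}, \ref{mmsuper}, \ref{generalized} and Proposition~\ref{crm}. Your additional observation that $\tilde{W}\subseteq C_{h+1,n}$ is finite is the correct (and in the paper implicit) justification of the finite model property, so the proof is complete and faithful to the paper's route.
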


\section{Weak Completeness Theorem for \textbf{GL}}
In this section, we will prove the weak completeness theorem for $\mathbf{GL}$ in $\mathrm{RCA}_0$. 
\begin{prop}[$\mathrm{RCA}_0$]\label{fin}
Let $F=(W, R)$ be a finite transitive frame such that $R$ is irreflexive i.e., $\forall w\in W\neg(wRw)$. Then $F$ is appropriate to $L$.
\end{prop}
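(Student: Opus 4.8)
The plan is to observe that transitivity of $R$ is already part of the hypothesis, so by the definition of appropriateness to $L$ the only remaining task is to verify that $W$ has no infinite ascending sequence by $R$; this is exactly where finiteness and irreflexivity enter. I would argue by contradiction, assuming there is a function $g:\mathbb{N}\to W$ with $g(n)Rg(n+1)$ for every $n$, and deriving a contradiction from the standard fact that a transitive irreflexive relation on a finite set is conversely well-founded.

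The first step is to upgrade single $R$-steps along $g$ to arbitrary $R$-jumps. Fixing $i$, I would show by $\mathrm{I}\mathrm{\Sigma}^0_1$ induction on $m\geq 1$ that $g(i)Rg(i+m)$: the base case $m=1$ is the hypothesis on $g$, and the inductive step combines $g(i)Rg(i+m)$ with $g(i+m)Rg(i+m+1)$ using transitivity of $R$. Since $R$ and $g$ are set parameters, the predicate $g(i)Rg(i+m)$ is $\mathrm{\Delta}^0_1$, so this induction is available in $\mathrm{RCA}_0$. Consequently $g(i)Rg(j)$ holds whenever $i<j$.

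The second step is a pigeonhole argument. As $W$ is finite, let $k=\lvert W\rvert$. The $k+1$ values $g(0),\dots,g(k)$ all lie in $W$, which has only $k$ elements, so by the finite pigeonhole principle, which is provable in $\mathrm{RCA}_0$, there exist $i<j\leq k$ with $g(i)=g(j)$. Combining this with the first step gives $g(i)Rg(j)=g(i)$, hence $g(i)Rg(i)$, contradicting the irreflexivity of $R$. This contradiction shows no such $g$ exists, and therefore $F$ is appropriate to $L$.

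I expect no genuine obstacle: the content is a routine finiteness fact. The only points requiring care are purely in the formalization, namely confirming that the chain-to-jump induction stays $\mathrm{\Sigma}^0_1$ and that the instance of the pigeonhole principle is the finite one provable in $\mathrm{RCA}_0$; both are unproblematic. It is worth emphasizing that irreflexivity is genuinely needed, since under transitivity alone a point $w$ with $wRw$ would yield the infinite ascending sequence $w,w,w,\dots$, so the hypothesis $\forall w\in W\,\neg(wRw)$ cannot be dropped.
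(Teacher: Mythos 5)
Your proof is correct and follows essentially the same route as the paper's: assume an infinite ascending sequence exists, use the finite pigeonhole principle to find $i<j$ with equal values, use transitivity to get $g(i)Rg(j)$, and contradict irreflexivity. Your write-up is actually more careful than the paper's, since you make explicit the $\mathrm{I}\mathrm{\Sigma}^0_1$ induction upgrading single $R$-steps to arbitrary jumps $g(i)Rg(j)$, a step the paper compresses into the single phrase ``Since $R$ is transitive, $f(i)Rf(j)$.''
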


\begin{proof}
Suppose $|W|=m$. Assume that $F$ is not appropriate to $L$. Then $W$ has an infinite assending sequence by $R$, so $(\exists f:\mathbb{N}\rightarrow W)(\forall n\in\mathbb{N})(f(n)Rf(n+1))$ holds. We consider $f\upharpoonright m+1:m+1\rightarrow W$. By $\mathrm{I}\mathrm{\Sigma}^0_1$, we can show that there exist $i<j<m+1$ such that $f(i)=f(j)$. Let $w_0=f(i)$. Since $R$ is transitive, $f(i)Rf(j)$. Thus $w_0Rw_0$. It is a contradiction that $R$ is irreflexive.
\end{proof}

\begin{dfn}[$\mathrm{RCA}_0$]
Let $\varphi\in\mathcal{L}_{h, n}$ be $\mathbf{GL}$-consistent. Then we define $D=(\tilde{W}, \xrightarrow{d}, \tilde{V})$ satisfying the followings:
\begin{itemize}
    \item $\tilde{W}=\{\alpha\in C_{h+1, n}\mid\neg\mathrm{Pbl}_{\mathbf{GL}}(\alpha,\bot)\land\mathrm{Pbl}_{\mathbf{GL}}(\emptyset, \alpha\rightarrow\varphi)\}$,
    \item $\xrightarrow{d}=\{(\alpha, \beta)\in\tilde{W}\times\tilde{W}\mid\alpha\xrightarrow{m}\beta\land\exists\gamma\in C_{h, n}(\mathrm{Pbl}_{\mathbf{GL}}(\emptyset, \alpha\rightarrow\Diamond\gamma)\land\mathrm{Pbl}_{\mathbf{GL}}(\emptyset, \beta\rightarrow\Box\neg\gamma))\}$,
    \item For all $\psi\in sub(\varphi)$ and $\alpha\in\tilde{W}$, $\tilde{V}(\alpha, \psi)=1\iff\mathrm{Pbl}_{\mathbf{GL}}(\emptyset, \alpha\rightarrow\psi)$.
\end{itemize}
\end{dfn}

This $D$ exists by Lemma~\ref{select} and $\mathrm{I}\mathrm{\Sigma}^0_1$. 

\begin{lem}[$\mathrm{RCA}_0$]
The followings hold:
\begin{enumerate}
    \item $\xrightarrow{d}$ is transitive.
    \item $\tilde{W}$ has no infinite assending sequences by $\xrightarrow{d}$.
\end{enumerate}
\end{lem}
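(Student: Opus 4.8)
The plan is to exploit the auxiliary relation $\xrightarrow{m}$, whose transitivity is already Lemma~\ref{trs}, together with the extra ``witness'' clause built into the definition of $\xrightarrow{d}$, using throughout that $\Box\neg\xi$ and $\neg\Diamond\xi$ are provably equivalent in $\mathbf{GL}$.

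For (1), suppose $\alpha\xrightarrow{d}\beta$ and $\beta\xrightarrow{d}\gamma$. Since $\alpha\xrightarrow{m}\beta$ and $\beta\xrightarrow{m}\gamma$, transitivity of $\xrightarrow{m}$ (Lemma~\ref{trs}) gives $\alpha\xrightarrow{m}\gamma$, so it remains to produce a witness $\delta\in C_{h, n}$ with $\mathrm{Pbl}_{\mathbf{GL}}(\emptyset, \alpha\rightarrow\Diamond\delta)$ and $\mathrm{Pbl}_{\mathbf{GL}}(\emptyset, \gamma\rightarrow\Box\neg\delta)$. I would reuse the witness $\gamma_0\in C_{h, n}$ coming from $\alpha\xrightarrow{d}\beta$, for which $\mathrm{Pbl}_{\mathbf{GL}}(\emptyset, \alpha\rightarrow\Diamond\gamma_0)$ and $\mathrm{Pbl}_{\mathbf{GL}}(\emptyset, \beta\rightarrow\Box\neg\gamma_0)$ already hold. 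By Lemma~\ref{select} (applied at height $h+1$, since $\Diamond\gamma_0\in\mathcal{L}_{h+1, n}$ and $\gamma\in C_{h+1, n}$), either $\mathrm{Pbl}_{\mathbf{GL}}(\emptyset, \gamma\rightarrow\Diamond\gamma_0)$ or $\mathrm{Pbl}_{\mathbf{GL}}(\emptyset, \gamma\rightarrow\neg\Diamond\gamma_0)$ holds. The first is impossible: the second clause of $\beta\xrightarrow{m}\gamma$ (instantiated at $\xi=\gamma_0$) would then force $\mathrm{Pbl}_{\mathbf{GL}}(\emptyset, \beta\rightarrow\Diamond\gamma_0)$, contradicting $\mathrm{Pbl}_{\mathbf{GL}}(\emptyset, \beta\rightarrow\Box\neg\gamma_0)$ and the $\mathbf{GL}$-consistency of $\beta$. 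Hence $\mathrm{Pbl}_{\mathbf{GL}}(\emptyset, \gamma\rightarrow\Box\neg\gamma_0)$, so $\delta=\gamma_0$ witnesses $\alpha\xrightarrow{d}\gamma$.

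For (2), the key idea is that each $\xrightarrow{d}$-step strictly shrinks the set of ``diamond-reachable colors''. For $\alpha\in\tilde{W}$ I would set $\mathrm{col}(\alpha)=\{\xi\in C_{h, n}\mid\mathrm{Pbl}_{\mathbf{GL}}(\emptyset, \alpha\rightarrow\Diamond\xi)\}$; by Lemma~\ref{select} and the $\mathbf{GL}$-consistency of the members of $\tilde{W}$, membership in $\mathrm{col}(\alpha)$ is decidable, so $\mathrm{col}(\alpha)$ is a genuine finite subset of $C_{h, n}$ with a well-defined cardinality. I claim $\alpha\xrightarrow{d}\beta$ implies $\mathrm{col}(\beta)\subsetneq\mathrm{col}(\alpha)$: the inclusion $\mathrm{col}(\beta)\subseteq\mathrm{col}(\alpha)$ is precisely the second clause of $\alpha\xrightarrow{m}\beta$, while the witness $\gamma$ of $\alpha\xrightarrow{d}\beta$ satisfies $\gamma\in\mathrm{col}(\alpha)$ (from $\mathrm{Pbl}_{\mathbf{GL}}(\emptyset, \alpha\rightarrow\Diamond\gamma)$) and $\gamma\notin\mathrm{col}(\beta)$ (since $\mathrm{Pbl}_{\mathbf{GL}}(\emptyset, \beta\rightarrow\Box\neg\gamma)$ together with $\beta$-consistency rules out $\mathrm{Pbl}_{\mathbf{GL}}(\emptyset, \beta\rightarrow\Diamond\gamma)$). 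Thus $|\mathrm{col}(\beta)|<|\mathrm{col}(\alpha)|$. Consequently, an infinite $\xrightarrow{d}$-ascending sequence $f:\mathbb{N}\rightarrow\tilde{W}$ would make $n\mapsto|\mathrm{col}(f(n))|$ a strictly decreasing sequence of natural numbers, which is absurd.

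The only delicate point, which I expect to be the main obstacle, is verifying that this last argument goes through in $\mathrm{RCA}_0$: one must check that $\mathrm{col}(\alpha)$ and the cardinality function are actually available. This follows because Lemma~\ref{select} makes each predicate $\mathrm{Pbl}_{\mathbf{GL}}(\emptyset, \alpha\rightarrow\Diamond\xi)$ equivalent, for the consistent $\alpha\in\tilde{W}$, to the negation of the provability statement $\mathrm{Pbl}_{\mathbf{GL}}(\emptyset, \alpha\rightarrow\neg\Diamond\xi)$, hence $\Delta^0_1$; so $\Delta^0_1$-comprehension yields the finite sets $\mathrm{col}(f(n))$ and their cardinalities uniformly, and $\mathrm{I}\mathrm{\Sigma}^0_1$ (the impossibility of an infinite strictly descending sequence of naturals) delivers the contradiction.
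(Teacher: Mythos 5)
Your proposal is correct, and it splits naturally into two comparisons. For part (1) your argument is essentially the paper's own: invoke Lemma~\ref{trs} for $\alpha\xrightarrow{m}\gamma$, then reuse the witness $\gamma_0$ from $\alpha\xrightarrow{d}\beta$, ruling out $\mathrm{Pbl}_{\mathbf{GL}}(\emptyset,\gamma\rightarrow\Diamond\gamma_0)$ via Lemma~\ref{select}, the second clause of $\beta\xrightarrow{m}\gamma$, and the $\mathbf{GL}$-consistency of $\beta$; this is word-for-word the paper's proof. For part (2), however, you take a genuinely different route. The paper proves (2) by appealing to Proposition~\ref{fin} (a finite transitive irreflexive frame has no infinite ascending chains, shown by pigeonhole and $\mathrm{I}\mathrm{\Sigma}^0_1$) together with the observation that $\xrightarrow{d}$ is irreflexive by definition (a reflexive point would prove both $\Diamond\gamma$ and $\Box\neg\gamma$, contradicting consistency); in particular the paper's (2) depends on (1) for transitivity. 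Your argument instead introduces the potential function $\alpha\mapsto|\mathrm{col}(\alpha)|$, where $\mathrm{col}(\alpha)=\{\xi\in C_{h,n}\mid\mathrm{Pbl}_{\mathbf{GL}}(\emptyset,\alpha\rightarrow\Diamond\xi)\}$, and shows that every $\xrightarrow{d}$-step strictly decreases it: the inclusion $\mathrm{col}(\beta)\subseteq\mathrm{col}(\alpha)$ is the second clause of $\xrightarrow{m}$, and strictness comes from the witness $\gamma$, which is exactly the same consistency observation that gives the paper irreflexivity. Your treatment of the $\mathrm{RCA}_0$ issues is also sound: Lemma~\ref{select} plus consistency makes membership in $\mathrm{col}(\alpha)$ provably $\mathrm{\Delta}^0_1$, so bounded comprehension yields the finite sets and their cardinalities, and $\mathrm{I}\mathrm{\Sigma}^0_1$ rules out a strictly decreasing sequence of naturals. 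The paper's route buys modularity (Proposition~\ref{fin} is stated once and reused), while yours buys independence from (1) and from Proposition~\ref{fin}, and additionally yields a quantitative bound: any $\xrightarrow{d}$-chain has length at most $|C_{h,n}|+1$.
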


\begin{proof}
(1): Let $\alpha\xrightarrow{d}\beta$ and $\beta\xrightarrow{d}\gamma$. By Lemma~\ref{trs}, $\alpha\xrightarrow{m}\gamma$. Then there exist $\delta_0, \delta_1\in C_{h, n}$ such that $\mathrm{Pbl}_{\textbf{GL}}(\emptyset, \alpha\rightarrow\Diamond\delta_0)$ and $\mathrm{Pbl}_{\textbf{GL}}(\emptyset, \beta\rightarrow\Box\neg\delta_0)$ and $\mathrm{Pbl}_{\textbf{GL}}(\emptyset, \beta\rightarrow\Diamond\delta_1)$ and $\mathrm{Pbl}_{\textbf{GL}}(\emptyset, \gamma\rightarrow\Box\neg\delta_1)$. Assume that $\neg\mathrm{Pbl}_{\textbf{GL}}(\emptyset, \gamma\rightarrow\Box\neg\delta_0)$. Then, by Lemma~\ref{select}, $\mathrm{Pbl}_{\textbf{GL}}(\emptyset, \gamma\rightarrow\Diamond\delta_0)$ holds. By $\beta\xrightarrow{m}\gamma$, $\mathrm{Pbl}_{\textbf{GL}}(\emptyset, \beta\rightarrow\Diamond\delta_0)$ holds. It is a contradiction that $\mathrm{Pbl}_{\textbf{GL}}(\emptyset, \beta\rightarrow\Box\neg\delta_0)$ holds and $\beta$ is \textbf{GL}-consistent. Thus $\mathrm{Pbl}_{\textbf{GL}}(\emptyset, \gamma\rightarrow\Box\neg\delta_0)$ holds. 

(2): By Proposition~\ref{fin} and the definition of $\xrightarrow{d}$.
\end{proof}

\begin{prop}[$\mathrm{RCA}_0$]\label{gl4}
Let $\alpha, \beta\in\tilde{W}$ and $\mathrm{\Sigma}=\{L\}$. Then $\alpha\xrightarrow{c}\beta$ implies $\alpha\xrightarrow{m}\beta$.
\end{prop}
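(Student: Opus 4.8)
The plan is to reproduce the proof of Proposition~\ref{crm} almost verbatim, with $\mathbf{GL}$ playing the role of $\mathbf{K}\mathrm{\Sigma}$. The only place where that argument invokes its hypothesis ``$4\in\mathrm{\Sigma}$'' is in passing from $\mathrm{Pbl}(\emptyset,\Diamond\beta\rightarrow\Diamond\Diamond\xi)$ to $\mathrm{Pbl}(\emptyset,\Diamond\beta\rightarrow\Diamond\xi)$ by means of $\Diamond\Diamond\xi\rightarrow\Diamond\xi$. For $\mathbf{GL}$ I would instead obtain this from the fact that $\mathbf{GL}$ itself proves the $4$ axiom, i.e. $\mathrm{Pbl}_{\mathbf{GL}}(\emptyset,\Box p\rightarrow\Box\Box p)$ (the standard L\"ob derivation: apply $L$ to $\Box p\land p$ and combine it with the $\mathbf{K}$-theorem $\mathrm{Pbl}_{\mathbf{GL}}(\emptyset, p\rightarrow(\Box(\Box p\land p)\rightarrow(\Box p\land p)))$ via Necessitation and $K$). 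Since this is witnessed by one explicit finite proof, $\mathrm{Pbl}_{\mathbf{GL}}(\emptyset,\Box p\rightarrow\Box\Box p)$ is a true $\mathrm{\Sigma}^0_1$ sentence and is therefore available in $\mathrm{RCA}_0$ by exhibiting the proof; by the uniform substitution rule (with $p:=\neg\xi$) together with propositional and $K$ reasoning it yields $\mathrm{Pbl}_{\mathbf{GL}}(\emptyset,\Diamond\Diamond\xi\rightarrow\Diamond\xi)$ for each $\xi\in C_{h,n}$, which is exactly the ingredient that ``$4\in\mathrm{\Sigma}$'' supplied before.

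Granting this, I would take $\alpha\xrightarrow{c}\beta$, so that $\alpha\land\Diamond\beta$ is $\mathbf{GL}$-consistent, and verify the two conjuncts of $\alpha\xrightarrow{m}\beta$. For the first, $\mathrm{Pbl}_{\mathbf{GL}}(\emptyset,\beta\rightarrow\beta^{\prime})$ gives $\mathrm{Pbl}_{\mathbf{GL}}(\emptyset,\Diamond\beta\rightarrow\Diamond\beta^{\prime})$; as $\alpha\in C_{h+1,n}$ and $\Diamond\beta^{\prime}\in\mathcal{L}_{h+1,n}$, Lemma~\ref{select} leaves only the options $\mathrm{Pbl}_{\mathbf{GL}}(\emptyset,\alpha\rightarrow\Diamond\beta^{\prime})$ or $\mathrm{Pbl}_{\mathbf{GL}}(\emptyset,\alpha\rightarrow\neg\Diamond\beta^{\prime})$, and the latter would make $\alpha\land\Diamond\beta$ inconsistent, so the former holds. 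For the second conjunct I fix $\xi\in C_{h,n}$ with $\mathrm{Pbl}_{\mathbf{GL}}(\emptyset,\beta\rightarrow\Diamond\xi)$ and assume for contradiction $\neg\mathrm{Pbl}_{\mathbf{GL}}(\emptyset,\alpha\rightarrow\Diamond\xi)$. Then $\mathrm{Pbl}_{\mathbf{GL}}(\emptyset,\Diamond\beta\rightarrow\Diamond\Diamond\xi)$, whence with the $4$-consequence from the first paragraph $\mathrm{Pbl}_{\mathbf{GL}}(\emptyset,\Diamond\beta\rightarrow\Diamond\xi)$; but Lemma~\ref{select} turns the assumption into $\mathrm{Pbl}_{\mathbf{GL}}(\emptyset,\alpha\rightarrow\neg\Diamond\xi)$, again contradicting the $\mathbf{GL}$-consistency of $\alpha\land\Diamond\beta$. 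Hence both conjuncts hold and $\alpha\xrightarrow{m}\beta$.

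The one prerequisite that is more than transcription is the legitimacy of $\beta^{\prime}$ in Definition~\ref{m1} for $\mathbf{GL}$: I must know that each $\mathbf{GL}$-consistent $\beta\in C_{h+1,n}$ has a unique $\mathbf{GL}$-consistent $\beta^{\prime}\in C_{h,n}$ with $\mathrm{Pbl}_{\mathbf{GL}}(\emptyset,\beta\rightarrow\beta^{\prime})$. Since $\mathbf{GL}$ extends $\mathbf{K}$ and proves $4$, the argument of Lemma~\ref{ea} transfers: a $\mathbf{GL}$-consistent $\beta$ is in particular $\mathbf{K}$-consistent, so by Proposition~\ref{bigo}, Lemma~\ref{select}, and Proposition~\ref{wn4} applied with empty $\mathrm{\Sigma}$ to $\bigoplus C_{h,n}\in\mathcal{L}_{h,n}$, exactly one $\beta^{\prime}\in C_{h,n}$ satisfies $\mathrm{Pbl}_{\mathbf{K}}(\emptyset,\beta\rightarrow\beta^{\prime})$, and it inherits $\mathbf{GL}$-consistency from $\beta$. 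I expect the genuine content, as opposed to routine adaptation, to lie entirely in establishing $\mathbf{GL}\vdash\Box p\rightarrow\Box\Box p$ inside $\mathrm{RCA}_0$ and thereby transporting the whole $\xrightarrow{m}$-apparatus of Section~3 into the $\mathbf{GL}$ setting; once that is in place, the remainder is a line-by-line copy of Proposition~\ref{crm}.
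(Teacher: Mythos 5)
Your proof is correct and takes essentially the same route as the paper: the paper's entire proof reads ``By Proposition~\ref{crm} and the fact that $\mathrm{Pbl}_{\mathbf{GL}}(\emptyset, 4)$ holds,'' which is precisely your observation that the only use of $4\in\mathrm{\Sigma}$ in Proposition~\ref{crm} is the derivability of the $4$ axiom, supplied for $\mathbf{GL}$ by the standard L\"ob derivation. Your extra care in transferring Lemma~\ref{ea} (the well-definedness of $\beta^{\prime}$) to the $\mathbf{GL}$ setting fills in a detail the paper leaves implicit, but it is an expansion of the same argument, not a different approach.
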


\begin{proof}
By Proposition~\ref{crm} and the fact that $\mathrm{Pbl}_{\mathbf{GL}}(\emptyset, 4)$ holds. 
\end{proof}

\begin{lem}[Truth lemma for $\mathbf{GL}$, $\mathrm{RCA}_0$]
Let $\varphi\in\mathcal{L}_{h, n}$ be $\mathbf{GL}$-consistent. Then $(\tilde{W}, \xrightarrow{d}, \tilde{V})$ is a weak Kripke model.
\end{lem}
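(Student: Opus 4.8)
The plan is to derive the statement from the Generalized truth lemma (Lemma~\ref{generalized}), applied to the relation $R=\xrightarrow{d}$. Since $\tilde{W}$, $\xrightarrow{d}$ and $\tilde{V}$ are all finite and defined by $\mathrm{Pbl}_{\mathbf{GL}}$-conditions, the clauses for $\bot$ and $\rightarrow$ in the definition of a weak Kripke model are routine consequences of Lemma~\ref{select}, and the only substantial point is the $\Box$-clause, which is exactly what Lemma~\ref{generalized} delivers once its two hypotheses are checked for $R=\xrightarrow{d}$: namely (i) $\alpha\xrightarrow{c}\beta$ implies $\alpha\xrightarrow{d}\beta$, and (ii) $\alpha\xrightarrow{d}\beta$ implies $\mathrm{Pbl}_{\mathbf{GL}}(\emptyset,\alpha\rightarrow\Diamond\beta')$. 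Note that the proof of Lemma~\ref{generalized} uses, among the modal axioms, only $4$, and $\mathrm{Pbl}_{\mathbf{GL}}(\emptyset,4)$ holds; so it applies verbatim with $\mathbf{K}\mathrm{\Sigma}$ replaced by $\mathbf{GL}$.

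Hypothesis (ii) is immediate, since $\xrightarrow{d}\subseteq\xrightarrow{m}$ by definition and the first conjunct in the definition of $\xrightarrow{m}$ is precisely $\mathrm{Pbl}_{\mathbf{GL}}(\emptyset,\alpha\rightarrow\Diamond\beta')$. For hypothesis (i), suppose $\alpha\xrightarrow{c}\beta$, i.e. $\alpha\wedge\Diamond\beta$ is $\mathbf{GL}$-consistent. By Proposition~\ref{gl4} we already have $\alpha\xrightarrow{m}\beta$, so it remains to produce a strictness witness $\gamma\in C_{h,n}$ with $\mathrm{Pbl}_{\mathbf{GL}}(\emptyset,\alpha\rightarrow\Diamond\gamma)$ and $\mathrm{Pbl}_{\mathbf{GL}}(\emptyset,\beta\rightarrow\Box\neg\gamma)$. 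The natural candidate is $\gamma=\beta'$: the first requirement is then $\mathrm{Pbl}_{\mathbf{GL}}(\emptyset,\alpha\rightarrow\Diamond\beta')$, which is again the $\xrightarrow{m}$-clause and so holds. Applying Lemma~\ref{select} to $\beta\in C_{h+1,n}$ and the formula $\Diamond\beta'\in\mathcal{L}_{h+1,n}$, the second requirement $\mathrm{Pbl}_{\mathbf{GL}}(\emptyset,\beta\rightarrow\Box\neg\beta')$ is equivalent to ruling out $\mathrm{Pbl}_{\mathbf{GL}}(\emptyset,\beta\rightarrow\Diamond\beta')$, i.e. to showing that the reduct $\beta'$ is \emph{not} one of the $\Diamond$-witnesses recorded in $\beta$.

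Establishing $\mathrm{Pbl}_{\mathbf{GL}}(\emptyset,\beta\rightarrow\Box\neg\beta')$ is the heart of the matter, and the step I expect to be the main obstacle. This is precisely the point where the Löb axiom $L$ is indispensable: it is the syntactic reflection of the irreflexivity (indeed converse well-foundedness) of $\mathbf{GL}$-frames, which is exactly what guarantees $\gamma=\beta'$ is a genuine strictness witness and which makes $\xrightarrow{d}$ irreflexive. Without it the claim fails — for a reflexive logic $\beta'$ could well be a $\Diamond$-witness of $\beta$ — so the height/reduct bookkeeping alone does not suffice. Concretely, I would derive the fact from $L$ together with $\mathrm{Pbl}_{\mathbf{GL}}(\emptyset,\beta\rightarrow\beta')$ (the defining property of the reduct, Lemma~\ref{ea}) and the fact that $\beta$ proves $\Box\bigvee S$ for its $\Diamond$-component $S\subseteq C_{h,n}$, using the dual Löb principle $\mathrm{Pbl}_{\mathbf{GL}}(\emptyset,\Diamond\chi\rightarrow\Diamond(\chi\wedge\Box\neg\chi))$ to force the required strict decrease of the $\Diamond$-witness set. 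Carrying this out purely inside $\mathrm{Pbl}_{\mathbf{GL}}$, that is, by exhibiting an actual formal $\mathbf{GL}$-derivation and invoking only $\mathrm{I}\mathrm{\Sigma}^0_1$ and the finiteness of $C_{h,n}$, is the delicate part; once it is done, (i) and (ii) hold and Lemma~\ref{generalized} yields that $(\tilde{W},\xrightarrow{d},\tilde{V})$ is a weak Kripke model.
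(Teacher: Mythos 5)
Your plan to derive the lemma from Lemma~\ref{generalized} with $R=\xrightarrow{d}$ cannot work, because hypothesis (i) of that lemma is simply false for $\xrightarrow{d}$: the inclusion $\xrightarrow{c}\subseteq\xrightarrow{d}$ fails. Indeed, $\xrightarrow{d}$ is irreflexive on $\tilde{W}$ (if $\alpha\xrightarrow{d}\alpha$, the strictness witness $\gamma$ would make $\alpha$ prove both $\Diamond\gamma$ and $\Box\neg\gamma$, contradicting the consistency of $\alpha$), whereas $\xrightarrow{c}$ has reflexive pairs. Concretely, take $\varphi=p_0$, $h=0$, $n=0$, and $\beta=\Diamond p_0\land\Box p_0\land p_0\in C_{1,0}$. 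The transitively closed three-world chain $u\to w\to v$ with $p_0$ true everywhere is appropriate to $L$, and $\beta\land\Diamond\beta$ holds at $u$; so by Theorem~\ref{sound} both $\beta$ and $\beta\land\Diamond\beta$ are $\mathbf{GL}$-consistent, i.e.\ $\beta\in\tilde{W}$ and $\beta\xrightarrow{c}\beta$, while $\beta\not\xrightarrow{d}\beta$. The same example refutes the key claim you flagged as ``the heart of the matter'': here $\beta'=p_0$, and $\beta$ proves $\Diamond\beta'$, so a consistent $\beta$ cannot prove $\Box\neg\beta'$; no appeal to the L\"ob axiom can rescue a false statement. (The failure is not confined to reflexive pairs either: $\alpha=\Diamond p_0\land\Box p_0\land\neg p_0$ satisfies $\alpha\xrightarrow{c}\beta$ but not $\alpha\xrightarrow{d}\beta$, since neither $\gamma=p_0$ nor $\gamma=\neg p_0$ can serve as a witness.) This is precisely why $\mathbf{GL}$ needs a separate truth lemma rather than an application of Lemma~\ref{generalized}.

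The correct argument (the paper's) keeps the inductive structure but replaces the forward direction of the $\Box$-case by a L\"ob-specific construction of a successor tailored to the formula $\theta$, instead of a blanket inclusion of relations. If $\tilde{V}(\alpha,\Box\theta)=0$, then $\mathrm{Pbl}_{\mathbf{GL}}(\emptyset,\alpha\rightarrow\Diamond\neg\theta)$ by Lemma~\ref{select}, and the axiom $L$ --- equivalently the dual L\"ob principle you cite, but applied to $\chi=\neg\theta$ --- upgrades this to the $\mathbf{GL}$-consistency of $\alpha\land\Diamond(\Box\theta\land\neg\theta)$. Lemma~\ref{exlem} then produces $\beta$ with $\alpha\land\Diamond\beta$ consistent and $\mathrm{Pbl}_{\mathbf{GL}}(\emptyset,\beta\rightarrow\Box\theta\land\neg\theta)$. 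By Proposition~\ref{gl4}, $\alpha\xrightarrow{m}\beta$; and a canonical formula $\gamma\in C_{h,n}$ obtained from Lemma~\ref{exlem} below $\neg\theta$ gives $\mathrm{Pbl}_{\mathbf{GL}}(\emptyset,\alpha\rightarrow\Diamond\gamma)$ while $\mathrm{Pbl}_{\mathbf{GL}}(\emptyset,\beta\rightarrow\Box\neg\gamma)$ follows from $\beta\rightarrow\Box\theta$, so $\alpha\xrightarrow{d}\beta$ and $\tilde{V}(\beta,\theta)=0$. The backward direction runs exactly as in Lemma~\ref{generalized}, using only $\xrightarrow{d}\subseteq\xrightarrow{m}$ together with Lemma~\ref{ea} --- which is the one hypothesis you did verify correctly.
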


\begin{proof}

We show that for all $\alpha\in\tilde{W}$, $\tilde{V}(\alpha, \psi)$ satisfies the conditions of a valuation by induction on the length of $\psi$, so we can prove it in $\mathrm{I}\mathrm{\Sigma}^0_1$ (Note that $(\tilde{W}, \xrightarrow{d}, \tilde{V})$ is finite). We show it in the case of $\psi\equiv\Box\theta$, that is, for all $\alpha\in\tilde{W}$,
\begin{equation*}
\tilde{V}(\alpha, \Box\theta)=1\iff\forall \beta\in \tilde{W}(\alpha\xrightarrow{d}\beta\Rightarrow\tilde{V}(\beta, \theta)=1).
\end{equation*}

Fix $\alpha\in\tilde{W}$. We have two cases. First, we assume that $\tilde{V}(\alpha, \Box\theta)=0$, i.e., $\neg\mathrm{Pbl}_{\textbf{GL}}(\emptyset, \alpha\rightarrow\Box\theta)$ holds. Then by Lemma~\ref{select}, $\mathrm{Pbl}_{\textbf{GL}}(\emptyset, \alpha\rightarrow\Diamond\neg\theta)$ holds. Moreover, by the axiom $L$, $\neg\mathrm{Pbl}_{\textbf{GL}}(\emptyset, \alpha\rightarrow\Box(\Box\theta\rightarrow\theta))$ holds. Thus $\alpha\land\Diamond(\Box\theta\land\neg\theta)$ is \textbf{GL}-consistent. So by Lemma~\ref{exlem}, there exists $\beta$ such that $\mathrm{Pbl}_{\textbf{GL}}(\emptyset, \beta\rightarrow\Box\theta\land\neg\theta)$ holds. Then, by Proposition~\ref{gl4}, $\alpha\xrightarrow{m}\beta$, and $\mathrm{Pbl}_{\textbf{GL}}(\emptyset, \beta\rightarrow\Box\neg\neg\theta)$ holds. Thus $\alpha\xrightarrow{d}\beta$. Moreover, $\beta$ is \textbf{GL}-consistent since $\alpha\land\Diamond\beta$ is \textbf{GL}-consistent. Thus $\neg\mathrm{Pbl}_{\textbf{GL}}(\emptyset, \beta\rightarrow\theta)$ holds, so $\tilde{V}(\beta, \theta)=0$. 

The other case is when $\tilde{V}(\alpha, \Box\theta)=1$, that is, $\mathrm{Pbl}_{\textbf{GL}}(\emptyset, \alpha\rightarrow\Box\theta)$ holds.  Fix $\beta\in\tilde{W}$ such that $\alpha\xrightarrow{d}\beta$. Then $\mathrm{Pbl}_{\textbf{GL}}(\emptyset, \alpha\rightarrow\Diamond\beta^{\prime})$ holds. Assume that $\mathrm{Pbl}_{\textbf{GL}}(\emptyset, \beta^{\prime}\rightarrow\neg\theta)$ holds. Then $\mathrm{Pbl}_{\textbf{GL}}(\emptyset, \Diamond\beta^{\prime}\rightarrow\neg\Box\theta)$ holds, thus $\mathrm{Pbl}_{\textbf{GL}}(\emptyset, \alpha\rightarrow\neg\Box\theta)$ holds. It is the contradiction that $\alpha$ is \textbf{GL}-consistent. Thus $\mathrm{Pbl}_{\textbf{GL}}(\emptyset, \beta^{\prime}\rightarrow\theta)$ holds. Then $\mathrm{Pbl}_{\textbf{GL}}(\emptyset, \beta\rightarrow\beta^{\prime})$ holds, so $\mathrm{Pbl}_{\textbf{GL}}(\emptyset, \beta\rightarrow\theta)$ holds. 
\end{proof}

From the above, we obtain the following result. 

\begin{thm}[Weak Completeness Theorem for \textbf{GL}, $\mathrm{RCA}_0$]
If $\varphi\in\mathrm{Fml}$ is $\mathbf{GL}$-consistent, then there exist a frame $F$ which is appropriate to $L$ and a valuation $V$ such that $(F, V)$ is a weak model of $\varphi$.
\end{thm}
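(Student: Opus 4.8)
The plan is to assemble the theorem directly from the apparatus built in this section, exactly as the $4\in\mathrm{\Sigma}$ case was assembled from its generalized truth lemma. Given an arbitrary $\mathbf{GL}$-consistent $\varphi\in\mathrm{Fml}$, I would first fix $h=ht(\varphi)$ and $n=ord(\varphi)$, so that $\varphi\in\mathcal{L}_{h,n}$, and form the model $D=(\tilde{W}, \xrightarrow{d}, \tilde{V})$ of the preceding definition. Before doing so I would remark that the auxiliary results Lemma~\ref{select}, Proposition~\ref{bigo}, Lemma~\ref{notempty}, and Lemma~\ref{exlem}, although stated for $\mathbf{K}\mathrm{\Sigma}$ with $\mathrm{\Sigma}\subseteq\{T, B, 4, D\}$, are proved using only the axiom $K$, the propositional tautologies, and the inference rules; hence they hold verbatim for $\mathbf{GL}$ and may be invoked with $\mathrm{Pbl}_{\mathbf{GL}}$ throughout.

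Next I would secure that the frame is legitimate and has the right shape. Since $\varphi$ is $\mathbf{GL}$-consistent, Lemma~\ref{notempty} applied to $\mathbf{GL}$ yields a $\mathbf{GL}$-consistent $\alpha\in C_{h+1, n}$ with $\mathrm{Pbl}_{\mathbf{GL}}(\emptyset, \alpha\rightarrow\varphi)$, so $\alpha\in\tilde{W}$ and in particular $\tilde{W}\neq\emptyset$. By the lemma establishing that $\xrightarrow{d}$ is transitive and that $\tilde{W}$ has no infinite ascending sequences by $\xrightarrow{d}$, and by the definition of appropriateness to $L$, this says precisely that $F=(\tilde{W}, \xrightarrow{d})$ is appropriate to $L$.

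Finally I would invoke the Truth lemma for $\mathbf{GL}$: the triple $(\tilde{W}, \xrightarrow{d}, \tilde{V})$ is a weak Kripke model, i.e.\ $\tilde{V}$ genuinely satisfies the clauses of a valuation restricted to $sub(\varphi)$. Since every $\alpha\in\tilde{W}$ satisfies $\mathrm{Pbl}_{\mathbf{GL}}(\emptyset, \alpha\rightarrow\varphi)$ by construction, we obtain $\tilde{V}(\alpha, \varphi)=1$ for all $\alpha\in\tilde{W}$, so $\varphi$ is true at every world; thus $(F, \tilde{V})$ is the desired weak model of $\varphi$ on a frame appropriate to $L$.

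The substantive work has already been discharged in the lemmas, so the remaining statement is essentially their corollary; the one point worth flagging is where the well-foundedness clause of appropriateness to $L$ really comes from. It is not proved by an induction inside the theorem but is reduced, via Proposition~\ref{fin}, to irreflexivity plus transitivity of the finite relation $\xrightarrow{d}$, the irreflexivity being forced by the witnessing clause in the definition of $\xrightarrow{d}$: an edge $\alpha\xrightarrow{d}\alpha$ would supply $\gamma\in C_{h, n}$ with both $\mathrm{Pbl}_{\mathbf{GL}}(\emptyset, \alpha\rightarrow\Diamond\gamma)$ and $\mathrm{Pbl}_{\mathbf{GL}}(\emptyset, \alpha\rightarrow\Box\neg\gamma)$, contradicting the $\mathbf{GL}$-consistency of $\alpha$. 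This is the only place where the restriction to $\mathrm{RCA}_0$-provable, finite-frame reasoning does real work, and it is exactly what Proposition~\ref{fin} was set up to handle.
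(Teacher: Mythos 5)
Your proposal is correct and follows essentially the same route as the paper, which likewise obtains the theorem by assembling the model $D=(\tilde{W}, \xrightarrow{d}, \tilde{V})$, the lemma giving transitivity and absence of infinite ascending sequences for $\xrightarrow{d}$, and the Truth lemma for $\mathbf{GL}$. Your explicit irreflexivity argument (an edge $\alpha\xrightarrow{d}\alpha$ would yield $\gamma$ with $\mathrm{Pbl}_{\mathbf{GL}}(\emptyset,\alpha\rightarrow\Diamond\gamma)$ and $\mathrm{Pbl}_{\mathbf{GL}}(\emptyset,\alpha\rightarrow\Box\neg\gamma)$, contradicting consistency) and your remark that the Section~3 lemmas carry over to $\mathbf{GL}$ are exactly the details the paper leaves implicit in its appeal to Proposition~\ref{fin} and the definition of $\xrightarrow{d}$.
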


\section{Strong Completeness Theorem using canonical models in second-order arithmetic}
In this section, we will prove that the strong completeness theorem for modal logic using canonical models is equivalent to $\mathrm{ACA}_0$ over $\mathrm{RCA}_0$. In this section, we will assume $\mathrm{\Sigma}\subseteq\{T, B, 4, 5, D, .2\}$ unless otherwise specified.

Basically, we still formalize the strong completeness proofs using canonical models.
However, we cannot collect all completions of a given set of formulas within second-order arithmetic.
Therefore, we use the well-known idea of the ``low completion'' from the computability theory, and make it possible to perform the construction in $\mathrm{ACA}_0$.

In $\mathrm{ACA}_{0}$, we can define some notions for computability theory with Turing jumps (see Simpson \cite{MR2517689} for the details).
Let $X, Y\subseteq\mathbb{N}$. Then, $Y$ is said to be \emph{low relative to $X$} if $\mathrm{TJ}(Y)\leq_{T}\mathrm{TJ}(X)$ (here $\mathrm{TJ}(X)$ denotes the Turing jump of $X$).
If $Y\leq_{T}\mathrm{TJ}(X)$, we may consider a $\mathrm{\mathrm{\Delta}}^{0, X}_2$-index of $Y$ (obtained as a $\mathrm{TJ}(X)$-recursive index of $Y$). Moreover, the statement ``$e$ is a $\mathrm{TJ}(X)$-recursive index of a set'' is arithmetical, and in such a case, we write $\mathrm{\mathrm{\Delta}}^{0, X}_2[e]$ for the set indexed by $e$.
\begin{dfn}[$\mathrm{ACA}_0$]
Let $X\subseteq\mathbb{N}$.
We define the set $L(X)$ by the following:
\begin{equation*}
\begin{split}
L(X)=&\{e\in\mathbb{N}\mid\text{$e$ is a $\mathrm{\mathrm{\Delta}}^{0, X}_2$-index and $\mathrm{\Delta}^{0, X}_2[e]$ is a low set relative to $X$.}\}
\end{split}
\end{equation*}
\end{dfn}
Note that within $\mathrm{ACA}_{0}$ the set $\mathrm{\Delta}^{0, X}_2[e]$ always exists, and thus we usually identify the set $\mathrm{\Delta}^{0, X}_2[e]$ and its index $e$.

\begin{lem}[Low Lindenbaum's Lemma, $\mathrm{ACA}_0$]\label{lindenbaum}
If $\mathrm{\Gamma}\subseteq\mathrm{Fml}$ is $\mathbf{K}\mathrm{\mathrm{\Sigma}}$-consistent, then there exists a low relative to $\mathrm{\Gamma}$ set $\mathrm{\Gamma}^{+}\supseteq\mathrm{\Gamma}$ which is maximally $\mathbf{K}\mathrm{\mathrm{\Sigma}}$-consistent.
\end{lem}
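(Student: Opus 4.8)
The plan is to realize the set of maximally $\mathbf{K}\mathrm{\Sigma}$-consistent completions of $\mathrm{\Gamma}$ as a nonempty $\Pi^{0,\mathrm{\Gamma}}_1$ class, and then extract a member that is low relative to $\mathrm{\Gamma}$ by a Turing-jump-recursive construction, i.e. a relativized low basis argument carried out inside $\mathrm{ACA}_0$. First I would fix an enumeration $\varphi_0, \varphi_1, \dots$ of $\mathrm{Fml}$ and, for a binary string $s$, let $F_s=\{\varphi_i\mid i<lh(s),\ s(i)=1\}\cup\{\neg\varphi_i\mid i<lh(s),\ s(i)=0\}$ be the finite set of commitments coded by $s$. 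I would then define a tree $\hat{T}$ by declaring $s\in\hat{T}$ iff there is no proof of $\bot$ from $\mathrm{\Gamma}\cup F_s$ whose code and whose number of premises used from $\mathrm{\Gamma}$ are both below $lh(s)$. Since membership in $\mathrm{\Gamma}$ is decidable from the oracle $\mathrm{\Gamma}$ and the bounded search is $\mathrm{\Gamma}$-recursive, $\hat{T}$ is a $\mathrm{\Gamma}$-computable binary tree, and its infinite paths are exactly the sets $\mathrm{\Gamma}^{+}\subseteq\mathrm{Fml}$ such that every finite subset of the induced commitments is $\mathbf{K}\mathrm{\Sigma}$-consistent together with $\mathrm{\Gamma}$.

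Next I would verify the two combinatorial facts that make this class useful. The tree $\hat{T}$ is infinite: given a consistent commitment $F_s$, at least one of $\mathrm{\Gamma}\cup F_s\cup\{\varphi_{lh(s)}\}$ and $\mathrm{\Gamma}\cup F_s\cup\{\neg\varphi_{lh(s)}\}$ is again $\mathbf{K}\mathrm{\Sigma}$-consistent, for otherwise $\mathrm{\Gamma}\cup F_s$ would prove both $\varphi_{lh(s)}$ and $\neg\varphi_{lh(s)}$, contradicting its consistency; starting from the empty commitment, which is consistent because $\mathrm{\Gamma}$ is, this produces arbitrarily long nodes. Conversely, any infinite path $\mathrm{\Gamma}^{+}$ is \emph{maximally} $\mathbf{K}\mathrm{\Sigma}$-consistent: it is consistent because any proof of $\bot$ from $\mathrm{\Gamma}^{+}$ would use only finitely many of its formulas, all decided below some level, contradicting membership in $\hat{T}$ at that level; it is complete since each $\varphi_i$ is decided by $s(i)$; and it is closed under deduction, because completeness and consistency force $\psi\in\mathrm{\Gamma}^{+}$ whenever $\mathrm{Pbl}_{\mathbf{K}\mathrm{\Sigma}}(\mathrm{\Gamma}^{+},\psi)$, using that $\neg\psi$ occurs in the enumeration. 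Moreover $\mathrm{\Gamma}\subseteq\mathrm{\Gamma}^{+}$ automatically, since $\varphi_i\in\mathrm{\Gamma}$ makes $\mathrm{\Gamma}\cup\{\neg\varphi_i\}$ inconsistent and hence forces $s(i)=1$ along any path.

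With the class in place, I would apply the relativized low basis theorem. Working in $\mathrm{ACA}_0$, where $\mathrm{TJ}(\mathrm{\Gamma})$ exists, I would run the standard forcing construction along $\hat{T}$, using conditions that keep the residual subtree infinite and deciding at stage $e$ whether the generic path makes the $e$-th jump question converge. The predicate ``condition $\sigma$ forces the $e$-th jump computation to halt'' is decidable from $\mathrm{TJ}(\mathrm{\Gamma})$, so the construction is $\mathrm{TJ}(\mathrm{\Gamma})$-recursive and yields a path $\mathrm{\Gamma}^{+}$ together with a $\mathrm{TJ}(\mathrm{\Gamma})$-recursive index for $\mathrm{TJ}(\mathrm{\Gamma}^{+})$; this exhibits $\mathrm{\Gamma}^{+}$ as a $\mathrm{\Delta}^{0,\mathrm{\Gamma}}_2$ set that is low relative to $\mathrm{\Gamma}$, i.e. (an index in) $L(\mathrm{\Gamma})$.

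The main obstacle is precisely this last step: one must formalize the jump control of the low basis construction inside $\mathrm{ACA}_0$, checking that the forcing conditions, the infinitude of the residual subtrees, and the inequality $\mathrm{TJ}(\mathrm{\Gamma}^{+})\leq_{T}\mathrm{TJ}(\mathrm{\Gamma})$ are all captured by the arithmetical apparatus of $\mathrm{\Delta}^{0,\mathrm{\Gamma}}_2$-indices set up above. By contrast, the combinatorial verifications that $\hat{T}$ is infinite and that its paths are exactly the maximal completions containing $\mathrm{\Gamma}$ are routine and fit comfortably within $\mathrm{RCA}_0$ once the tree is given.
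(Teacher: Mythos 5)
Your proposal is correct and is essentially the paper's own argument: the paper's proof is a one-line appeal to ``the low basis theorem and the usual proof of Lindenbaum's lemma'' (citing Simpson), and your construction of the $\mathrm{\Gamma}$-computable tree of finitely-consistent commitments followed by the relativized, jump-controlled low basis argument in $\mathrm{ACA}_0$ is precisely the detailed version of that citation.
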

\begin{proof}
By the low basis theorem and the usual proof of Lindenbaum's lemma. See Simpson \cite{MR2517689}.
\end{proof}

\begin{lem}[$\mathrm{ACA}_0$]\label{vvv}
Let $\psi$ be a formula, $\mathrm{\Gamma}$ be $\mathbf{K}\mathrm{\mathrm{\Sigma}}$-consistent, and $E$ be the maximally $\mathbf{K}\mathrm{\mathrm{\Sigma}}$-consistent $low$ relative to $\mathrm{\Gamma}$ set. If $\Box\psi\not\in E$, then there exists a maximally $\mathbf{K}\mathrm{\mathrm{\Sigma}}$-consistent $low$ relative to $\mathrm{\Gamma}$ set $\tilde{E}$ such that $\{\neg\psi\}\cup\{\xi|\Box\xi\in E\}\cup\mathrm{\Gamma}\subseteq\tilde{E}$.
\end{lem}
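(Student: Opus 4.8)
The plan is to carry out the standard modal existence lemma inside the low-set framework, the extra work being to keep the resulting completion low relative to the fixed $\mathrm{\Gamma}$ rather than relative to the set we actually complete. Put $E^{\Box}=\{\xi\in\mathrm{Fml}\mid\Box\xi\in E\}$ and $\Delta=\{\neg\psi\}\cup E^{\Box}\cup\mathrm{\Gamma}$. Because the canonical worlds are completions in which the members of $\mathrm{\Gamma}$ figure as axioms (so that $\Box\gamma\in E$ for each $\gamma\in\mathrm{\Gamma}$), we have $\mathrm{\Gamma}\subseteq E^{\Box}$ and hence $\Delta=\{\neg\psi\}\cup E^{\Box}$; thus once we show $\Delta\subseteq\tilde E$ the required inclusion $\{\neg\psi\}\cup E^{\Box}\cup\mathrm{\Gamma}\subseteq\tilde E$ follows at once. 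Note also $\Delta\leq_{T}E\oplus\mathrm{\Gamma}$ (to test $\chi\in\Delta$ one checks $\chi=\neg\psi$, or $\Box\chi\in E$, or $\chi\in\mathrm{\Gamma}$), so $E^{\Box}$ and $\Delta$ are available by recursive comprehension from $E$ and $\mathrm{\Gamma}$.

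First I would show $\Delta$ is $\mathbf{K}\mathrm{\Sigma}$-consistent. If not, then $\mathrm{Pbl}_{\mathbf{K}\mathrm{\Sigma}}(\Delta,\bot)$ yields $\xi_{1},\dots,\xi_{m}\in E^{\Box}$ with $\mathrm{Pbl}_{\mathbf{K}\mathrm{\Sigma}}(\emptyset,(\xi_{1}\wedge\dots\wedge\xi_{m})\rightarrow\psi)$. Applying Necessitation, the axiom $K$, and the provable commutation of $\Box$ with finite conjunction, one obtains $\mathrm{Pbl}_{\mathbf{K}\mathrm{\Sigma}}(\emptyset,(\Box\xi_{1}\wedge\dots\wedge\Box\xi_{m})\rightarrow\Box\psi)$. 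Since each $\Box\xi_{i}\in E$ and $E$ is closed under deduction, $\Box\psi\in E$, contradicting $\Box\psi\notin E$. This argument is purely finitary (the existence of the offending proof is $\mathrm{\Sigma}^{0}_{1}$) and needs nothing beyond $\mathrm{RCA}_{0}$.

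Next I would apply the Low Lindenbaum Lemma~\ref{lindenbaum} to $\Delta$, obtaining a maximally $\mathbf{K}\mathrm{\Sigma}$-consistent $\tilde E\supseteq\Delta$ with $\mathrm{TJ}(\tilde E)\leq_{T}\mathrm{TJ}(\Delta)$. To conclude that $\tilde E$ is low relative to $\mathrm{\Gamma}$, I use that $E$ is low relative to $\mathrm{\Gamma}$, i.e.\ $\mathrm{TJ}(E)\leq_{T}\mathrm{TJ}(\mathrm{\Gamma})$. From $\Delta\leq_{T}E\oplus\mathrm{\Gamma}$, monotonicity of the jump, and $\mathrm{TJ}(E\oplus\mathrm{\Gamma})\equiv_{T}\mathrm{TJ}(E)\oplus\mathrm{TJ}(\mathrm{\Gamma})$, I get $\mathrm{TJ}(\Delta)\leq_{T}\mathrm{TJ}(E)\oplus\mathrm{TJ}(\mathrm{\Gamma})\leq_{T}\mathrm{TJ}(\mathrm{\Gamma})$. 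Chaining the inequalities, $\mathrm{TJ}(\tilde E)\leq_{T}\mathrm{TJ}(\Delta)\leq_{T}\mathrm{TJ}(\mathrm{\Gamma})$, so $\tilde E$ is low relative to $\mathrm{\Gamma}$; and $\{\neg\psi\}\cup E^{\Box}\cup\mathrm{\Gamma}=\Delta\subseteq\tilde E$, as required.

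The main obstacle is exactly this transfer of lowness. One is tempted to run the low basis theorem over an oracle that already computes $\Delta$ (namely $\mathrm{TJ}(\mathrm{\Gamma})$), but that only yields a set low relative to $\mathrm{TJ}(\mathrm{\Gamma})$---``low${}_{2}$'' over $\mathrm{\Gamma}$---which is too weak. The correct order is to complete first, getting lowness relative to $\Delta$, and then collapse back via the jump computation above; the point requiring care is that every Turing-jump manipulation invoked is legitimate in $\mathrm{ACA}_{0}$, where $\mathrm{TJ}$ is a total operation on sets and the relations $\leq_{T}$ and $\equiv_{T}$ are arithmetical.
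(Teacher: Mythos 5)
Your proposal has the same skeleton as the paper's proof (show that $\Delta=\{\neg\psi\}\cup\{\xi\mid\Box\xi\in E\}\cup\mathrm{\Gamma}$ is $\mathbf{K}\mathrm{\Sigma}$-consistent via a necessitation argument, then complete it with Lemma~\ref{lindenbaum}), but two of your steps are genuine gaps, and they sit exactly at the two places where the real work lies. The first is your claim that $\Box\gamma\in E$ for every $\gamma\in\mathrm{\Gamma}$, hence $\mathrm{\Gamma}\subseteq E^{\Box}$ and $\Delta=\{\neg\psi\}\cup E^{\Box}$. Nothing in the hypotheses of the lemma, nor in Definition~\ref{cano} where it is applied, gives this: a world $E$ is only required to be maximally $\mathbf{K}\mathrm{\Sigma}$-consistent, low relative to $\mathrm{\Gamma}$, and (in the canonical model) to contain $\mathrm{\Gamma}$; it need not contain $\Box\gamma$. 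For instance, with $\mathrm{\Gamma}=\{\alpha_0\}$ there are maximally $\mathbf{K}$-consistent sets containing both $\alpha_0$ and $\neg\Box\alpha_0$. Your necessitation step is legitimate only because it is applied to a theorem of the pure logic, and it lands on a pure theorem only because you first deleted the $\mathrm{\Gamma}$-premises via this unsupported claim; your parenthetical justification (``the members of $\mathrm{\Gamma}$ figure as axioms'') reads $\mathrm{Pbl}_{\mathbf{K}\mathrm{\Sigma}}(\mathrm{\Gamma},\cdot)$ as a global consequence relation, whereas the paper defines it through $\mathrm{Prf}_{\mathbf{K}\mathrm{\Sigma}}(\emptyset,p)$ applied to $\bigwedge_i\psi_i\rightarrow\varphi$ with $\psi_i\in\mathrm{\Gamma}$. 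The paper's own proof takes a different route here: it keeps the $\mathrm{\Gamma}$-premises inside the provability predicate, passing from $\mathrm{Pbl}_{\mathbf{K}\mathrm{\Sigma}}(\mathrm{\Gamma},(\xi_1\land\dots\land\xi_n)\rightarrow\psi)$ to $\mathrm{Pbl}_{\mathbf{K}\mathrm{\Sigma}}(\mathrm{\Gamma},(\Box\xi_1\land\dots\land\Box\xi_n)\rightarrow\Box\psi)$ and then using $\mathrm{\Gamma}\subseteq E$ plus deductive closure to get $\Box\psi\in E$. How necessitation is permitted to act across the $\mathrm{\Gamma}$-premises is precisely the delicate point of this lemma; your proposal does not resolve it but converts it into an assumption that is false in general.

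The second gap is the recursion-theoretic identity $\mathrm{TJ}(E\oplus\mathrm{\Gamma})\equiv_{T}\mathrm{TJ}(E)\oplus\mathrm{TJ}(\mathrm{\Gamma})$ on which your lowness transfer rests. Only the direction $\mathrm{TJ}(E\oplus\mathrm{\Gamma})\geq_{T}\mathrm{TJ}(E)\oplus\mathrm{TJ}(\mathrm{\Gamma})$ holds in general; by Sacks splitting there are low sets $A,B$ with $A\oplus B\equiv_{T}\mathrm{TJ}(\emptyset)$, so that $\mathrm{TJ}(A\oplus B)\equiv_{T}\mathrm{TJ}(\mathrm{TJ}(\emptyset))$ strictly exceeds $\mathrm{TJ}(A)\oplus\mathrm{TJ}(B)\equiv_{T}\mathrm{TJ}(\emptyset)$. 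Worse, the same example (with $\mathrm{\Gamma}=A$, $E=B$) shows that the bound you want, $\mathrm{TJ}(\Delta)\leq_{T}\mathrm{TJ}(\mathrm{\Gamma})$, is simply not derivable from the paper's literal definition of relative lowness, $\mathrm{TJ}(E)\leq_{T}\mathrm{TJ}(\mathrm{\Gamma})$. What makes the transfer work is the stronger property that the relativized low basis theorem actually delivers, namely $\mathrm{TJ}(E\oplus\mathrm{\Gamma})\leq_{T}\mathrm{TJ}(\mathrm{\Gamma})$; from that, $\Delta\leq_{T}E\oplus\mathrm{\Gamma}$ and monotonicity of the jump give $\mathrm{TJ}(\Delta)\leq_{T}\mathrm{TJ}(E\oplus\mathrm{\Gamma})\leq_{T}\mathrm{TJ}(\mathrm{\Gamma})$ with no appeal to your identity, and one then needs the completion $\tilde{E}$ of $\Delta$ to satisfy the analogous join condition relative to $\mathrm{\Gamma}$. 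You are right that this transfer requires an argument at all---the paper's proof elides it, simply invoking Lemma~\ref{lindenbaum} and asserting the output is low relative to $\mathrm{\Gamma}$---but the argument you supply is not sound as stated.
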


\begin{proof}
Suppose $\Box\psi\not\in E$. We show that $\{\neg\psi\}\cup\{\xi\mid\Box\xi\in E\}\cup\mathrm{\Gamma}$ is $\mathbf{K}\mathrm{\mathrm{\Sigma}}$-consistent. Assume that $\{\neg\psi\}\cup\{\xi\mid\Box\xi\in E\}\cup\mathrm{\Gamma}$ is $\mathbf{K}\mathrm{\mathrm{\Sigma}}$-inconsistent, then there exist $\Box\xi_1, \dots\Box\xi_n\in E$ such that $\mathrm{Pbl}_{\mathbf{K}\mathrm{\mathrm{\Sigma}}}(\mathrm{\Gamma}, (\xi_1\land\dots\land\xi_n\land\neg\psi)\rightarrow\bot)$, that is, $\mathrm{Pbl}_{\mathbf{K}\mathrm{\mathrm{\Sigma}}}(\mathrm{\Gamma}, (\xi_1\land\dots\land\xi_n)\rightarrow\psi)$ holds.  Thus $\mathrm{Pbl}_{\mathbf{K}\mathrm{\mathrm{\Sigma}}}(\mathrm{\Gamma}, (\Box\xi_1\land\dots\land\Box\xi_n)\rightarrow\Box\psi)$ holds. Thus $\Box\psi\in E$, it is a contradiction. Therefore by Low Lindenbaum's Lemma, there exists a low relative to $\mathrm{\Gamma}$ set $\tilde{E}$ such that $\{\neg\psi\}\cup\{\xi|\Box\xi\in E\}\cup\mathrm{\Gamma}\subseteq\tilde{E}$.
\end{proof}

\begin{dfn}[$\mathrm{ACA}_0$]\label{cano}
Let $\mathrm{\Gamma}$ be $\mathbf{K}\mathrm{\mathrm{\Sigma}}$-consistent. A $\mathbf{K}\mathrm{\mathrm{\Sigma}}$-model $M^{\mathbf{K}\mathrm{\mathrm{\Sigma}}}$ is a tuple $(W^{\mathbf{K}\mathrm{\mathrm{\Sigma}}}, R^{\mathbf{K}\mathrm{\mathrm{\Sigma}}}, V^{\mathbf{K}\mathrm{\mathrm{\Sigma}}})$ satisfying the following conditions:
\begin{itemize}
    \item $W^{\mathbf{K}\mathrm{\mathrm{\Sigma}}}=\{e\in L(\mathrm{\Gamma})\mid\mathrm{\Gamma}\subseteq\mathrm{\Delta}^{0, X}_2[e] \text{ and $\mathrm{\Delta}^{0, X}_2[e]$ is maximally $\mathbf{K}\mathrm{\mathrm{\Sigma}}$-consistent.}\}$,
    \item $R^{\mathbf{K}\mathrm{\mathrm{\Sigma}}}=\{(e, \tilde{e})\in W^{\mathbf{K}\mathrm{\mathrm{\Sigma}}}\times W^{\mathbf{K}\mathrm{\mathrm{\Sigma}}}\mid\forall\varphi\in \mathrm{Fml}(\Box\varphi\in \mathrm{\Delta}^{0, X}_2[e]\rightarrow\varphi\in\mathrm{\Delta}^{0, X}_2[\tilde{e}]\}$,
    \item $V^{\mathbf{K}\mathrm{\mathrm{\Sigma}}}(e, \varphi)=1\iff\varphi\in \mathrm{\Delta}^{0, X}_2[e]$ for any $e\in W^{\mathbf{K}\mathrm{\mathrm{\Sigma}}}$ and any $\varphi\in\mathrm{Fml}$.
\end{itemize}
\end{dfn}

To simplify the notations, we may write $E, E', \tilde{E}$,\dots for the sets $\mathrm{\Delta}^{0, X}_2[e]$, $\mathrm{\Delta}^{0, X}_2[e']$, $\mathrm{\Delta}^{0, X}_2[\tilde{e}]$,\dots.

\begin{prop}[$\mathrm{RCA}_0$]\label{esp}
Let $\varphi, \psi\in\mathrm{Fml}$. Let $E$ be maximally $\mathbf{K}\mathrm{\mathrm{\Sigma}}$-consistent. Then the following holds:
\begin{enumerate}
    \item $\neg\varphi\in E\iff\varphi\not\in E$,
    \item $\varphi\rightarrow\psi\in E\iff(\varphi\not\in E\lor \psi\in E)$,
    \item $\varphi\land\psi\in E\iff\varphi\in E\land \psi\in E$.
\end{enumerate}
\end{prop}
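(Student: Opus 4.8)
The plan is to read all three equivalences off the three defining clauses of maximal $\mathbf{K}\mathrm{\Sigma}$-consistency — consistency ($\neg\mathrm{Pbl}_{\mathbf{K}\mathrm{\Sigma}}(E,\bot)$), completeness (for every $\varphi$, either $\varphi\in E$ or $\neg\varphi\in E$), and closure under deduction ($\mathrm{Pbl}_{\mathbf{K}\mathrm{\Sigma}}(E,\chi)\to\chi\in E$) — together with the observation that every classical propositional tautology is derivable from $\emptyset$ in $\mathbf{K}\mathrm{\Sigma}$. The latter is the engine that converts a membership hypothesis into a membership conclusion: whenever $\chi_1,\dots,\chi_k\in E$ and $\chi_1\land\dots\land\chi_k\to\chi$ is a $\mathbf{K}\mathrm{\Sigma}$-theorem, the definition of $\mathrm{Pbl}_{\mathbf{K}\mathrm{\Sigma}}$ immediately yields $\mathrm{Pbl}_{\mathbf{K}\mathrm{\Sigma}}(E,\chi)$, and closure under deduction then gives $\chi\in E$. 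Throughout I read $\neg\varphi$ and $\varphi\land\psi$ as their usual abbreviations in terms of $\to$ and $\bot$.

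For (1), I would prove the forward direction from consistency: if $\neg\varphi\in E$ and also $\varphi\in E$, then the tautology $\varphi\land\neg\varphi\to\bot$ gives $\mathrm{Pbl}_{\mathbf{K}\mathrm{\Sigma}}(E,\bot)$, contradicting consistency; hence $\varphi\notin E$. The backward direction is exactly completeness: if $\varphi\notin E$, then the disjunction $\varphi\in E\lor\neg\varphi\in E$ forces $\neg\varphi\in E$.

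For (2), assuming $\varphi\to\psi\in E$, I would argue that if in addition $\varphi\in E$ then modus ponens — i.e. the tautology $(\varphi\land(\varphi\to\psi))\to\psi$ — gives $\mathrm{Pbl}_{\mathbf{K}\mathrm{\Sigma}}(E,\psi)$ and hence $\psi\in E$, so $\varphi\notin E\lor\psi\in E$. Conversely, if $\psi\in E$ I would invoke the first listed axiom $\psi\to(\varphi\to\psi)$, and if $\varphi\notin E$ I would use (1) to get $\neg\varphi\in E$ and then the tautology $\neg\varphi\to(\varphi\to\psi)$; either way $\varphi\to\psi\in E$. Part (3) is wholly analogous: the tautologies $\varphi\land\psi\to\varphi$ and $\varphi\land\psi\to\psi$ handle the forward direction, and $\varphi\to(\psi\to(\varphi\land\psi))$ handles the converse.

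I do not expect a genuine obstacle here; the only point requiring care is the bookkeeping with the proof predicate $\mathrm{Pbl}_{\mathbf{K}\mathrm{\Sigma}}$ — namely verifying that it is closed under the propositional reasoning and the single applications of modus ponens invoked above, and that each requisite tautology is indeed a $\mathbf{K}\mathrm{\Sigma}$-theorem provable from $\emptyset$. Since each such verification concerns one explicitly given tautology together with finitely many premises drawn from $E$, the argument is finitary and fits within $\mathrm{I}\mathrm{\Sigma}^0_1$, so the whole proposition is available in $\mathrm{RCA}_0$ as stated.
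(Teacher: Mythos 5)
Your proof is correct and matches the paper's approach: the paper's own proof is just the single line ``It is clear since $E$ is $\mathbf{K}\mathrm{\Sigma}$-consistent,'' and your argument is exactly the standard elaboration the authors leave implicit, deriving each equivalence from consistency, completeness, and closure under deduction together with the relevant propositional tautologies fed through $\mathrm{Pbl}_{\mathbf{K}\mathrm{\Sigma}}$. Note only that you (rightly) read the paper's clause ``complete'' as $\varphi\in E\lor\neg\varphi\in E$ rather than the literal $\varphi\in E\lor\varphi\not\in E$ printed in Section~2, which is evidently a typo and is the reading your backward direction of (1) requires.
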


\begin{proof}
It is clear since $E$ is $\mathbf{K}\mathrm{\mathrm{\Sigma}}$-consistent.
\end{proof}

\begin{prop}[$\mathrm{RCA}_0$]\label{zimei}
For $\mathbf{K}\mathrm{\mathrm{\Sigma}}$-model $M^{\mathbf{K}\mathrm{\mathrm{\Sigma}}}$, the following holds:
\begin{equation*}
R^{\mathbf{K}\mathrm{\mathrm{\Sigma}}}=\{(e, e^{\prime})\in W\times W\mid\forall\varphi\in\mathrm{Fml}(\varphi\in \tilde{E}\rightarrow\Diamond\varphi\in E)\}
\end{equation*}
\end{prop}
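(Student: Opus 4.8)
The plan is to prove that the two descriptions of $R^{\mathbf{K}\mathrm{\Sigma}}$ define the same relation by establishing, for any two maximally $\mathbf{K}\mathrm{\Sigma}$-consistent sets $E$ and $\tilde E$, the biconditional
\[
\forall\varphi\in\mathrm{Fml}\,(\Box\varphi\in E\to\varphi\in\tilde E)\iff\forall\varphi\in\mathrm{Fml}\,(\varphi\in\tilde E\to\Diamond\varphi\in E).
\]
Since $\Diamond\varphi$ is the abbreviation $\neg\Box\neg\varphi$, the whole argument reduces to propositional bookkeeping inside maximally consistent sets, and Proposition~\ref{esp} (together with the fact that such sets are closed under deduction) is the only tool required. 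In particular nothing beyond $\mathrm{RCA}_0$ is needed, as is already asserted in the statement.

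First I would prove the left-to-right direction. Assume $\Box\varphi\in E\to\varphi\in\tilde E$ for all $\varphi$, fix $\varphi$ with $\varphi\in\tilde E$, and suppose toward a contradiction that $\Diamond\varphi\notin E$. Unfolding $\Diamond\varphi$ as $\neg\Box\neg\varphi$ and applying Proposition~\ref{esp}(1) gives $\Box\neg\varphi\in E$. The hypothesis, instantiated at $\neg\varphi$, then yields $\neg\varphi\in\tilde E$; but $\varphi\in\tilde E$ as well, so Proposition~\ref{esp}(1) applied to $\tilde E$ gives a contradiction. Hence $\Diamond\varphi\in E$, as desired.

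For the converse direction I would assume $\varphi\in\tilde E\to\Diamond\varphi\in E$ for all $\varphi$, fix $\varphi$ with $\Box\varphi\in E$, and suppose $\varphi\notin\tilde E$. By Proposition~\ref{esp}(1) we get $\neg\varphi\in\tilde E$, and the hypothesis at $\neg\varphi$ gives $\Diamond\neg\varphi\in E$, i.e. $\neg\Box\neg\neg\varphi\in E$, so $\Box\neg\neg\varphi\notin E$ by Proposition~\ref{esp}(1). The only point requiring care is the double negation here: since $\varphi\leftrightarrow\neg\neg\varphi$ is a theorem, Necessitation together with the axiom $K$ makes $\Box\varphi\leftrightarrow\Box\neg\neg\varphi$ derivable, so $\mathrm{Pbl}_{\mathbf{K}\mathrm{\Sigma}}(\emptyset,\Box\varphi\to\Box\neg\neg\varphi)$ holds; as $E$ is closed under deduction and contains $\Box\varphi$, we obtain $\Box\neg\neg\varphi\in E$, contradicting the above. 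Thus $\varphi\in\tilde E$.

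The argument is essentially the standard dualization of the accessibility relation on maximally consistent sets, and I expect no serious obstacle. The main (minor) point to watch is the handling of the extra negation introduced when rewriting $\Diamond$ via $\neg\Box\neg$: one must invoke closure under deduction to collapse $\Box\neg\neg\varphi$ to $\Box\varphi$, rather than treating the two as literally identical formulas. Everything else is a direct application of Proposition~\ref{esp}, so the equivalence of the two presentations of $R^{\mathbf{K}\mathrm{\Sigma}}$ follows immediately.
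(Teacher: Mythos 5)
Your proof is correct and matches the paper's intent: the paper dismisses this as ``clear by the definition of $R^{\mathbf{K}\mathrm{\Sigma}}$,'' and what you have written out is exactly the routine dualization argument (unfolding $\Diamond$ as $\neg\Box\neg$, applying Proposition~\ref{esp}(1), and using closure under deduction to pass between $\Box\varphi$ and $\Box\neg\neg\varphi$) that the authors treat as immediate. Your care with the double-negation step is a sensible addition, but there is no substantive difference in approach.
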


\begin{proof}
It is clear by the definition of $R^{\mathbf{K}\mathrm{\mathrm{\Sigma}}}$.
\end{proof}

\begin{lem}[$\mathrm{ACA}_0$]\label{dependson}
Let $W=W^{\mathbf{K}\mathrm{\mathrm{\Sigma}}}$ and $R=R^{\mathbf{K}\mathrm{\mathrm{\Sigma}}}$. Then $(W, R)$ is appropriate to $\mathrm{\Sigma}$.
\end{lem}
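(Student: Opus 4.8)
The plan is to verify, case by case according to which of $T,B,4,5,D,.2$ lies in $\Sigma$, that the canonical relation $R=R^{\mathbf{K}\Sigma}$ satisfies the corresponding frame condition from the definition of ``appropriate to $\Sigma$''. Throughout I will use that every element $E\in W=W^{\mathbf{K}\Sigma}$ is maximally $\mathbf{K}\Sigma$-consistent, hence closed under deduction and containing every substitution instance of the axioms in $\Sigma$; the Boolean behaviour of membership is governed by Proposition~\ref{esp}, and I will pass freely between the two descriptions of $R$ given in Definition~\ref{cano} and Proposition~\ref{zimei}, namely $ER\tilde E\iff\forall\varphi(\Box\varphi\in E\to\varphi\in\tilde E)\iff\forall\varphi(\varphi\in\tilde E\to\Diamond\varphi\in E)$. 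The cases split into those that merely rearrange the worlds already present ($T,B,4,5$) and those that must manufacture a new world ($D,.2$).

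For the first group the computations are the standard ones. For $T$: given $\Box\varphi\in E$, the instance $\Box\varphi\to\varphi$ of $T$ lies in $E$, so $\varphi\in E$; hence $ERE$ and $R$ is reflexive. For $4$: if $ERF$, $FRG$ and $\Box\varphi\in E$, then $\Box\Box\varphi\in E$ by the instance of $4$, so $\Box\varphi\in F$ and then $\varphi\in G$, giving $ERG$. For $B$ I argue through the $\Diamond$-description: if $ERF$ and $\psi\in E$, then $\Box\Diamond\psi\in E$ by the instance of $B$, whence $\Diamond\psi\in F$, which is exactly $FRE$. For $5$: from $ERF$, $ERG$ and $\psi\in G$ one gets $\Diamond\psi\in E$ (by $ERG$), then $\Box\Diamond\psi\in E$ by the instance of $5$, and finally $\Diamond\psi\in F$, i.e.\ $FRG$; so $R$ is Euclidean. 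Each of these is a finite arithmetical verification and causes no difficulty in $\mathrm{ACA}_0$.

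The real work is in the second group, where a fresh maximally $\mathbf{K}\Sigma$-consistent set must be produced and, crucially, shown to belong to $W$ — that is, to be low relative to $\Gamma$, to contain $\Gamma$, and to be maximally $\mathbf{K}\Sigma$-consistent. Seriality ($D$) is clean: since $\neg\Box\bot$ is provable in $\mathbf{K}D$ (combine the instance $\Box\bot\to\Diamond\bot$ with $\mathrm{Pbl}_{\mathbf{K}\Sigma}(\emptyset,\Diamond\bot\leftrightarrow\bot)$), we have $\Box\bot\notin E$ for every $E\in W$, so Lemma~\ref{vvv} applied with $\psi\equiv\bot$ directly yields a set $\tilde E$ that is low relative to $\Gamma$, maximally consistent, and contains $\{\xi\mid\Box\xi\in E\}\cup\Gamma$; the first inclusion gives $ER\tilde E$ and the second places $\tilde E$ in $W$. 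For directedness ($.2$), given $ERF$ and $ERG$ I would form $\Delta=\{\chi\mid\Box\chi\in F\}\cup\{\chi\mid\Box\chi\in G\}\cup\Gamma$ and show it is $\mathbf{K}\Sigma$-consistent, then extend it by Low Lindenbaum (Lemma~\ref{lindenbaum}) to a world $H\in W$ with $FRH$ and $GRH$. The consistency argument follows the classical pattern: an inconsistency produces, after collecting conjuncts, formulas with $\Box a\in F$ and $\Box b\in G$; from $\Box a\in F$ and $ERF$ one gets $\Diamond\Box a\in E$, whence $\Box\Diamond a\in E$ by the instance of $.2$, whence $\Diamond a\in G$ by $ERG$, which contradicts the $\Box\neg a\in G$ obtained from $\Box b\in G$.

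The step I expect to be the main obstacle is exactly this directedness case. The delicate point is the presence of the flat $\Gamma$-conjuncts in $\Delta$: members of $\Gamma$ need not sit under any $\Box$ in $F$ or $G$, so they cannot simply be absorbed into the conjunctions $a$ and $b$ to which necessitation is applied, and the bookkeeping that reduces the inconsistency to a clash of the form above must be organised with care. Secondly, in both the serial and the directed cases one must be sure that Lemma~\ref{vvv} and Lemma~\ref{lindenbaum} genuinely deliver a set that is low relative to $\Gamma$, contains $\Gamma$, and is maximally consistent; this is precisely where $\mathrm{ACA}_0$ (via the low basis theorem underlying Lemma~\ref{lindenbaum}) is needed, and it is the reason the construction escapes $\mathrm{RCA}_0$. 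The remaining bookkeeping — that each verification is arithmetical and that the finitely many axioms in $\Sigma$ are treated uniformly — keeps the whole argument inside $\mathrm{ACA}_0$.
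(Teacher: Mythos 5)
Your proposal is correct and takes essentially the same approach as the paper's proof: the $T$, $B$, $4$, $5$ cases are the same instance-chasing arguments through Proposition~\ref{zimei}, the $D$ case likewise derives $\Box\bot\notin E$ and applies Lemma~\ref{vvv} with $\psi\equiv\bot$, and the $.2$ case proves consistency of the union of the two box-contents and extends it to a world via the low Lindenbaum construction (the paper lands the final contradiction in $E_0$ where you land it in $G$, an immaterial variation). Even the point you flag as delicate --- the flat $\Gamma$-conjuncts in the $.2$ extension --- is treated no more carefully in the paper, which simply cites Lemma~\ref{vvv} to absorb $\Gamma$ into the new world.
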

\begin{proof}
We will check frame condtions.
Let $\varphi\in\mathrm{Fml}$, and let $e\in W$.

($T$): Let $e\in W$. Assume that $\Box\varphi\in E$. Then by $T\in\mathrm{\Sigma}$, that is, $T\in E$, $\Box\varphi\rightarrow\varphi\in E$, so $\varphi\in E$. Thus $eRe$ holds.

($B$): Let $eR\tilde{e}$. Assume that $\varphi\in E$, then by $B\in\mathrm{\Sigma}$, that is, $B\in E$, $\varphi\rightarrow\Box\Diamond\varphi\in E$, so $\Box\Diamond\varphi\in E$. By $eR\tilde{e}$, $\Diamond\varphi\tilde{E}$. Thus $\tilde{e}Re$ holds.

($4$): Let $e_0, e_1, e_2\in W$ such that $e_0Re_1$ and $e_1Re_2$. Assume that $\Box\varphi\in E_0$, then by $4\in\mathrm{\Sigma}$, that is, $4\in E_0$, so $\Box\Box\varphi\in E_0$. Then $\varphi\in E_2$. Thus $e_0Re_2$ holds.

($5$): Let $e_0, e_1, e_2\in W$ such that $e_0Re_1$ and $e_0Re_2$. Assume that $\varphi\in E_2$, then $\Diamond\varphi\in E_0$. By $5\in\mathrm{\Sigma}$, that is, $5\in E_0$, $\Box\Diamond\varphi\in E_0$. Thus $\Diamond\varphi\in E_1$. Thus $e_1Re_2$ holds.

($D$): Let $e\in W$. Note that $\top\in E$. Then $\Box\neg\bot\in E$. By $D\in\mathrm{\Sigma}$, that is, $D\in E$, $\neg\Box\bot\in E$. Then $\Box\bot\not\in E$. By Lemma~\ref{vvv}, there exists $\tilde{e}\in W$ such that $\{\top\}\cup\{\xi\mid\Box\xi\in E\}\cup\mathrm{\Gamma}\subseteq\tilde{E}$. Thus $eR\tilde{e}$ holds.

($.2$): Let $e_0, e_1, e_2\in W$ such that $e_0Re_1$ and $e_1Re_2$. We show that $\{\xi\mid\Box\xi\in E_1\}\cup\{\delta\mid\Box\delta\in E_2\}$ is $\mathbf{K}\mathrm{\mathrm{\Sigma}}$-consistent. Assume that $\{\xi\mid\Box\xi\in E_1\}\cup\{\delta\mid\Box\delta\in E_2\}$ is $\mathbf{K}\mathrm{\mathrm{\Sigma}}$-inconsistent, there exist some $\Box\xi_1, \dots, \Box\xi_n\in E_1$ and $\Box\delta_1, \dots, \Box\delta_m\in E_2$ such that $\mathrm{Pbl}_{\mathbf{K}\mathrm{\mathrm{\Sigma}}}(\emptyset, (\xi_1\land\dots\land\xi_n\land\delta_1\land\dots\land\delta_m)\rightarrow\bot)$ holds. Since $\Box(\xi_1\land\dots\land\xi_n)\in E_1$ and $e_0Re_1$, $\Diamond\Box(\xi_1\land\dots\land\xi_n)\in E_0$. On the other hands, $\mathrm{Pbl}_{\mathbf{K}\mathrm{\mathrm{\Sigma}}}(\emptyset, (\xi_1\land\dots\land\xi_n)\rightarrow\neg(\delta_1\land\dots\land\delta_m))$ holds, so $\mathrm{Pbl}_{\mathbf{K}\mathrm{\mathrm{\Sigma}}}(\emptyset, \Diamond\Box(\xi_1\land\dots\land\xi_n)\rightarrow\Diamond\Box\neg(\delta_1\land\dots\land\delta_m))$ holds. Thus $\mathrm{Pbl}_{\mathbf{K}\mathrm{\mathrm{\Sigma}}}(\emptyset, \Diamond\Box(\xi_1\land\dots\land\xi_n)\rightarrow\neg\Box\Diamond(\delta_1\land\dots\land\delta_m))$ holds, so we have $\neg\Box\Diamond(\delta_1\land\dots\land\delta_m)\in E_0$. However, by ${.2}\in\mathrm{\Sigma}$, that is, ${.2}\in E_0$ and $\Diamond\Box(\delta_1\land\dots\land\delta_m)\in E_0$, $\Box\Diamond(\delta_1\land\dots\land\delta_m)\in E_0$, it is a contradiction that $E_0$ is $\mathbf{K}\mathrm{\mathrm{\Sigma}}$-consistent. Thus $\{\xi\mid\Box\xi\in E_1\}\cup\{\delta\mid\Box\delta\in E_2\}$ is $\mathbf{K}\mathrm{\mathrm{\Sigma}}$-consistent. By Lemma~\ref{vvv}, there exists $e_3\in W$ such that $\{\xi\mid\Box\xi\in E_1\}\cup\{\delta\mid\Box\delta\in E_2\}\cup\mathrm{\Gamma}\subseteq E_3$. So $e_1Re_3$ and $e_2Re_3$ hold.
\end{proof}

\begin{lem}[Truth Lemma, $\mathrm{ACA}_0$]\label{truthlem}
The truth valuation $V^{\mathbf{K}\mathrm{\mathrm{\Sigma}}}$ satisfies the following conditions: 
\begin{itemize}
    \item $V^{\mathbf{K}\mathrm{\mathrm{\Sigma}}}(w, \bot)=0$ for any $w\in W^{\mathbf{K}\mathrm{\mathrm{\Sigma}}}$, 
    \item $V^{\mathbf{K}\mathrm{\mathrm{\Sigma}}}(w, \varphi\rightarrow\psi)=1-V^{\mathbf{K}\mathrm{\mathrm{\Sigma}}}(w, \varphi)(1-V^{\mathbf{K}\mathrm{\mathrm{\Sigma}}}(w, \psi))$ for any $w\in W^{\mathbf{K}\mathrm{\mathrm{\Sigma}}}$ and any $\varphi, \psi\in\mathrm{Fml}$, 
    \item $V^{\mathbf{K}\mathrm{\mathrm{\Sigma}}}(w, \Box\varphi)=1\iff\forall v\in W^{\mathbf{K}\mathrm{\mathrm{\Sigma}}}(wR^{\mathbf{K}\mathrm{\mathrm{\Sigma}}}v\rightarrow V^{\mathbf{K}\mathrm{\mathrm{\Sigma}}}(v, \varphi)=1)$ for any $w\in W^{\mathbf{K}\mathrm{\mathrm{\Sigma}}}$ and any $\varphi\in\mathrm{Fml}$.
\end{itemize}
\end{lem}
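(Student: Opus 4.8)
The plan is to verify the three valuation conditions directly for the function $V^{\mathbf{K}\mathrm{\Sigma}}$, reading each world $w=e$ as the maximally $\mathbf{K}\mathrm{\Sigma}$-consistent set $E=\mathrm{\Delta}^{0,X}_2[e]$ that it indexes. A key structural point is that \emph{no} induction on formula complexity is required here, in contrast with the truth lemmas for the weak models: there the valuation was specified abstractly and had to be matched against provability by induction, whereas here $V^{\mathbf{K}\mathrm{\Sigma}}(e,\chi)=1$ is \emph{defined} as $\chi\in E$, so the three clauses become pointwise facts about membership in maximally consistent sets. The Boolean clauses then fall out of Proposition~\ref{esp}, and all the real content sits in the modal clause, which is the classical Truth Lemma argument powered by the low successor worlds of Lemma~\ref{vvv}.

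First I would dispatch the $\bot$ clause: since each $E\in W^{\mathbf{K}\mathrm{\Sigma}}$ is maximally $\mathbf{K}\mathrm{\Sigma}$-consistent we have $\bot\notin E$, so $V^{\mathbf{K}\mathrm{\Sigma}}(w,\bot)=0$. For the implication clause I would invoke Proposition~\ref{esp}(2), namely $\varphi\to\psi\in E\iff(\varphi\notin E\lor\psi\in E)$. Translating through the definition $V^{\mathbf{K}\mathrm{\Sigma}}(w,\chi)=1\iff\chi\in E$ gives $V^{\mathbf{K}\mathrm{\Sigma}}(w,\varphi\to\psi)=1\iff(V^{\mathbf{K}\mathrm{\Sigma}}(w,\varphi)=0\lor V^{\mathbf{K}\mathrm{\Sigma}}(w,\psi)=1)$, and a four-case check on the truth values confirms that this is exactly the arithmetized identity $V^{\mathbf{K}\mathrm{\Sigma}}(w,\varphi\to\psi)=1-V^{\mathbf{K}\mathrm{\Sigma}}(w,\varphi)(1-V^{\mathbf{K}\mathrm{\Sigma}}(w,\psi))$.

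For the modal clause I would argue the two directions of $V^{\mathbf{K}\mathrm{\Sigma}}(w,\Box\varphi)=1\iff\forall v\,(wR^{\mathbf{K}\mathrm{\Sigma}}v\to V^{\mathbf{K}\mathrm{\Sigma}}(v,\varphi)=1)$ separately. The forward direction is immediate from the definition of $R^{\mathbf{K}\mathrm{\Sigma}}$: if $\Box\varphi\in E$ and $wR^{\mathbf{K}\mathrm{\Sigma}}v$, with $v$ indexing $\tilde E$, then $\varphi\in\tilde E$ by the defining clause of accessibility, so $V^{\mathbf{K}\mathrm{\Sigma}}(v,\varphi)=1$. For the converse I would argue contrapositively. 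Assuming $\Box\varphi\notin E$, Lemma~\ref{vvv} produces a maximally $\mathbf{K}\mathrm{\Sigma}$-consistent set $\tilde E$, low relative to $\mathrm{\Gamma}$ and extending $\mathrm{\Gamma}$, with $\{\neg\varphi\}\cup\{\xi\mid\Box\xi\in E\}\subseteq\tilde E$. The inclusion $\{\xi\mid\Box\xi\in E\}\subseteq\tilde E$ is precisely the condition $wR^{\mathbf{K}\mathrm{\Sigma}}v$, and $\neg\varphi\in\tilde E$ gives $\varphi\notin\tilde E$ by Proposition~\ref{esp}(1), so $V^{\mathbf{K}\mathrm{\Sigma}}(v,\varphi)=0$; thus $v$ witnesses the failure of the right-hand side, completing the contrapositive.

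The main obstacle is entirely absorbed into Lemma~\ref{vvv}: manufacturing the accessible world $\tilde E$ rests on the low Lindenbaum extension, and it is this single step that consumes the strength of $\mathrm{ACA}_0$ through the low basis theorem. The only residual care is the bookkeeping that the $\tilde E$ so obtained genuinely belongs to $W^{\mathbf{K}\mathrm{\Sigma}}$ --- that is, that its index lies in $L(\mathrm{\Gamma})$ and indexes a maximally $\mathbf{K}\mathrm{\Sigma}$-consistent set containing $\mathrm{\Gamma}$ --- which is exactly what the statement of Lemma~\ref{vvv} already guarantees.
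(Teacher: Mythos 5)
Your proposal is correct and takes essentially the same approach as the paper: the Boolean clauses are dispatched via Proposition~\ref{esp}, and the modal clause via the definition of $R^{\mathbf{K}\mathrm{\Sigma}}$ in the forward direction and Lemma~\ref{vvv} (whose conclusion indeed guarantees the successor world lies in $W^{\mathbf{K}\mathrm{\Sigma}}$) in the converse. The only divergence is presentational: the paper wraps the case analysis in an induction on the length of $\varphi$ and routes the accessibility check through Proposition~\ref{zimei}, but, as you correctly observe, the inductive hypothesis is never actually used since $V^{\mathbf{K}\mathrm{\Sigma}}(e,\chi)=1$ is defined as $\chi\in E$, so your non-inductive framing (and direct appeal to the definition of $R^{\mathbf{K}\mathrm{\Sigma}}$) is a sound and slightly cleaner rendering of the same argument.
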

\begin{proof}
We show this lemma by induction on the length of $\varphi$ using $\mathrm{ACA}_0$. Fix $e\in W$. If $\varphi\equiv\bot$, it is clear.

Assume that $\varphi\equiv\psi\rightarrow\theta$. Then one of (1) $V^{\mathbf{K}\mathrm{\mathrm{\Sigma}}}(e, \psi)=0\iff\psi\not\in E$ or (2) $V^{\mathbf{K}\mathrm{\mathrm{\Sigma}}}(e, \theta)=1\iff\theta\in E$ or (3) $(V^{\mathbf{K}\mathrm{\mathrm{\Sigma}}}(e, \psi)=1\land V^{\mathbf{K}\mathrm{\mathrm{\Sigma}}}(e, \theta)=0)\iff(\psi\in E\land\theta\not\in E)$ is satisfiable. If either (1) or (2) holds, then by Proposition~\ref{esp}, $\psi\rightarrow\theta\in E\iff V^{\mathbf{K}\mathrm{\mathrm{\Sigma}}}(e, \psi\rightarrow\theta)=1$. If (3) holds, then by Proposition~\ref{esp}, $\psi\rightarrow\theta\not\in E\Leftrightarrow V^{\mathbf{K}\mathrm{\mathrm{\Sigma}}}(e, \psi\rightarrow\theta)=0$. Thus $V^{\mathbf{K}\mathrm{\mathrm{\Sigma}}}(w, \psi\rightarrow\theta)=1-V^{\mathbf{K}\mathrm{\mathrm{\Sigma}}}(w, \psi)(1-V^{\mathbf{K}\mathrm{\mathrm{\Sigma}}}(w, \theta))$ holds.

Assume that $\varphi\equiv\Box\psi$. We have two cases. First, we assume that $V^{\mathbf{K}\mathrm{\mathrm{\Sigma}}}(e, \Box\psi)=0$. Then $\Box\psi\not\in E$. Then by Lemma~\ref{vvv}, there exists $\tilde{e}\in W$ such that $\{\neg\psi\}\cup\{\xi|\Box\xi\in E\}\cup\mathrm{\Gamma}\subseteq\tilde{E}$. Thus by Proposition~\ref{zimei}, $eR^{\mathbf{K}\mathrm{\mathrm{\Sigma}}}\tilde{e}$ and $\neg\psi\in\tilde{E}$, so $\psi\not\in\tilde{E}$ holds. Thus $V^{\mathbf{K}\mathrm{\mathrm{\Sigma}}}(\tilde{e}, \psi)=0$. The other case is when $V^{\mathbf{K}\mathrm{\mathrm{\Sigma}}}(e, \Box\psi)=1$, that is, $\Box\psi\in E$ holds. Fix $\tilde{e}\in W$ with $eR^{\mathbf{K}\mathrm{\mathrm{\Sigma}}}\tilde{e}$. Then $\psi\in\tilde{E}$, that is, $V^{\mathbf{K}\mathrm{\mathrm{\Sigma}}}(\tilde{e}, \psi)=1$. 
\end{proof}

From the above, we obtain the strong completeness theorem using a canonical model.

\begin{thm}[Strong Completeness Theorem using canonical models, $\mathrm{ACA}_0$]\label{SCT}
We assume that $\mathrm{\Sigma}\subseteq\{T, B, 4, 5, D, .2\}$. Let $\mathrm{\Gamma}$ be a set of fomulas. If $\mathrm{\Gamma}$ is $\mathbf{K}\mathrm{\mathrm{\Sigma}}$-consistent, then there exist a frame $F=(W, R)$ which is appropriate to $\mathrm{\Sigma}$ and a valuation $V$ such that $M=(F, V)$ is a model of $\mathrm{\Gamma}$ and the following holds:
\begin{equation*}
\forall\varphi\in\mathrm{Fml}(\mathrm{Pbl}_{\mathbf{K}\mathrm{\mathrm{\Sigma}}}(\mathrm{\Gamma}, \varphi)\leftrightarrow\forall e\in W(V(e, \varphi)=1)).
\end{equation*}
\end{thm}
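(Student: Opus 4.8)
The plan is to take the canonical model $M^{\mathbf{K}\mathrm{\mathrm{\Sigma}}}=(W^{\mathbf{K}\mathrm{\mathrm{\Sigma}}}, R^{\mathbf{K}\mathrm{\mathrm{\Sigma}}}, V^{\mathbf{K}\mathrm{\mathrm{\Sigma}}})$ of Definition~\ref{cano} as the witness, setting $F=(W^{\mathbf{K}\mathrm{\mathrm{\Sigma}}}, R^{\mathbf{K}\mathrm{\mathrm{\Sigma}}})$ and $V=V^{\mathbf{K}\mathrm{\mathrm{\Sigma}}}$. Almost all of the content has already been established in the preceding lemmas: by Lemma~\ref{dependson} the frame $F$ is appropriate to $\mathrm{\Sigma}$, and by the Truth Lemma (Lemma~\ref{truthlem}) the function $V$ obeys the recursive clauses in the definition of a valuation, so $M=(F,V)$ is a genuine Kripke model. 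It therefore remains to verify that $W^{\mathbf{K}\mathrm{\mathrm{\Sigma}}}$ is non-empty, that $M$ is a model of $\mathrm{\Gamma}$, and the two directions of the displayed biconditional.

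For non-emptiness and for the claim $M\Vdash\psi$ for every $\psi\in\mathrm{\Gamma}$, I would apply Low Lindenbaum's Lemma (Lemma~\ref{lindenbaum}) to the consistent set $\mathrm{\Gamma}$, obtaining a maximally $\mathbf{K}\mathrm{\Sigma}$-consistent set low relative to $\mathrm{\Gamma}$ and containing $\mathrm{\Gamma}$; its index is an element of $W^{\mathbf{K}\mathrm{\mathrm{\Sigma}}}$, so $W^{\mathbf{K}\mathrm{\mathrm{\Sigma}}}\neq\emptyset$. Every $e\in W^{\mathbf{K}\mathrm{\mathrm{\Sigma}}}$ satisfies $\mathrm{\Gamma}\subseteq E$ by construction, so for each $\psi\in\mathrm{\Gamma}$ we have $\psi\in E$ and hence $V(e,\psi)=1$; since $e$ was arbitrary, $M\Vdash\psi$, which is exactly the assertion that $M$ is a model of $\mathrm{\Gamma}$.

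For the biconditional I would treat the two directions separately. Suppose first $\mathrm{Pbl}_{\mathbf{K}\mathrm{\mathrm{\Sigma}}}(\mathrm{\Gamma}, \varphi)$ and fix $e\in W^{\mathbf{K}\mathrm{\mathrm{\Sigma}}}$. As $\mathrm{\Gamma}\subseteq E$, the finitely many premises witnessing provability lie in $E$, so $\mathrm{Pbl}_{\mathbf{K}\mathrm{\mathrm{\Sigma}}}(E, \varphi)$ holds; because $E$ is closed under deduction, $\varphi\in E$, i.e. $V(e,\varphi)=1$. Conversely I would argue the contrapositive: if $\neg\mathrm{Pbl}_{\mathbf{K}\mathrm{\mathrm{\Sigma}}}(\mathrm{\Gamma}, \varphi)$, then $\mathrm{\Gamma}\cup\{\neg\varphi\}$ is $\mathbf{K}\mathrm{\Sigma}$-consistent, for otherwise $\mathrm{Pbl}_{\mathbf{K}\mathrm{\mathrm{\Sigma}}}(\mathrm{\Gamma}\cup\{\neg\varphi\}, \bot)$ would yield $\mathrm{Pbl}_{\mathbf{K}\mathrm{\mathrm{\Sigma}}}(\mathrm{\Gamma}, \varphi)$ by the propositional reasoning absorbed into $\mathrm{Pbl}$. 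By Low Lindenbaum's Lemma this set extends to a maximally $\mathbf{K}\mathrm{\Sigma}$-consistent set low relative to $\mathrm{\Gamma}$; its index is an $e\in W^{\mathbf{K}\mathrm{\mathrm{\Sigma}}}$ with $\neg\varphi\in E$, and consistency forces $\varphi\notin E$, so $V(e,\varphi)=0$ and $\forall e\in W(V(e,\varphi)=1)$ fails.

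The point to get right is that the entire argument fits in $\mathrm{ACA}_0$: the genuine difficulty has already been absorbed into the low-completion machinery, namely producing the worlds as low sets via the low basis theorem (Lemma~\ref{lindenbaum}) and verifying the $\Box$-clause of the Truth Lemma (Lemma~\ref{truthlem}), which in turn rests on the existence lemma (Lemma~\ref{vvv}). Granting those, the final theorem is a routine assembly; the only step demanding attention is the backward direction, where one must apply Low Lindenbaum to $\mathrm{\Gamma}\cup\{\neg\varphi\}$ and confirm that the resulting maximally $\mathbf{K}\mathrm{\Sigma}$-consistent low set genuinely qualifies as a world of $W^{\mathbf{K}\mathrm{\mathrm{\Sigma}}}$, i.e. that it contains $\mathrm{\Gamma}$ and is low relative to $\mathrm{\Gamma}$, precisely as demanded by Definition~\ref{cano}.
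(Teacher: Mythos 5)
Your proposal matches the paper's proof essentially step for step: both take the canonical model of Definition~\ref{cano} as the witness, invoke Lemma~\ref{dependson} and the Truth Lemma (Lemma~\ref{truthlem}) to see that $M$ is a Kripke model on an appropriate frame satisfying $\mathrm{\Gamma}$, and handle the unprovability direction by applying Low Lindenbaum's Lemma (Lemma~\ref{lindenbaum}) to $\mathrm{\Gamma}\cup\{\neg\varphi\}$ to produce a world refuting $\varphi$. The only divergence is the provability direction, where the paper cites the Soundness Theorem while you argue syntactically via $\mathrm{\Gamma}\subseteq E$ and deductive closure of the maximally $\mathbf{K}\mathrm{\Sigma}$-consistent set $E$; both are correct, and you are in fact more explicit than the paper on the one delicate point, namely that the Lindenbaum extension of $\mathrm{\Gamma}\cup\{\neg\varphi\}$ genuinely qualifies as an element of $W^{\mathbf{K}\mathrm{\Sigma}}$ (it contains $\mathrm{\Gamma}$ and is low relative to $\mathrm{\Gamma}$, since adjoining the single formula $\neg\varphi$ does not change the Turing degree).
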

\begin{proof}
Let $W=W^{\mathbf{K}\mathrm{\mathrm{\Sigma}}}$, $R=R^{\mathbf{K}\mathrm{\mathrm{\Sigma}}}$, and $V=V^{\mathbf{K}\mathrm{\mathrm{\Sigma}}}$ by Definition~\ref{cano}. This is the desired Kripke model by Truth Lemma. Moreover, it is clear that $\forall\varphi\in\mathrm{\Gamma}\forall e\in W(V(e, \varphi)=1)$. Let $\varphi\in\mathrm{Fml}$. We have the two cases. If $\mathrm{Pbl}_{\mathbf{K}\mathrm{\mathrm{\Sigma}}}(\mathrm{\Gamma}, \varphi)$, then $\forall e\in W(V(e, \varphi)=1)$ holds by the soundness theorem. If $\neg\mathrm{Pbl}_{\mathbf{K}\mathrm{\mathrm{\Sigma}}}(\mathrm{\Gamma}, \varphi)$, then by Low Lindenbaum's Lemma, there exists $e\in W$ such that $\neg\varphi\in E$, that is, $\varphi\not\in E$. Then $V(e, \varphi)=0$.
\end{proof}

\begin{thm}
The following statements are equivalent over $\mathrm{RCA}_0$:
\begin{enumerate}
    \item $\mathrm{ACA}_0$,
    \item Strong Completeness Theorem using canonical models.
\end{enumerate}
\end{thm}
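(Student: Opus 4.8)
The plan is to prove the two implications separately. The implication $(1)\Rightarrow(2)$ is exactly Theorem~\ref{SCT}, whose proof is carried out in $\mathrm{ACA}_0$, so nothing further is needed there. For the converse $(2)\Rightarrow(1)$ I would argue in $\mathrm{RCA}_0$ and reduce $\mathrm{ACA}_0$ to $\Sigma^0_1$-comprehension, using the standard fact that $\mathrm{ACA}_0$ is equivalent over $\mathrm{RCA}_0$ to the existence of the range of an arbitrary one-to-one function (equivalently, to $\Sigma^0_1$-comprehension with set parameters). So fix a $\Sigma^0_1$ predicate $n\in P\leftrightarrow\exists m\,\theta(n,m)$ with $\theta$ recursive in the parameters, and aim to show that $P$ exists as a set.

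The key step is to code $P$ by a \emph{decidable} consistent theory whose provability relation is $\Sigma^0_1$-complete for $P$. Fix two disjoint families of propositional constants $\{c_{n,m}\}$ and $\{d_n\}$ from $\mathrm{C}$, and set
\[
\Gamma=\{\,c_{n,m}\mid\theta(n,m)\,\}\cup\{\,c_{n,m}\rightarrow d_n\mid\theta(n,m)\,\}.
\]
Since $\theta$ is recursive, membership in $\Gamma$ is decidable, so $\Gamma$ exists as a set in $\mathrm{RCA}_0$. As $\Gamma$ is built from $\Box$-free formulas, $\mathrm{Pbl}_{\mathbf{K}\mathrm{\Sigma}}(\Gamma,\cdot)$ restricted to $\Box$-free conclusions coincides with classical propositional derivability; this is provable in $\mathrm{RCA}_0$ via the soundness theorem (Theorem~\ref{sound}) applied to the reflexive one-point frame $(\{0\},\{(0,0)\})$, which is appropriate to every $\mathrm{\Sigma}\subseteq\{T,B,4,5,D,.2\}$ and on which a full valuation is definable by recursion on formula length (with $\Box$ acting as the identity). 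From this one reads off, first, that $\Gamma$ is $\mathbf{K}\mathrm{\Sigma}$-consistent (assign every $d_n$ and exactly the $c_{n,m}$ with $\theta(n,m)$ the value $1$), and second, that for every $n$
\[
\mathrm{Pbl}_{\mathbf{K}\mathrm{\Sigma}}(\Gamma,d_n)\iff\exists m\,\theta(n,m)\iff n\in P,
\]
since the fresh constant $d_n$ is derivable only by detaching some $c_{n,m}\rightarrow d_n$ against $c_{n,m}$, and both of these lie in $\Gamma$ precisely when $\theta(n,m)$ holds.

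With $\Gamma$ in hand I would apply statement $(2)$ to obtain a frame $F=(W,R)$ appropriate to $\mathrm{\Sigma}$ and a valuation $V$ with $\mathrm{Pbl}_{\mathbf{K}\mathrm{\Sigma}}(\Gamma,\varphi)\leftrightarrow\forall e\in W\,(V(e,\varphi)=1)$ for all $\varphi$. Combining this with the previous display gives, for every $n$,
\[
n\in P\iff\forall e\in W\,(V(e,d_n)=1),
\]
so membership in $P$ is $\Pi^0_1$ in the set parameters $W,V$. But $n\in P$ is also $\Sigma^0_1$ (namely $\exists m\,\theta(n,m)$); hence both $P$ and its complement are $\Sigma^0_1$ in the available parameters, so $P$ is $\Delta^0_1$ and exists by $\Delta^0_1$-comprehension in $\mathrm{RCA}_0$. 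This establishes $\Sigma^0_1$-comprehension and therefore $\mathrm{ACA}_0$.

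The main obstacle is the tension in the reversal between two requirements on $\Gamma$: it must be a genuine set, hence decidable, in $\mathrm{RCA}_0$, yet its deductive closure must be a $\Sigma^0_1$-complete copy of $P$. Asserting the relevant atoms directly would make $\Gamma$ itself the $\Sigma^0_1$ object we are trying to build; the device of carrying the existential witness $m$ inside the constant $c_{n,m}$ and detaching it via $c_{n,m}\rightarrow d_n$ is what keeps $\Gamma$ decidable while letting finite proofs recover $\exists m\,\theta(n,m)$. The remaining care is purely in verifying, inside $\mathrm{RCA}_0$, that the $\Box$-free fragment of $\mathbf{K}\mathrm{\Sigma}$ is classical and that the one-point model genuinely exists; the final $\Sigma^0_1\wedge\Pi^0_1\Rightarrow\Delta^0_1$ collapse is then routine.
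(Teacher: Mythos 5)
Your proof is correct and follows essentially the same route as the paper: both reversals build a decidable, consistent theory $\Gamma$ whose deductive closure encodes the given $\Sigma^0_1$ predicate, use soundness on the one-point reflexive frame to verify consistency and non-provability, invoke statement (2) to re-express provability from $\Gamma$ as the $\Pi^0_1$ condition of truth at all worlds of the canonical model, and conclude by $\Delta^0_1$-comprehension. The only difference is the coding gadget keeping $\Gamma$ decidable: the paper pads disjunctions $\alpha_n\lor\dots\lor\alpha_n$ so that the witness search $\exists m\leq\langle\alpha_n\lor\dots\lor\alpha_n\rangle\,\theta(n,m)$ is bounded by the formula's G\"odel number, whereas you carry the witness $m$ inside a constant $c_{n,m}$ and detach the target $d_n$ via $c_{n,m}\rightarrow d_n$ --- an equally valid device.
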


\begin{proof}
$(1)$ implies $(2)$ by Theorem~\ref{SCT}. We show that $(2)$ implies $(1)$. It is enough to show $\mathrm{\Sigma}^0_1$-$\mathrm{CA}$. Let $\varphi(n)\equiv\exists m\theta(n, m)$ be $\mathrm{\Sigma}^0_1$-fomula ($\theta(n, m)$ is a $\mathrm{\Sigma}^0_0$-formula). Let $\alpha_0, \alpha_1, \dots$ be an enumeration of $\mathrm{C}$. Then we define $\mathrm{\Gamma}=\{\alpha_n\lor\dots\lor\alpha_n\mid\exists m\leq\langle{\alpha_n\lor\dots\lor\alpha_n\rangle}\theta(n, m)\}$. Any finite subset of $\mathrm{\Gamma}$ has a model, so by the Soundness Theorem, any finite subset of $\mathrm{\Gamma}$ is $\mathbf{K}\mathrm{\mathrm{\Sigma}}$-consistent. Thus $\mathrm{\Gamma}$ is $\mathbf{K}\mathrm{\mathrm{\Sigma}}$-consistent. Then by the Strong Completeness Theorem using canonical models and the Soundness Theorem, the following holds.
\begin{equation*}
\varphi(n)\leftrightarrow\alpha_n\lor\dots\lor\alpha_n\in\mathrm{\Gamma}\leftrightarrow\mathrm{Pbl}_{\mathbf{K}\mathrm{\mathrm{\Sigma}}}(\mathrm{\Gamma}, \alpha_n)\leftrightarrow\forall w\in W(V(w, \alpha_n)=1)
\end{equation*}

Thus by $\mathrm{\Delta}^0_1$-$\mathrm{CA}$, there exists a set $X$ such that $\forall n(n\in X\leftrightarrow\varphi(n))$ holds. Thus $\mathrm{\Sigma}^0_1$-$\mathrm{CA}$ holds.
\end{proof}

\section{Strong Completeness Theorem in second-order arithmetic}
We showed Theorem~\ref{SCT} by constructing a canonical model for a given $\mathbf{K}\mathrm{\mathrm{\Sigma}}$-consistent set $\mathrm{\Gamma}$. On the other hand, we consider the following statement: for a given $\mathbf{K}\mathrm{\mathrm{\Sigma}}$-consistent set $\mathrm{\Gamma}\cup\{\psi\}$, there exists a model for $\mathrm{\Gamma}\cup\{\psi\}$. This is a weaker formulation of the strong completeness theorem. First of all, we give a natural first-order logic language $\mathcal{L}_{\mathcal{FO}}$ for the Kripke model of modal logic. Using the standard translation, we aim to obtain the Kripke model from the properties of first-order logic.

\begin{dfn}[$\mathrm{RCA}_0$]
Let $\mathcal{L}_{\mathcal{FO}}$ be the first-order language (with equality) which  has unary predicates $P_0, P_1, \dots$ corresponding to the proposition letters $p_0, p_1, \dots$, and $2$-ary reation symbol $r$.
\end{dfn}

Then, as in the case of modal logic, we can show the existence of the set of $\mathcal{L}_{\mathcal{FO}}$-formulas and axioms of first-order logic in $\mathrm{RCA}_0$, i.e., we can show in $\mathrm{RCA}_0$ that there exist sets $\mathrm{Trm}_{\mathcal{FO}}$, $\mathrm{Fml}_{\mathcal{FO}}$, $\mathrm{Snt}_{\mathcal{FO}}$, and $\mathrm{Axm}_\mathcal{FO}$ consisting of all $\mathcal{L}_{\mathcal{FO}}$-terms, $\mathcal{L}_{\mathcal{FO}}$-fomulas, $\mathcal{L}_{\mathcal{FO}}$-sentences, and axioms of first-order logic in $\mathcal{L}_{\mathcal{FO}}$ respectively. Moreover, as in the case of modal logic, we can define the provability predicate of first-order logic in $\mathrm{RCA}_0$. We assume that the only logical rule is modus ponens (See Simpson \cite{MR2517689}).

\begin{dfn}[$\mathrm{RCA}_0$]
Let $\mathrm{\Sigma}\subseteq\{T, B, 4, D, 5, .2\}$. We define the function $A: \mathrm{\Sigma}\rightarrow\mathrm{Snt}_{\mathcal{FO}}$ by the following:
\begin{equation*}
A(\psi)=
  \begin{dcases*}
  \forall xr(x, x) & if $\psi\equiv T$,\\
  \forall x\forall y(r(x, y)\rightarrow r(y,x)) & if $\psi\equiv B$,\\
  \forall x\forall y\forall z(r(x, y)\land r(y, z)\rightarrow r(x, z)) & if $\psi\equiv 4$,\\
  \forall x\exists yr(x, y) & if $\psi\equiv D$,\\
  \forall x\forall y\forall z(r(x, y)\land r(x, z)\rightarrow r(y, z)) & if $\psi\equiv 5$,\\
  \forall x\forall y\forall z(r(x, y)\land r(x, z)\rightarrow\exists w(r(y, w)\land r(z, w))) & if $\psi\equiv .2$.\\
  \end{dcases*}
\end{equation*}

Moreover, we define $\mathrm{\Sigma}_{\mathcal{FO}}=\{A(\psi)\mid\psi\in\mathrm{\Sigma}\}$.
\end{dfn}

\begin{dfn}[$\mathrm{RCA}_0$]
Let $X\subset\mathrm{Fml}_\mathcal{FO}$ and $\mathrm{\Sigma}\subseteq\{T, B, 4, D, 5, .2\}$. We will define the following fomulas:
\begin{equation*}
\begin{split}
\mathrm{Prf}_{\mathcal{FO}(\mathrm{\Sigma})}(X, p)\equiv p&\in\mathrm{Seq}\land\forall k(k<lh(p)\rightarrow p(k)\in\mathrm{Fml}_\mathcal{FO})\\&\land\forall k\Bigl(k<lh(p)\rightarrow\Bigl(p(k)\in X\lor p(k)\in\mathrm{Axm}_\mathcal{FO}\lor p(k)\in\mathrm{\Sigma}_{\mathcal{FO}}\\&\lor(\exists i<k\exists j<k)(p(i)=p(j)\rightarrow p(k))\Bigr)\Bigr),
\end{split}
\end{equation*}
\begin{equation*}
\begin{split}
\mathrm{Pbl}_{\mathcal{FO}(\mathrm{\Sigma})}(X, \varphi)\equiv&\exists\psi_1,\dots,\psi_n\in X\exists p\\&\bigl(\mathrm{Prf}_{\mathcal{FO}(\mathrm{\Sigma})}(X, p)\land(\exists i<lh(p))(p(i)=(\psi_1\land\dots\land\psi_n\rightarrow\varphi))\bigr).
\end{split}
\end{equation*}
\end{dfn}

Furthermore, as in the case of modal logic, we can use $\mathrm{RCA}_0$ to describe the consistency of the system and the satisfiability of the model in first-order logic.

\begin{dfn}[$\mathrm{RCA}_0$]
Let $M$ be a set. Let $T_M$ and $S_M$ be respectively the sets of closed terms and sentences of the expanded language $\mathcal{L}^{M}_{\mathcal{FO}}=\mathcal{L}_{\mathcal{FO}}\cup\{\overline{m}\mid m\in M\}$ with new constant symbols $\overline{m}$ for each element $m\in M$. 

$\mathcal{M}=(M, v)$ is a standard model on $\mathcal{L}_{\mathcal{FO}}$ if the function $v: T_{M}\cup S_{M}\rightarrow M\cup\{0, 1\}$ satisfies the following conditions:
\begin{itemize}
    \item $\forall t\in T_M(v(t)\in M)$,
    \item $\forall\sigma\in S_M(v(\sigma)\in\{0, 1\})$,
    \item $\forall t_1, \forall t^{\prime}_1, \dots \forall t_n, \forall t^{\prime}_n\in T_M\\\bigl(\forall i\leq n(v(t_i)=v(t^{\prime}_i))\rightarrow v(R(t_1,\dots,t_n))=v(R(t^{\prime}_1,\dots,t^{\prime}_n))\bigr)$, where $R$ is a relation symbol,
    \item $v(\neg\sigma)=1-v(\sigma)$,
    \item $v(\sigma_0\land\sigma_1)=v(\sigma_0)\cdot v(\sigma_1)$,
    \item $v(\forall x\varphi(x))=1\iff\forall a\in M(v(\varphi(\overline{a}))=1)$,
    \item $v(\exists x\varphi(x))=1\iff\exists a\in M(v(\varphi(\overline{a}))=1)$.
\end{itemize}
\end{dfn}

Let $X\subseteq\mathrm{Snt}_\mathcal{FO}$. Then a standard model $\mathcal{M}$ is  a model of $X$ on $\mathcal{L}_{\mathcal{FO}}$ if $\forall\varphi\in X(v(\varphi)=1)$ holds.

\begin{dfn}[$\mathrm{RCA}_0$]
Let $\varphi\in\mathrm{Snt}_{\mathcal{FO}}$ and $X\subseteq\mathrm{Snt}_{\mathcal{FO}}$. Let $\mathcal{M}=(M, v)$ be a standard model on $\mathcal{L}_{\mathcal{FO}}$. We write $\mathcal{M}\vDash\varphi$ if $M$ satisfies $\varphi$, i.e., $v(\varphi)=1$, and we write $X\vDash\varphi$ if every model of $X$ on $\mathcal{L}_{\mathcal{FO}}$ satisfies $\varphi$.
\end{dfn}

\begin{thm}[Soundness Theorem for first-order logic, $\mathrm{RCA}_0$ \cite{MR2517689}]
Let \\$\mathrm{\Sigma}\subseteq\{T, B, 4, D, 5, .2\}$ and $\varphi\in\mathrm{Snt}_{\mathcal{FO}}$. If $\mathrm{Pbl}_{\mathcal{FO}(\mathrm{\Sigma})}(\emptyset, \varphi)$ holds, then $\vDash\varphi$ holds.
\end{thm}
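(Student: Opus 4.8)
The plan is to run the standard soundness argument for first-order logic, as in Simpson \cite{MR2517689}, while accounting for the extra non-logical axioms in $\mathrm{\Sigma}_{\mathcal{FO}}$ that are permitted as lines of a proof. Fix a proof $p$ witnessing $\mathrm{Pbl}_{\mathcal{FO}(\mathrm{\Sigma})}(\emptyset, \varphi)$, so $\mathrm{Prf}_{\mathcal{FO}(\mathrm{\Sigma})}(\emptyset, p)$ holds and $\varphi$ occurs as some line of $p$. Fix an arbitrary standard model $\mathcal{M}=(M, v)$ satisfying $\mathrm{\Sigma}_{\mathcal{FO}}$ (these sentences occur among the permitted axiom lines of $\mathcal{FO}(\mathrm{\Sigma})$, so they delimit the relevant class of models). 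Since $\varphi$ is a sentence, it suffices to show $\mathcal{M}\vDash\varphi$, and as $\mathcal{M}$ was arbitrary this yields $\vDash\varphi$.

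The core step is an induction on $k<lh(p)$ showing that $\mathcal{M}$ satisfies every line $p(k)$. Because intermediate lines may have free variables while $v$ is defined only on the sentences in $S_M$, I would read ``$\mathcal{M}$ satisfies $p(k)$'' as ``$v(\psi)=1$ for every sentence $\psi$ obtained from $p(k)$ by substituting constants $\overline{a}$ $(a\in M)$ for its free variables,'' i.e.\ that the universal closure of $p(k)$ holds. The required substitution is primitive recursive and hence available in $\mathrm{RCA}_0$. With $\mathcal{M}$ (equivalently $v$ and $M$) as a set parameter, this predicate is $\mathrm{\Pi}^0_1$ in the finitely many free variables of $p(k)$, so the induction formula $(\forall j\le k)(\mathcal{M}\text{ satisfies }p(j))$ is $\mathrm{\Pi}^0_1$ and $\mathrm{I}\mathrm{\Sigma}^0_1$ (equivalently $\mathrm{I}\mathrm{\Pi}^0_1$) carries it out inside $\mathrm{RCA}_0$.

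For the inductive step I would split on the justification of $p(k)$ recorded by $\mathrm{Prf}_{\mathcal{FO}(\mathrm{\Sigma})}$. There is no hypothesis case, since the left argument is $\emptyset$. If $p(k)\in\mathrm{Axm}_{\mathcal{FO}}$, its universal closure holds in every standard model; this is precisely the soundness of the logical axioms established in Simpson \cite{MR2517689}, which I would quote. If $p(k)\in\mathrm{\Sigma}_{\mathcal{FO}}$, then $\mathcal{M}\vDash p(k)$ directly from the assumption that $\mathcal{M}$ models $\mathrm{\Sigma}_{\mathcal{FO}}$; this is the only genuinely new case, and it is trivial. Finally, if $p(k)$ follows by modus ponens from earlier lines $p(i)$ and $p(j)=p(i)\rightarrow p(k)$, I would invoke the standard fact that satisfaction of universal closures is preserved under modus ponens (using that $M$ is nonempty to handle lines whose free variables differ), again as in Simpson \cite{MR2517689}.

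The main obstacle is bookkeeping rather than mathematics: one must state the induction hypothesis for universal closures so that the modus ponens step goes through for open lines, and verify that this predicate is genuinely arithmetical in the parameters $M$ and $v$, so that the induction fits into $\mathrm{I}\mathrm{\Sigma}^0_1$. Everything else reduces to the corresponding facts for pure first-order logic proved in Simpson \cite{MR2517689}.
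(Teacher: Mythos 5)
The paper offers no proof of this theorem at all: it is imported from Simpson \cite{MR2517689}, with the sentences in $\mathrm{\Sigma}_{\mathcal{FO}}$ simply admitted as extra axiom lines, so there is nothing to compare line by line. Your argument is exactly the standard soundness induction that the citation points to --- induction over the lines of the proof $p$ on the $\mathrm{\Pi}^0_1$ (in the parameters $M$, $v$) predicate ``the universal closure of $p(k)$ holds in $\mathcal{M}$,'' with the logical-axiom and modus ponens cases as in Simpson and the one genuinely new case, $p(k)\in\mathrm{\Sigma}_{\mathcal{FO}}$, trivial --- and it is correct as written. One remark: you quietly read $\vDash\varphi$ as validity over standard models \emph{of} $\mathrm{\Sigma}_{\mathcal{FO}}$ rather than over all standard models; this is not a defect but the only tenable reading, since for $\mathrm{\Sigma}\neq\emptyset$ the literal statement is false (e.g.\ $\forall x\, r(x,x)$ is $\mathcal{FO}(\{T\})$-provable but not valid), and it is precisely the reading under which the theorem is applied in Lemma~\ref{srightf} and Theorem~\ref{wfsct}.
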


\begin{thm}[G\"odel's Completeness Theorem, $\mathrm{WKL}_0$ \cite{MR2517689}]
Let \\$\varphi\in\mathrm{Snt}_{\mathcal{FO}}$ and $\mathrm{\Sigma}\subseteq\{T, B, 4, D, 5, .2\}$. If $X\subseteq\mathrm{Snt}_{\mathcal{FO}}$ is consistent, then $\mathrm{Pbl}_{\mathcal{FO}(\mathrm{\Sigma})}(X, \varphi)$ holds if and only if $X\vDash\varphi$ holds.
\end{thm}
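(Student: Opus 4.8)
The plan is to reduce the statement to the version of G\"odel's completeness theorem for first-order logic that is available in $\mathrm{WKL}_0$ (Simpson \cite{MR2517689}), by absorbing the finitely many extra axioms $\mathrm{\Sigma}_{\mathcal{FO}}$ into the ambient theory. Since $\mathrm{\Sigma}\subseteq\{T,B,4,D,5,.2\}$ is finite, $\mathrm{\Sigma}_{\mathcal{FO}}=\{A(\psi)\mid\psi\in\mathrm{\Sigma}\}$ is a finite set of $\mathcal{L}_{\mathcal{FO}}$-sentences, and inspecting the definition of $\mathrm{Prf}_{\mathcal{FO}(\mathrm{\Sigma})}$ shows that the system $\mathcal{FO}(\mathrm{\Sigma})$ is ordinary first-order logic with the members of $\mathrm{\Sigma}_{\mathcal{FO}}$ permitted as additional axioms. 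The guiding idea is therefore that $\mathrm{Pbl}_{\mathcal{FO}(\mathrm{\Sigma})}(X,\varphi)$ and the semantic relation on the right-hand side should both be rephrased in terms of $X\cup\mathrm{\Sigma}_{\mathcal{FO}}$, after which Simpson's theorem applies essentially verbatim.

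First I would verify, in $\mathrm{RCA}_0$, the purely syntactic equivalence $\mathrm{Pbl}_{\mathcal{FO}(\mathrm{\Sigma})}(X,\varphi)\leftrightarrow\mathrm{Pbl}_{\mathcal{FO}(\emptyset)}(X\cup\mathrm{\Sigma}_{\mathcal{FO}},\varphi)$. In the forward direction a given $\mathcal{FO}(\mathrm{\Sigma})$-proof already serves as a proof from $X\cup\mathrm{\Sigma}_{\mathcal{FO}}$, since each step that invoked a member of $\mathrm{\Sigma}_{\mathcal{FO}}$ as an axiom now counts as a line drawn from the premise set; the converse simply reads those premises back as axioms. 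Because $\mathrm{\Sigma}_{\mathcal{FO}}$ is finite and these transformations leave the proof sequence essentially unchanged, the equivalence is provable by the usual $\mathrm{I}\mathrm{\Sigma}^0_1$ induction on proofs. In particular the consistency hypothesis $\neg\mathrm{Pbl}_{\mathcal{FO}(\mathrm{\Sigma})}(X,\bot)$ becomes consistency of $X\cup\mathrm{\Sigma}_{\mathcal{FO}}$ in pure first-order logic, so Simpson's completeness theorem is applicable to the set $X\cup\mathrm{\Sigma}_{\mathcal{FO}}$.

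On the semantic side the relation $X\vDash\varphi$ is to be read with models tacitly required to satisfy the frame axioms $\mathrm{\Sigma}_{\mathcal{FO}}$, i.e.\ as $X\cup\mathrm{\Sigma}_{\mathcal{FO}}\vDash\varphi$; with this reading the soundness direction $\mathrm{Pbl}_{\mathcal{FO}(\mathrm{\Sigma})}(X,\varphi)\to X\vDash\varphi$ is immediate from the Soundness Theorem for first-order logic. For the completeness direction I would argue contrapositively: if $\neg\mathrm{Pbl}_{\mathcal{FO}(\mathrm{\Sigma})}(X,\varphi)$, then $X\cup\mathrm{\Sigma}_{\mathcal{FO}}\cup\{\neg\varphi\}$ is consistent (using that $\varphi$ is a sentence), so by the model-existence content of Simpson's theorem in $\mathrm{WKL}_0$ it has a standard model, which is a model of $X\cup\mathrm{\Sigma}_{\mathcal{FO}}$ refuting $\varphi$, whence $X\not\vDash\varphi$. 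The only delicate point is bookkeeping: tracking $\mathrm{\Sigma}_{\mathcal{FO}}$ consistently across the syntactic and semantic reformulations so that the two sides line up. No strength beyond $\mathrm{WKL}_0$ is needed, since all the genuine work, namely the Henkin-style construction carried out inside $\mathrm{WKL}_0$, is supplied by Simpson, and $\mathrm{\Sigma}_{\mathcal{FO}}$ contributes only finitely many axioms.
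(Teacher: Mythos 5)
Your proposal is correct and is essentially the paper's own approach: the paper offers no proof of this theorem, citing Simpson, and your reduction --- absorbing the finite axiom set $\mathrm{\Sigma}_{\mathcal{FO}}$ into the premise set so that Simpson's $\mathrm{WKL}_0$ completeness/model-existence theorem applies to $X\cup\mathrm{\Sigma}_{\mathcal{FO}}$ --- is exactly the content of that citation. Your remark that $X\vDash\varphi$ must be read relative to models satisfying the frame axioms is the correct (and necessary) reading; it matches how the paper actually invokes the theorem in proving its Strong Completeness Theorem (Theorem~\ref{wfsct}), where the model produced is observed to satisfy every $\mathrm{\Psi}\in\mathrm{\Sigma}_{\mathcal{FO}}$.
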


\begin{thm}[\cite{MR2517689}]\label{wgcp}
The following statements are equivalent over $\mathrm{RCA}_0$:
\begin{enumerate}
    \item $\mathrm{WKL}_0$,
    \item G\"odel's Completeness Theorem,
    \item The completeness theorem for propositional logic with countably many atoms.
\end{enumerate}
\end{thm}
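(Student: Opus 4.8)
The plan is to prove the cycle of implications $(1)\Rightarrow(2)\Rightarrow(3)\Rightarrow(1)$ over $\mathrm{RCA}_0$, following the treatment of G\"odel's completeness theorem in Simpson \cite{MR2517689}. The two forward implications form the ``soundness side'' and are essentially a formalized Henkin argument together with an embedding of propositional into first-order logic; the reversal $(3)\Rightarrow(1)$ is where the strength of $\mathrm{WKL}_0$ is actually extracted and is the conceptual heart of the argument. I note at the outset that the frame axioms $\mathrm{\Sigma}_{\mathcal{FO}}$ play no essential role here: since $\mathrm{\Sigma}\subseteq\{T,B,4,D,5,.2\}$ makes $\mathrm{\Sigma}_{\mathcal{FO}}$ finite, G\"odel completeness with these extra axioms reduces to plain completeness by absorbing $\mathrm{\Sigma}_{\mathcal{FO}}$ into $X$, so the equivalence is insensitive to $\mathrm{\Sigma}$.

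For $(1)\Rightarrow(2)$ I would run the Henkin construction inside $\mathrm{WKL}_0$. Given a consistent $X\subseteq\mathrm{Snt}_{\mathcal{FO}}$, first expand $\mathcal{L}_{\mathcal{FO}}$ by countably many fresh constants and adjoin, for each formula $\exists x\,\varphi(x)$, a witnessing axiom $\exists x\,\varphi(x)\rightarrow\varphi(c)$ with $c$ fresh; the resulting theory $X^{\ast}$ is a $\mathrm{\Delta}^0_1$ set and remains consistent by conservativity. Enumerating the sentences of the expanded language as $\tau_0,\tau_1,\dots$, I would form the binary tree $T\subseteq 2^{<\mathbb{N}}$ whose nodes $\sigma$ are the truth assignments to $\tau_0,\dots,\tau_{lh(\sigma)-1}$ admitting no proof of $\bot$ of code at most $lh(\sigma)$ from $X^{\ast}$ together with the literals chosen by $\sigma$; bounding the proof length here is exactly what makes $T$ a set. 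Consistency of $X^{\ast}$ together with $\mathrm{I}\mathrm{\Sigma}^0_1$ shows $T$ is infinite, so $\mathrm{WKL}_0$ yields a path, i.e.\ a complete consistent Henkin theory $Y$, and the term model of $Y$ (closed terms modulo $Y$-provable equality) is checked to be a standard model of $X$ by the usual truth lemma, proved by induction on formula complexity with the quantifier step using the Henkin property.

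For $(2)\Rightarrow(3)$ I would embed propositional logic into first-order logic by regarding the propositional atoms $p_0,p_1,\dots$ as atomic sentences (for instance as $0$-ary predicates, or as $\exists x\,P_i(x)$). The induced translation $\varphi\mapsto\varphi^{\ast}$ carries propositional proofs to first-order proofs and back, so a propositionally consistent $\mathrm{\Gamma}$ gives a first-order consistent $\mathrm{\Gamma}^{\ast}$ by a routine syntactic induction inside $\mathrm{RCA}_0$; G\"odel completeness then provides a standard model of $\mathrm{\Gamma}^{\ast}$, and reading off the truth values of the translated atoms yields a valuation satisfying $\mathrm{\Gamma}$.

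The reversal $(3)\Rightarrow(1)$ is the main point, and I would argue toward the tree form of $\mathrm{WKL}_0$. Let $T\subseteq 2^{<\mathbb{N}}$ be an infinite tree. Introduce atoms $a_0,a_1,\dots$ with intended meaning ``the path takes value $1$ at coordinate $i$'', and let $\mathrm{\Gamma}_T$ consist, for each $\sigma\notin T$, of the clause $\neg\bigwedge_{i<lh(\sigma)}\ell^{\sigma}_i$, where $\ell^{\sigma}_i$ is $a_i$ if $\sigma(i)=1$ and $\neg a_i$ otherwise. Since membership in $T$ is decidable, $\mathrm{\Gamma}_T$ exists (it is $\mathrm{\Delta}^0_1$ in $T$). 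Any finite subset of $\mathrm{\Gamma}_T$ involves nodes of length below some $N$, and following a node of $T$ of length $N$ (which exists as $T$ is infinite) gives a valuation satisfying it; by propositional soundness that finite subset is consistent, and since any refutation uses only finitely many premises, $\mathrm{\Gamma}_T$ is consistent. Completeness then yields a valuation $V$, and setting $X(i)=V(a_i)$ produces an $X\in 2^{\mathbb{N}}$ all of whose initial segments lie in $T$ (otherwise the corresponding clause of $\mathrm{\Gamma}_T$ would be falsified), i.e.\ a path through $T$. The hardest part of the whole argument is the formalization of $(1)\Rightarrow(2)$ in $\mathrm{WKL}_0$ --- making the Lindenbaum construction into a genuine binary tree via the proof-length bound, proving that tree infinite with only $\mathrm{I}\mathrm{\Sigma}^0_1$, and verifying the truth lemma for the term model --- whereas the reversal is short once one checks that $\mathrm{\Gamma}_T$ is a set and that finite satisfiability plus soundness yields consistency.
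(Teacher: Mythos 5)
Your proposal is correct, but note that the paper contains no proof of this theorem at all: it is imported verbatim from Simpson \cite{MR2517689}, so the only possible comparison is with the argument in that reference. Your implication $(1)\Rightarrow(2)$ is exactly the cited Henkin-style construction (witnessing constants, the Lindenbaum tree made $\mathrm{\Delta}^0_1$ by bounding proof codes, a path from weak K\"onig's lemma, and the term model), and your preliminary remark that the finite set $\mathrm{\Sigma}_{\mathcal{FO}}$ can be absorbed into $X$ is the right way to reconcile the paper's provability predicate $\mathrm{Pbl}_{\mathcal{FO}(\mathrm{\Sigma})}$ with plain completeness. Where you genuinely depart from the cited source is the reversal $(3)\Rightarrow(1)$: the standard reversal, which is also the template this paper itself follows in its Section 5 reversal to $\mathrm{ACA}_0$, extracts the strength via $\mathrm{\Sigma}^0_1$ separation, encoding existential witnesses into the syntax by the padding trick (axioms of the form $p_n\lor\dots\lor p_n$ whose length records the witness, so that the axiom set is $\mathrm{\Delta}^0_1$); you instead encode an infinite binary tree $T$ directly, taking as axioms the clauses $\neg\bigwedge_{i<lh(\sigma)}\ell^{\sigma}_i$ for $\sigma\notin T$ and reading a path off any satisfying valuation. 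Both reversals are sound; yours lands on the tree formulation of $\mathrm{WKL}_0$ in one step and needs no separation principle as intermediary, while the separation route has the advantage of being the argument that generalizes to the paper's other reversals. Two small glosses are worth making explicit. First, in $(2)\Rightarrow(3)$ the claim that the translation carries proofs ``back'' is the only nontrivial half: one needs the erasing map sending $P_i(t)$ to $p_i$, atoms involving $r$ or equality to a fixed tautology, and deleting quantifiers, under which every first-order axiom and inference rule becomes a propositional tautology or rule; with that spelled out the consistency transfer is indeed a $\mathrm{\Sigma}^0_1$-induction in $\mathrm{RCA}_0$. Second, since the paper's $\mathcal{L}_{\mathcal{FO}}$ has no $0$-ary predicates, only your $\exists x\,P_i(x)$ option for translating atoms is compatible with the fixed language.
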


The following is essentially a first-order reformulation of the modal satisfacation definition.

\begin{dfn}[$\mathrm{RCA}_0$ \cite{MR1837791}]
Let $x$ be a first-order variable. The \emph{standard translation} $ST_{x}$ taking modal formulas to $\mathcal{L}_{\mathcal{FO}}$-formulas is defined as follows:
\begin{itemize}
    \item $ST_{x}(p_{n})\equiv P_{n}(x)$,
    \item $ST_{x}(\bot)\equiv\bot$,
    \item $ST_{x}(\varphi\rightarrow\psi)\equiv ST_{x}(\varphi)\rightarrow ST_{x}(\psi)$,
    \item $ST_{x}(\Box\varphi)\equiv\forall y(r(x, y)\rightarrow ST_{y}(\varphi))$   ($y$ is fresh).
\end{itemize}
\end{dfn}

\begin{lem}[$\mathrm{RCA}_0$]\label{frightk}
Let $\varphi\in\mathrm{Fml}$. For all $\mathcal{M}=(M, v)$ which is a standard model on $\mathcal{L}_{\mathcal{FO}}$, there exists a Kripke model $M^{\star}$ such that
\begin{enumerate}
    \item for all $w\in M$, $M^{\star}, w\Vdash\varphi\iff\mathcal{M}\vDash ST_{w}(\varphi)$, and
    \item $M^{\star}\Vdash\varphi\iff\mathcal{M}\vDash\forall xST_{x}(\varphi)$.
\end{enumerate}
\end{lem}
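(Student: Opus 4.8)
The plan is to read the Kripke model $M^\star=(W,R,V)$ directly off the first-order data of $\mathcal{M}=(M,v)$, using the standard translation to transport $v$ to a modal valuation. Concretely I would set $W=M$ (nonempty since $M$ is), put $R=\{(a,b)\in M\times M\mid v(r(\overline{a},\overline{b}))=1\}$, and, crucially, define the valuation by composing $v$ with $ST$: for $w\in W$ and $\psi\in\mathrm{Fml}$ let $V(w,\psi)=v(ST_w(\psi))$, where $ST_a(\psi)$ denotes the sentence $ST_x(\psi)[\overline{a}/x]\in S_M$. The point of this definition is that it sidesteps exactly the obstruction noted in the Remark after the Kripke-model definition: we are not extending a valuation given only on atoms, but reading off the truth value of \emph{every} modal formula from the already-total first-order valuation $v$. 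Since $v$ is a function (a set) and the standard translation is primitive recursive, $V$ is $\mathrm{\Delta}^0_1$-definable from $v$ and $ST$, hence exists by $\mathrm{\Delta}^0_1$-comprehension in $\mathrm{RCA}_0$; the translation $ST$ itself exists as a function by the same $\mathrm{I}\mathrm{\Sigma}^0_1$ argument used in Proposition~\ref{uniform}, once the fresh variables for nested boxes are assigned canonically, e.g.\ by box-depth.

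Next I would verify that $M^\star$ is genuinely a Kripke model, i.e.\ that $V$ satisfies conditions $(4)$–$(6)$. Conditions $(4)$ and $(5)$ are immediate from $ST_w(\bot)=\bot$ (with $v(\bot)=0$) and $ST_w(\psi\rightarrow\theta)=ST_w(\psi)\rightarrow ST_w(\theta)$, together with the clauses defining $v$ on the propositional connectives. The essential case is $(6)$: unwinding $ST_w(\Box\psi)=\forall y(r(\overline{w},y)\rightarrow ST_y(\psi))$ and applying the universal-quantifier clause for $v$ gives $V(w,\Box\psi)=1\iff\forall a\in M\,(v(r(\overline{w},\overline{a}))=0\lor v(ST_a(\psi))=1)$, which by the definitions of $R$ and $V$ is precisely $\forall a\in W\,(wRa\rightarrow V(a,\psi)=1)$.

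Once $M^\star$ is established as a Kripke model the two assertions are almost immediate. For $(1)$, $M^\star,w\Vdash\varphi$ means $V(w,\varphi)=1$, which by the definition of $V$ is $v(ST_w(\varphi))=1$, i.e.\ $\mathcal{M}\vDash ST_w(\varphi)$. For $(2)$, I would compute $M^\star\Vdash\varphi\iff\forall w\in W\,(V(w,\varphi)=1)\iff\forall a\in M\,(v(ST_a(\varphi))=1)$, and then invoke the universal-quantifier clause for $v$ once more to rewrite this as $v(\forall x\,ST_x(\varphi))=1$, i.e.\ $\mathcal{M}\vDash\forall x\,ST_x(\varphi)$.

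The main obstacle lies entirely in the $\Box$-case of $(6)$: it rests on the substitution identity $ST_x(\psi)[\overline{a}/x]=ST_a(\psi)$ and on the fresh variables introduced by the $\Box$-clause never coinciding with $x$ nor being captured. I would discharge this by a short induction on $\psi$ (formalizable in $\mathrm{I}\mathrm{\Sigma}^0_1$), using that the substituted term is a \emph{constant} $\overline{a}$, so no variable capture can occur; with this identity in hand the quantifier computation above goes through verbatim, and the two halves of the equivalence in $(6)$ follow.
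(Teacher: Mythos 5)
Your proposal is correct and follows essentially the same route as the paper: take $W^{\star}=M$, $R^{\star}$ the interpretation of $r$, and $V^{\star}(w,\psi)=v(ST_w(\psi))$, then verify the valuation conditions (the $\Box$-case being exactly the quantifier computation the paper displays) and read off (1) and (2). Your additional attention to the existence of $V^{\star}$ by $\mathrm{\Delta}^0_1$-comprehension and to the capture-free substitution identity $ST_x(\psi)[\overline{a}/x]=ST_a(\psi)$ only makes explicit details the paper leaves implicit.
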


\begin{proof}
We define $M^{\star}=(W^{\star}, R^{\star}, V^{\star})$ by the following:
\begin{itemize}
    \item $W^{\star}=M$,
    \item $R^{\star}=r$,
    \item $V^{\star}(w, \varphi)=v(ST_{w}(\varphi))$ for all $w\in W$ and $\varphi\in\mathrm{Fml}$.
\end{itemize}

We can show that $V^{\star}$ is a valuation with $F^{\star}=(W^{\star}, R^{\star})$ by the definition of $v$ and the standard translation. For example, for all $w\in W^{\star}$ and $\Box\varphi\in\mathrm{Fml}$,
\begin{equation*}
\begin{split}
V^{\star}(w, \Box\varphi)=1&\iff v(ST_{w}(\Box\varphi))=1\\&\iff v(\forall u(r(w, u)\rightarrow ST_{u}(\varphi)))=1\\&\iff\forall u\in M(v(r(w, u)\rightarrow ST_{u}(\varphi))=1)\\&\iff\forall u\in M(v(r(w, u))\cdot(1-v(ST_{u}(\varphi)))=0)\\&\iff\forall u\in M(v(r(w, u))=0\lor v(ST_{u}(\varphi))=1)\\&\iff\forall u\in W^{\star}(wRu\rightarrow V^{\star}(u, \varphi)=1).
\end{split}
\end{equation*}

Thus (1) holds by the definition of $V$. (2) also holds from (1).
\end{proof}

\begin{lem}[$\mathrm{RCA}_0$]\label{srightf}
Let $\mathrm{\Sigma}\subseteq\{T, B, 4, D\}$ and $\mathrm{\Gamma}\subseteq\mathrm{Fml}$ be $\mathbf{K}\mathrm{\mathrm{\Sigma}}$-consistent. Then 1 implies 2.
\begin{enumerate}
    \item $\mathrm{Pbl}_{\mathcal{FO}(\mathrm{\Sigma})}(\{\forall xST_{x}(\theta)\mid\theta\in\mathrm{\Gamma}\},\forall xST_{x}(\varphi))$,
    \item $\mathrm{Pbl}_{\mathbf{K}\mathrm{\mathrm{\Sigma}}}(\mathrm{\Gamma}, \varphi)$.
\end{enumerate}
\end{lem}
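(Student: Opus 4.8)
The plan is to argue by contradiction, reducing everything to the weak completeness theorem, which is available in $\mathrm{RCA}_0$ precisely for $\mathrm{\Sigma}\subseteq\{T,B,4,D\}$; this is exactly why the hypothesis restricts $\mathrm{\Sigma}$. Assume (1). By the definition of $\mathrm{Pbl}_{\mathcal{FO}(\mathrm{\Sigma})}$ there is a first-order proof $p$ witnessing it, and since $p$ is a finite object only finitely many premises $\forall xST_x(\theta_1),\dots,\forall xST_x(\theta_k)$ with $\theta_1,\dots,\theta_k\in\mathrm{\Gamma}$ occur in it. Put $\chi\equiv\theta_1\land\dots\land\theta_k\land\neg\varphi$. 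Suppose toward a contradiction that (2) fails; then $\neg\mathrm{Pbl}_{\mathbf{K}\mathrm{\Sigma}}(\mathrm{\Gamma},\varphi)$ gives in particular $\neg\mathrm{Pbl}_{\mathbf{K}\mathrm{\Sigma}}(\emptyset,(\theta_1\land\dots\land\theta_k)\rightarrow\varphi)$, and since the modal base is classical this says exactly that $\chi$ is $\mathbf{K}\mathrm{\Sigma}$-consistent.

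Next I would feed $\chi$ into the weak completeness theorem to obtain a finite frame $F=(W,R)$ appropriate to $\mathrm{\Sigma}$ and a valuation $V$ such that $(F,V)$ is a weak model of $\chi$; recall that in that construction $V(w,\chi)=1$ for \emph{every} $w\in W$, so $V(w,\theta_i)=1$ for all $i\le k$ and $V(w,\varphi)=0$ for all $w$. From $F$ and the atomic part of $V$ I would build a standard first-order model $\mathcal{M}=(M,v)$ by taking $M=W$, interpreting $r$ as $R$, and setting $v(P_n(\overline{w}))=V(w,p_n)$ when $p_n\in sub(\chi)$ and $0$ otherwise; since $M$ is finite this $v$ is defined by a bounded recursion on sentences and exists in $\mathrm{RCA}_0$. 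Because $R$ is reflexive, symmetric, transitive, or serial exactly when the corresponding member of $\{T,B,4,D\}$ lies in $\mathrm{\Sigma}$, the appropriateness of $F$ yields $\mathcal{M}\vDash\mathrm{\Sigma}_{\mathcal{FO}}$.

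Applying Lemma~\ref{frightk} to $\mathcal{M}$ produces a Kripke model $M^{\star}$ on the same frame $(W,R)$ whose valuation agrees with $V$ on $sub(\chi)$: both satisfy the weak-Kripke-model clauses and their atomic values coincide by construction of $\mathcal{M}$, so a short induction on subformulas (bounded, hence within $\mathrm{I}\mathrm{\Sigma}^0_1$) identifies them. Hence $\mathcal{M}\vDash ST_w(\psi)\iff V(w,\psi)=1$ for all $w\in W$ and $\psi\in sub(\chi)$. In particular $\mathcal{M}\vDash\forall xST_x(\theta_i)$ for each $i\le k$, while $\mathcal{M}\not\vDash\forall xST_x(\varphi)$, since $W$ is nonempty and $V(w,\varphi)=0$ everywhere. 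As the only premises occurring in $p$ are the $\forall xST_x(\theta_i)$, and $\mathcal{M}$ models them together with all first-order axioms and $\mathrm{\Sigma}_{\mathcal{FO}}$, the Soundness Theorem for first-order logic (run as a bounded induction along the lines of $p$, with modus ponens and the axioms preserving truth in $\mathcal{M}$) forces $\mathcal{M}\vDash\forall xST_x(\varphi)$, a contradiction. This establishes (2).

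The main obstacle, and the step that pins down the hypothesis $\mathrm{\Sigma}\subseteq\{T,B,4,D\}$, is the passage from the single consistent formula $\chi$ to a first-order model: a naive contrapositive would ask for a model of the \emph{infinite} premise set $\{\forall xST_x(\theta)\mid\theta\in\mathrm{\Gamma}\}$ refuting $\forall xST_x(\varphi)$, which we cannot manufacture in $\mathrm{RCA}_0$. The argument therefore must be localised to the finitely many premises actually appearing in $p$, and it crucially exploits that a weak model of $\chi$ validates $\chi$ — and hence each $\forall xST_x(\theta_i)$ — at \emph{every} world, not merely at one. The remaining bookkeeping (the existence of $\mathcal{M}$ with its full valuation on the finite frame, and the subformula induction) is routine, but must be checked to stay within $\mathrm{I}\mathrm{\Sigma}^0_1$.
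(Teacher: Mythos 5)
Your proof is correct and takes essentially the same route as the paper's: assume the first-order derivation exists, localize it to finitely many premises $\theta_1,\dots,\theta_k\in\mathrm{\Gamma}$, apply the weak completeness theorem (available in $\mathrm{RCA}_0$ precisely for $\mathrm{\Sigma}\subseteq\{T,B,4,D\}$) to the $\mathbf{K}\mathrm{\Sigma}$-consistent formula $\theta_1\land\dots\land\theta_k\land\neg\varphi$, convert the resulting finite weak Kripke model into a standard $\mathcal{L}_{\mathcal{FO}}$-model satisfying $\mathrm{\Sigma}_{\mathcal{FO}}$ and the premises while refuting $\forall xST_x(\varphi)$, and contradict first-order soundness. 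The only difference is that you make explicit two points the paper glosses over — the construction of the standard model from $(F,V)$ via Lemma~\ref{frightk} and a bounded subformula induction, and the fact that the weak model validates the target formula at \emph{every} world — which is a faithful filling-in rather than a new argument.
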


\begin{proof}
Let $\mathrm{\Gamma}^{*}=\{\forall xST_{x}(\theta)\mid\theta\in\mathrm{\Gamma}\}$ and $\varphi^{*}\equiv\forall xST_{x}(\varphi)$. Assume that $\neg\mathrm{Pbl}_{\mathbf{K}\mathrm{\mathrm{\Sigma}}}(\mathrm{\Gamma}, \varphi)$ and $\mathrm{Pbl}_{\mathcal{FO}(\mathrm{\Sigma})}(\mathrm{\Gamma}^{*},\varphi^{*})$ holds. Then there exists a finite set $\mathrm{\Gamma}_0\subseteq\mathrm{\Gamma}$ such that $\mathrm{Pbl}_{\mathcal{FO}(\mathrm{\Sigma})}(\mathrm{\Gamma}^{*}_0,\varphi^{*})$ holds. By the assumption, $\neg\mathrm{Pbl}_{\mathbf{K}\mathrm{\mathrm{\Sigma}}}(\mathrm{\Gamma}_0, \varphi)$ holds. Since $\bigwedge\mathrm{\Gamma}_0\land\neg\varphi$ is $\mathbf{K}\mathrm{\mathrm{\Sigma}}$-consistent, by the Weak Completeness Theorem, there exist a finite frame $F=(W, R)$ which is appropriate to $\mathrm{\Sigma}$ and a valuation $V$ such that $(F, V)$ is a weak model of $\bigwedge\mathrm{\Gamma}_0\land\neg\varphi$. Then we can define a model of $\bigwedge\mathrm{\Gamma}^{*}_0\land\neg\varphi^{*}$ on $\mathcal{L}_{\mathcal{FO}}$ from $(F, V)$. It is a contradiction by Soundness Theorem for first-order logic.
\end{proof}

\begin{thm}[Strong Completeness Theorem, $\mathrm{WKL}_0$]\label{wfsct}
Suppose \\$\mathrm{\Sigma}\subseteq\{T, B, 4, D\}$. Let $\mathrm{\Gamma}$ be a set of fomulas. Let $\varphi\in\mathrm{Fml}$. If $\mathrm{\Gamma}$ is $\mathbf{K}\mathrm{\mathrm{\Sigma}}$-consistent, then $\mathrm{Pbl}_{\mathbf{K}\mathrm{\mathrm{\Sigma}}}(\mathrm{\Gamma}, \varphi)$ holds if and only if $\mathrm{\Gamma}\Vdash_{\mathbb{F}_{\mathrm{\Sigma}}}\varphi$ holds.
\end{thm}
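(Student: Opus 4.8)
The plan is to prove the two directions separately, with only the completeness direction requiring $\mathrm{WKL}_{0}$; the soundness direction is already available in $\mathrm{RCA}_{0}$ and is subsumed here. Throughout I would abbreviate $\Gamma^{*}=\{\forall xST_{x}(\theta)\mid\theta\in\mathrm{\Gamma}\}$ and $\varphi^{*}\equiv\forall xST_{x}(\varphi)$, so that the bridge lemmas \ref{srightf} and \ref{frightk} are available to move between $\mathbf{K}\mathrm{\Sigma}$ and the first-order theory $\Gamma^{*}\cup\mathrm{\Sigma}_{\mathcal{FO}}$.

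For the direction from $\mathrm{Pbl}_{\mathbf{K}\mathrm{\Sigma}}(\mathrm{\Gamma},\varphi)$ to $\mathrm{\Gamma}\Vdash_{\mathbb{F}_{\mathrm{\Sigma}}}\varphi$, I would unwind the definition of $\mathrm{Pbl}$: there are $\psi_{1},\dots,\psi_{n}\in\mathrm{\Gamma}$ with $\mathrm{Pbl}_{\mathbf{K}\mathrm{\Sigma}}(\emptyset,\psi_{1}\land\dots\land\psi_{n}\to\varphi)$. By the modal Soundness Theorem~\ref{sound}, $\Vdash_{\mathbb{F}_{\mathrm{\Sigma}}}(\psi_{1}\land\dots\land\psi_{n}\to\varphi)$. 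Now fix any frame $F$ appropriate to $\mathrm{\Sigma}$ with $F\Vdash\mathrm{\Gamma}$; then $F\Vdash\psi_{i}$ for each $i$, and for every valuation $V$ with $F$ and every world $w$ one has $V(w,\psi_{i})=1$ for all $i$ together with $V(w,\psi_{1}\land\dots\land\psi_{n}\to\varphi)=1$, hence $V(w,\varphi)=1$. Thus $F\Vdash\varphi$, giving $\mathrm{\Gamma}\Vdash_{\mathbb{F}_{\mathrm{\Sigma}}}\varphi$. This argument is purely propositional at the level of a fixed valuation and needs only $\mathrm{RCA}_{0}$.

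For the converse I would argue contrapositively and build a refuting model by a detour through first-order logic. Assuming $\neg\mathrm{Pbl}_{\mathbf{K}\mathrm{\Sigma}}(\mathrm{\Gamma},\varphi)$, Lemma~\ref{srightf} in contrapositive form yields $\neg\mathrm{Pbl}_{\mathcal{FO}(\mathrm{\Sigma})}(\Gamma^{*},\varphi^{*})$. Since $\mathrm{\Sigma}_{\mathcal{FO}}$ is among the axioms of the system $\mathcal{FO}(\mathrm{\Sigma})$, a routine reductio in $\mathrm{RCA}_{0}$ turns this into the consistency of $Y=\Gamma^{*}\cup\mathrm{\Sigma}_{\mathcal{FO}}\cup\{\neg\varphi^{*}\}$. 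Here is where $\mathrm{WKL}_{0}$ enters: by G\"odel's Completeness Theorem, whose model-existence form is equivalent to $\mathrm{WKL}_{0}$ by Theorem~\ref{wgcp}, the consistent theory $Y$ has a standard model $\mathcal{M}=(M,v)$. Feeding $\mathcal{M}$ into Lemma~\ref{frightk} produces a Kripke model $M^{\star}=(W^{\star},R^{\star},V^{\star})$ with $W^{\star}=M$ and $R^{\star}$ the interpretation of $r$ in $\mathcal{M}$. From $\mathcal{M}\vDash\Gamma^{*}$ and clause (2) of Lemma~\ref{frightk} we obtain $M^{\star}\Vdash\theta$ for every $\theta\in\mathrm{\Gamma}$, while $\mathcal{M}\vDash\neg\varphi^{*}$ gives $M^{\star}\not\Vdash\varphi$. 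Finally $\mathcal{M}\vDash\mathrm{\Sigma}_{\mathcal{FO}}$ forces the frame $F^{\star}=(W^{\star},R^{\star})$ to be appropriate to $\mathrm{\Sigma}$, since each axiom $A(\psi)$ literally expresses the relational property attached to $\psi$ (reflexivity for $T$, symmetry for $B$, transitivity for $4$, seriality for $D$), so the verification is immediate in $\mathrm{RCA}_{0}$. Thus $(F^{\star},V^{\star})$ is a model over a frame appropriate to $\mathrm{\Sigma}$ that validates $\mathrm{\Gamma}$ and refutes $\varphi$, witnessing $\neg(\mathrm{\Gamma}\Vdash_{\mathbb{F}_{\mathrm{\Sigma}}}\varphi)$.

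The main obstacle is the single non-$\mathrm{RCA}_{0}$ step, namely extracting the standard model $\mathcal{M}$ from the consistent first-order theory $Y$; this is exactly the $\mathrm{WKL}_{0}$-strength content carried by G\"odel completeness, and the restriction $\mathrm{\Sigma}\subseteq\{T,B,4,D\}$ is inherited from Lemma~\ref{srightf}, which rests on the Weak Completeness Theorem that is available only for these axioms. A secondary point to treat carefully is the bookkeeping that the universal translations $\forall xST_{x}(\theta)$ correspond to truth of $\theta$ at every world of $M^{\star}$, so that $\mathrm{\Gamma}$ is validated globally while $\varphi$ fails at some world; this is precisely what clause (2) of Lemma~\ref{frightk} supplies, so no new induction is needed beyond what is already established there.
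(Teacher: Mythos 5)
Your proposal is correct and follows essentially the same route as the paper's proof: reduce to the completeness direction via the modal Soundness Theorem, translate through Lemma~\ref{srightf} to get $\neg\mathrm{Pbl}_{\mathcal{FO}(\mathrm{\Sigma})}(\mathrm{\Gamma}^{*},\varphi^{*})$, invoke G\"odel's Completeness Theorem (the sole $\mathrm{WKL}_0$ step) to obtain a standard first-order model, and pull it back to a Kripke countermodel via Lemma~\ref{frightk}, with the frame conditions supplied by $\mathrm{\Sigma}_{\mathcal{FO}}$. Your only deviation is cosmetic but slightly tidier: you add $\mathrm{\Sigma}_{\mathcal{FO}}$ explicitly to the theory $Y$ before applying completeness, whereas the paper applies completeness to $\mathrm{\Gamma}^{*}$ alone and then asserts $\mathcal{M}\vDash\mathrm{\Psi}$ for all $\mathrm{\Psi}\in\mathrm{\Sigma}_{\mathcal{FO}}$ on the grounds that these are axioms of the system $\mathcal{FO}(\mathrm{\Sigma})$.
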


\begin{proof}
By Soundness Theorem (for modal logic), it is enough to show that \\$\neg\mathrm{Pbl}_{\mathbf{K}\mathrm{\mathrm{\Sigma}}}(\mathrm{\Gamma}, \varphi)$ implies $\mathrm{\Gamma}\not\Vdash_{\mathbb{F}_{\mathrm{\Sigma}}}\varphi$. Assume that $\neg\mathrm{Pbl}_{\mathbf{K}\mathrm{\mathrm{\Sigma}}}(\mathrm{\Gamma}, \varphi)$. Let $\mathrm{\Gamma}^{*}=\{\forall xST_{x}(\theta)\mid\theta\in\mathrm{\Gamma}\}$ and $\varphi^{*}\equiv\forall xST_{x}(\varphi)$. By Lemma~\ref{srightf}, $\neg\mathrm{Pbl}_{\mathcal{FO}(\mathrm{\Sigma})}(\mathrm{\Gamma}^{*},\varphi^{*})$ holds. Then $\mathrm{\Gamma}^{*}$ is consistent since $\mathrm{\Gamma}$ is $\mathbf{K}\mathrm{\mathrm{\Sigma}}$-consistent. Thus by G\"odel's Completeness Theorem, there exists a standard model $\mathcal{M}=(M, v)$ such that $\mathcal{M}$ is a model of $\mathrm{\Gamma}^{*}\cup\{\neg\varphi^{*}\}$ on $\mathcal{L}_{\mathcal{FO}}$. Therefore by Lemma~\ref{frightk}, there exists a Kripke model $M^{\star}=(F^{\star}, V^{\star})$ such  that $M^{\star}\Vdash\mathrm{\Gamma}\cup\{\neg\varphi\}$. Moreover, for all $\mathrm{\Psi}\in\mathrm{\Sigma}_{\mathcal{FO}}$, $\mathcal{M}\vDash\mathrm{\Psi}$, so $F^{\star}\Vdash\mathrm{\Sigma}$. Thus $\mathrm{\Gamma}\not\Vdash_{\mathbb{F}_{\mathrm{\Sigma}}}\varphi$. 
\end{proof}

\begin{thm}\label{wklwfsct}
The following statements are equivalent over $\mathrm{RCA}_0$:
\begin{enumerate}
    \item $\mathrm{WKL}_0$,
    \item Strong Completeness Theorem.
\end{enumerate}
\end{thm}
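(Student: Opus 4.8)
The plan is to prove $(1)\Rightarrow(2)$ directly and to reduce $(2)\Rightarrow(1)$ to the equivalences already recorded in Theorem~\ref{wgcp}. For $(1)\Rightarrow(2)$ there is nothing to do beyond invoking Theorem~\ref{wfsct}: that theorem is precisely the Strong Completeness Theorem, and its proof is carried out in $\mathrm{WKL}_0$. For $(2)\Rightarrow(1)$, by Theorem~\ref{wgcp} it suffices to derive over $\mathrm{RCA}_0$, from statement $(2)$, the completeness theorem for propositional logic with countably many atoms. I would fix $\mathrm{\Sigma}=\emptyset$ (plain $\mathbf{K}$) throughout, so that every frame is appropriate and the modal Strong Completeness Theorem applies with no frame condition. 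The key observation is that the box-free modal formulas built from the propositional constants $\mathrm{C}=\{\alpha_0,\alpha_1,\dots\}$ are exactly the propositional formulas over countably many atoms, so a propositional theory is just a set $\mathrm{\Gamma}\subseteq\mathrm{Fml}$ consisting of Boolean (box-free) formulas.

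The first step is syntactic. Given a propositionally consistent set $\mathrm{\Gamma}$ of Boolean formulas, I would check that it is $\mathbf{K}\mathrm{\Sigma}$-consistent (with $\mathrm{\Sigma}=\emptyset$). If not, then $\mathrm{Pbl}_{\mathbf{K}\mathrm{\Sigma}}(\emptyset,(\psi_1\wedge\dots\wedge\psi_n)\rightarrow\bot)$ for some $\psi_1,\dots,\psi_n\in\mathrm{\Gamma}$; by the Soundness Theorem (Theorem~\ref{sound}) this implication is valid on every frame, in particular on a single-point frame, where a valuation is nothing but a truth assignment to the constants and a Boolean formula is evaluated purely propositionally. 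Hence $\psi_1\wedge\dots\wedge\psi_n$ is a propositional contradiction, and since finitary propositional completeness (``tautology implies provable'' for a single formula over finitely many atoms) is available in $\mathrm{RCA}_0$, this would make $\mathrm{\Gamma}$ propositionally inconsistent, a contradiction. Thus $\mathrm{\Gamma}$ is $\mathbf{K}\mathrm{\Sigma}$-consistent. Note that a single-point frame is trivially appropriate to $\emptyset$, and that for box-free formulas the valuation clause for $\Box$ never intervenes, so no modal reasoning enters here.

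The second step is semantic. Applying statement $(2)$ with $\varphi\equiv\bot$ to the $\mathbf{K}\mathrm{\Sigma}$-consistent set $\mathrm{\Gamma}$ produces a model $M=(F,V)$, with $F=(W,R)$, such that $M\Vdash\mathrm{\Gamma}$. Fixing any world $w_0\in W$, I form the truth assignment $X=\{n\mid V(w_0,\alpha_n)=1\}$, which exists by $\mathrm{\Delta}^0_1$-comprehension since $V$ is a set. An induction on the length of a Boolean formula $\psi$, formalizable in $\mathrm{I}\mathrm{\Sigma}^0_1$ and using that $V$ already decides every subformula, shows that the propositional value of $\psi$ under $X$ equals $V(w_0,\psi)$. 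Consequently $X$ satisfies every member of $\mathrm{\Gamma}$, so $\mathrm{\Gamma}$ has a propositional model. This establishes the completeness theorem for propositional logic with countably many atoms, and Theorem~\ref{wgcp} then yields $\mathrm{WKL}_0$.

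The one genuinely delicate point, and the step I would treat most carefully, is the passage between syntactic notions in the first step: one must relate $\mathbf{K}\mathrm{\Sigma}$-consistency and propositional consistency for Boolean formulas \emph{without} invoking any completeness beyond the finitary case, which is exactly what keeps the argument free of circularity (we are, after all, trying to prove an infinitary completeness statement). The Soundness Theorem together with the single-point frame supplies the only nontrivial direction, and the choice $\mathrm{\Sigma}=\emptyset$ makes all frame-appropriateness conditions vacuous. By contrast, the extraction of $X$ and the matching induction in the second step are routine in $\mathrm{RCA}_0$ once one notes that $V$ is a genuine set and that the induction in question is $\mathrm{\Sigma}^0_1$.
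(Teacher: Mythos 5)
Your proof is correct and takes essentially the same route as the paper: $(1)\Rightarrow(2)$ is exactly Theorem~\ref{wfsct}, and $(2)\Rightarrow(1)$ reduces statement $(2)$ to the completeness theorem for propositional logic with countably many atoms and then invokes Theorem~\ref{wgcp}. The only difference is that the paper dismisses that reduction as ``clear,'' whereas you supply its details (taking $\mathrm{\Sigma}=\emptyset$, the single-point-frame soundness argument matching $\mathbf{K}$-consistency with propositional consistency for box-free formulas over the constants, and reading off a truth assignment from one world of the Kripke model), all of which goes through in $\mathrm{RCA}_0$ as you claim.
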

\begin{proof}
(1) implies (2) from Theorem~\ref{wfsct}. Clearly, Strong Completeness Theorem implies the completeness theorem for propositional logic with countably many atoms. Thus by Theorem~\ref{wgcp}, we have $\mathrm{WKL}_0$.
\end{proof}

\section{Open questions}

Some questions remain open concernig the weak completeness theorem in second-order arithmetic. In particular, the completeness with respect to the modal logic cube is not yet fully known. We showed Weak Completeness Theorem for the case of $\mathrm{\Sigma}\subseteq\{T, B, 4, D\}$ in $\mathrm{RCA}_0$. However, we have not shown Weak Completeness Theorem for the case of $5\in\mathrm{\Sigma}$:

\begin{que}
Is Weak Completeness Theorem for the case of $5\in\mathrm{\Sigma}$ provable in $\mathrm{RCA}_0$?
\end{que}

In fact, the following weak model $M=(W, \xrightarrow{s}, V)$ is the model for cases $\mathrm{\Sigma}=\{4, 5\}$, $\mathrm{\Sigma}=\{4, 5, D\}$.

\begin{itemize}
    \item $W=\{\alpha\in C_{h+1, n}\mid\neg\mathrm{Pbl}_{\mathbf{K}\mathrm{\mathrm{\Sigma}}}(\alpha,\bot)\land\mathrm{Pbl}_{\mathbf{K}\mathrm{\mathrm{\Sigma}}}(\emptyset, \alpha\rightarrow\varphi)\}$,
    \item $\xrightarrow{s}=\{(\alpha, \beta)\in W\times W\mid\alpha\xrightarrow{m}\beta\land\forall\xi\in C_{h, n}(\mathrm{Pbl}_{\mathbf{K}\mathrm{\mathrm{\Sigma}}}(\emptyset, \alpha\rightarrow\Diamond\xi)\rightarrow\mathrm{Pbl}_{\mathbf{K}\mathrm{\mathrm{\Sigma}}}(\emptyset, \beta\rightarrow\Diamond\xi))\}$,
    \item For all $\psi\in sub(\varphi)$ and $\alpha\in W$, $V(\alpha, \psi)=1\iff\mathrm{Pbl}_{\mathbf{K}\mathrm{\mathrm{\Sigma}}}(\emptyset, \alpha\rightarrow\psi)$.
\end{itemize}

However, it is known that in the case of $5\in\mathrm{\Sigma}$ and $4\not\in\mathrm{\Sigma}$, this model does not work for $h=0$ and $n=1$ by Moss \cite{MR2349876}. We expect that the $\mathrm{\Sigma}=\{5\}$ and $\mathrm{\Sigma}=\{5, D\}$ cases will force us to construct different models.

\begin{que}
Is Weak Completeness Theorem for the case of $\mathrm{\Sigma}=\{5\}$ and $\mathrm{\Sigma}=\{5, D\}$ provable in $\mathrm{RCA}_0$?
\end{que}

The modal logic \textbf{S4.2} is the smallest normal modal logic which contains $T, 4$, and $.2$. \textbf{S4.2} is the important object that appears in the analysis of modal logic using the forcing method.

\begin{que}
Is Weak Completeness Theorem for $\mathbf{S4.2}$ provable in $\mathrm{RCA}_0$?
\end{que}

When weak completeness holds for a formula, which is important in the analysis of modal logic, we expect most of them to be provable in $\mathrm{RCA}_0$.

\section*{Acknowledgements}
Takeda is supported by JST SPRING, Grant Number JPMJSP2114. Yokoyama is partially supported by
 JSPS KAKENHI grant numbers JP21KK0045 and JP23K03193.

\bibliography{refmo}
\end{document}